 \definecolor{darkgreen}{rgb}{0,0.5,0}
\DeclareMathOperator{\tr}{tr}
\newcommand{\real}{{\mathbb{R}}}
\newcommand{\R}{{\mathbb{R}}}
\newcommand{\PP}{{\mathbb{P}}}
\newcommand{\E}{{\mathbb{E}}}
\newcommand{\G}{{\mathcal{G}}}
\newcommand{\D}{{\mathcal{D}}}
\newcommand{\F}{{\mathcal{F}}}
\newtheorem{theorem}{Theorem}[section]
\newtheorem{corollary}{Corollary}[section]
\newtheorem{proposition}{Proposition}[section]
\newtheorem{lemma}{Lemma}[section]
\newtheorem{sublemma}{Sublemma}[section]
\theoremstyle{definition}
\newtheorem{remark}{Remark}[section]
\newtheorem{assumption}{Assumption}[section]
\newcommand{\eps}{\varepsilon}
\newcommand{\argmin}{\operatorname{argmin}}
\newcommand{\longthmtitle}[1]{\mbox{}\textup{\textbf{(#1):}}}
\newcommand{\N}{\mathcal{N}}
\newcommand{\oprocendsymbol}{\hbox{$\diamond$}}
\newcommand{\oprocend}{\relax\ifmmode\else\unskip\hfill\fi\oprocendsymbol}
\begin{document}

\begin{abstract}
Inspired by REINFORCE, we introduce a novel receding-horizon algorithm for the Linear Quadratic Regulator (LQR) problem with unknown dynamics. Unlike prior methods, our algorithm avoids reliance on two-point gradient estimates while maintaining the same order of sample complexity. Furthermore, it eliminates the restrictive requirement of starting with a stable initial policy, broadening its applicability. Beyond these improvements, we introduce a refined analysis of error propagation through the contraction of the Riccati operator under the Riemannian distance. This refinement leads to a better sample complexity and ensures improved convergence guarantees. 

\end{abstract}

\title{Sample Complexity of Linear Quadratic Regulator\\  Without Initial Stability}

\author[Amirreza Neshaei Moghaddam]{Amirreza Neshaei Moghaddam}
\address{Department of Electrical and Computer Engineering\\
University of California at Los Angeles,
Los Angeles}
\email{amirnesha@ucla.edu}
\author[Alex Olshevsky]{Alex Olshevsky}
\address{Department of Electrical and Computer Engineering\\
       Boston University}
\email{alexols@bu.edu}
\author[Bahman Gharesifard]{Bahman Gharesifard}
\address{Department of Electrical and Computer Engineering\\
       University of California, Los Angeles}
\email{gharesifard@ucla.edu}

\maketitle

\section{Introduction}
The Linear Quadratic Regulator (LQR) problem, a cornerstone of optimal control theory, offers an analytically tractable framework for optimal control of linear systems with quadratic costs. Traditional methods rely on complete knowledge of system dynamics, solving the Algebraic Riccati Equation~\cite{DPB:95} to determine optimal control policies. However, recent real-world scenarios often involve incomplete or inaccurate models. Classical methods in control theory, such as identification theory~\cite{MG:06} and adaptive control~\cite{AMA-ALF:21}, were specifically designed to provide guarantees for decision-making in scenarios with unknown parameters. However, the problem of effectively approximating the optimal policy using these methods remains underexplored in the traditional literature. Recent efforts have sought to bridge this gap by analyzing the sample complexity of learning-based approaches to LQR~\cite{SD-HM-NM-BR-ST:20}, providing bounds on control performance relative to the amount of data available.

In contrast, the model-free approach, rooted in reinforcement learning (RL), bypasses the need for explicit dynamics identification, instead focusing on direct policy optimization through cost evaluations. Recent advances leverage stochastic zero-order optimization techniques, including policy gradient methods, to achieve provable convergence to near-optimal solutions despite the inherent non-convexity of the LQR cost landscape. Foundational works, such as~\cite{MF-RG-SK-MM:18}, established the feasibility of such methods, demonstrating convergence to the globally optimal solution. Subsequent efforts, including~\cite{DM-AP-KB-KK-PLB-MJW:20} and~\cite{HM-AZ-MS-MRJ:22}, have refined these techniques, achieving improved sample complexity bounds. Notably, all of these works assume that the initial policy is stabilizing.

A key limitation of these methods, including~\cite{DM-AP-KB-KK-PLB-MJW:20, HM-AZ-MS-MRJ:22}, is the reliance on two-point gradient estimation, which requires evaluating costs for two different policies while maintaining identical initial states. In practice, this assumption is often infeasible, as the initial state is typically chosen randomly and cannot be controlled externally. Our earlier work~\cite{ANM-AO-BG:25} addressed this challenge, establishing the best-known result among methods that assume initial stability without having to rely on two-point estimates. Instead, we proposed a gradient estimation method inspired by REINFORCE~\cite{RJW:92, RSS-DM-SS-YM:99}, that achieves the same convergence rate as the two-point method~\cite{DM-AP-KB-KK-PLB-MJW:20} while using only a single cost evaluation at each step. This approach enhances the practical applicability of model-free methods, setting a new benchmark under the initial stability assumption.

The requirement for an initial stabilizing policy significantly limits the utility of these methods in practice. Finding such a policy can be challenging or infeasible and often relies on identification techniques, which model-free methods are designed to avoid. Without getting technical at this point, it is worth pointing out that this initial stability assumption plays a major role in the construction of the mentioned model-free methods, and cannot be removed easily. For instance, this assumption ensures favorable optimization properties, like coercivity and gradient domination, that simplify convergence analysis. In this sense, removing this assumption while maintaining stability and convergence guarantees is essential to generalize policy gradient methods, a challenge that has remained an active research topic~\cite{XZ-TB:23, JCP-JU-MS:21, AL:20, FZ-XF-KY:21}.

As also pointed out in~\cite{XZ-TB:23}, the $\gamma$-discounted LQR problems studied in~\cite{JCP-JU-MS:21, AL:20, FZ-XF-KY:21} are equivalent to the standard undiscounted LQR with system matrices scaled by $\sqrt{\gamma}$. In~\cite{JCP-JU-MS:21, AL:20, FZ-XF-KY:21}, this scaling results in an enlarged set of stabilizing policies when $\gamma$ is sufficiently small, enabling policy gradient algorithms to start from an arbitrary policy by choosing $\gamma$ appropriately. 
However, in this setting, the discounting factor serves as an additional parameter that can be tuned (e.g., as in~\cite{JCP-JU-MS:21}). 
In contrast, receding-horizon approaches~\cite{XZ-TB:23}, and our work here, consider the undiscounted LQR setting, where no discount factor is introduced, and if one were present, it would be treated as a fixed problem parameter rather than a tunable one. 

This paper builds on the receding-horizon policy gradient framework introduced in~\cite{XZ-TB:23}, a significant step towards eliminating the need for a stabilizing initial policy by recursively updating finite-horizon costs. While the approach proposed in~\cite{XZ-TB:23} marks an important step forward in model-free LQR, we address the reliance on the two-point gradient estimation, a known limitation discussed earlier. 
Building on the gradient estimation approach from our earlier work~\cite{ANM-AO-BG:25}, we adapt the core idea to accommodate the new setup that eliminates the initial stability assumption. Specifically, our modified method retains the same convergence rate while overcoming the restrictive assumptions of two-point estimation. Beyond these modifications, we revisit the convergence analysis in~\cite{XZ-TB:23}, where a calculation mistake obscured the fact that their error accumulation grows exponentially with the horizon length, invalidating their claimed convergence guarantee. Our refined argument, based on a Riemannian distance function~\cite{RB:07}, significantly improves the propagation of errors. This ensures that the accumulated error remains linear in the horizon length, in contrast to the exponential growth in~\cite{XZ-TB:23}. As a result, we achieve a uniform sample complexity bound of $\widetilde{\mathcal{O}}(\eps^{-2})$, independent of problem-specific constants in the exponent, thereby offering a more scalable and robust policy search framework.
 
In summary, the key contributions of this work are:
\begin{itemize}
    \item In addition to correcting the required sample complexity of~\cite{XZ-TB:23}, we develop a new outer loop analysis (under a mild additional assumption) that establishes convergence guarantees for much smaller sample complexity rates than the actual rate in~\cite{XZ-TB:23}.
    \item We propose a more practical gradient estimate that avoids reliance on two-point estimation and its assumption of the ability to select the randomness of system.
\end{itemize}

\subsection{Algorithm and Paper Structure Overview}\label{sub: structure}
The paper is structured into three sections. Section II presents the necessary preliminaries and establishes the notation used throughout the paper. Section III introduces our proposed algorithm, which operates through a hierarchical double-loop structure, an outer loop which provides a surrogate cost function in a receding horizon manner, and an inner loop applying policy gradient method to obtain an estimate of its optimal policy. Section IV delves deeper into the policy gradient method employed in the inner loop, providing rigorous convergence results and theoretical guarantees for this critical component of the algorithm. Section~V includes the sample complexity bounds, and comparisons with the results in the literature. Finally, we provide simulations studies verifying our findings in Section~VI. 

To be more specific, the core idea of the algorithm leverages the observation that, for any error tolerance $\eps$, there exists sufficiently large finite horizon $N$ where the sequence of policies minimizing recursively updated finite-horizon costs can approximate the optimal policy for the infinite-horizon cost within $\eps$ neighborhood. This receding-horizon design is key to eliminating the need for an initial stabilizing policy: because each horizon step solves a finite-horizon problem, all stage costs remain finite even when the initial policy is unstable. This insight motivates the algorithm’s design: a backward outer loop that iteratively constructs surrogate cost functions over a sequence of finite horizons, and a forward inner loop that optimizes these surrogate costs. At each stage $h$, the policies for later steps $h+1$ to $N-1$ are frozen, and the inner loop applies a policy gradient method to optimize only over the policy at stage $h$, yielding an approximately optimal policy $\widetilde{K}_{h}$ for the finite-horizon cost for the interval $[h,N]$. The resulting policy $\widetilde{K}_{h}$ is then incorporated into the cost function of the subsequent step in the outer loop, and the process repeats backward until $h=0$. Finally, note that the inner loop uses a model-free, one-point gradient estimator, which relies solely on observed costs and does not require knowledge of the system dynamics.

The main difficulty in analyzing the proposed algorithm stems from the fact that the approximation errors from the policy gradient method in the inner loop propagate across all steps of the outer loop. To ensure overall convergence, the algorithm imposes a requirement on the accuracy of the policy gradient method in the inner loop. Each policy obtained must be sufficiently close to the optimal policy for the corresponding finite-horizon cost. This guarantees that the final policy at the last step of the outer loop converges to the true optimal policy for the infinite-horizon cost.

\section{Preliminaries}
\label{sec:preliminaries}
In this section, we gather the required notation, closely following the ones in~\cite{XZ-TB:23} which our work builds on. We use $\| X \|$, $\| X \|_F$, $\sigma_{\min} (X)$, and $\kappa_{X}$ to denote the 2-norm, Frobenius norm, minimum singular value, and the condition number of a matrix $X$ respectively. We also use $\rho (X)$ to denote the spectral radius of a square matrix $X$. Moreover, for a positive definite matrix $W$ of appropriate dimensions, we define the $W$-induced norm of a matrix $X$ as
\[
\| X \|_W^2 := \sup_{z \neq 0} \frac{z^\top X^\top W X z}{z^\top W z}.
\]
Furthermore, we denote the Riemannian distance~\cite{RB:07} between two positive definite matrices $U,V \in \R^{n \times n}$ by
\[
\delta(U,V) := \left( \sum_{i=1}^{n} \log^2 \lambda_i(U V^{-1}) \right)^{1/2}.
\]
Now consider the discrete-time linear system
\begin{equation}
x_{t+1} = A x_t + B u_t, \label{eq: LTI_discrete_time}
\end{equation}
where $x_t \in \real^n$ is the system state at time $t \in \{0,1,2,\dotsc\}$, $u_t \in \real^{m}$ is the control input at time $t$, $A \in \real^{n \times n}$ and $B \in \real^{n \times m}$ are the system matrices which are \emph{unknown} to the control designer. Crucially here, the initial state $x_0$ is drawn randomly from a distribution $\D$ and satisfies 
\begin{align}
    \E[x_0] = 0, \quad 
    \E[x_0 x_0^\top] = \Sigma_0, \ \text{and} \ \| x_0 \|^2 \leq C_m \ \text{a.s.} \label{eq: noise_assumptions}
\end{align}
for some positive definite $\Sigma_0 \in \real^{n \times n}$ and scalar $C_m > 0$. This assumption of a random initial state has been widely adopted in the recent LQR literature~\cite{MF-RG-SK-MM:18,DM-AP-KB-KK-PLB-MJW:20,HM-AZ-MS-MRJ:22, CJ-GK-GL:25,BH-RX-HY:21, ML-JQ-WXZ-YW-YK:22,PC-MS-PP:20,JL-JX-ZS:23}.
The objective in the LQR problem is to find the optimal controller that minimizes the following cost
\[
J_{\infty} = \E \left[\sum_{t=0}^{\infty} x_t^\top Q x_t + u_t^\top R u_t \right],
\]
where $Q \in \real^{n \times n}$ and $R \in \real^{m \times m}$ are the symmetric positive definite matrices that parameterize the cost. We require the pair $(A,B)$ to be stabilizable, and since $Q$ is positive definite (which we also show by $Q \succ 0$), the pair $(A,Q^{1/2})$ is observable,
As a result, the unique optimal controller is a linear state-feedback $u^\ast_t = - K^* x_t$ where $K^* \in \real^{m \times n}$ is derived as follows
\begin{equation}
K^* = (R + B^\top P^* B)^{-1} B^\top P^* A, \label{eq: K* explicit formulation}
\end{equation}
and $P^* \in \real^{n \times n}$ denotes the unique positive definite solution to the undiscounted discrete-time algebraic Riccati equation (ARE)~\cite{DPB:95}:
\begin{equation}
    P = A^\top P A - A^\top P B (R + B^\top P B)^{-1} B^\top P A + Q. \label{eq: ARE}
\end{equation}
Moreover, following the notation in~\cite{XZ-TB:23}, we denote the $P^*$-induced norm by $\| X \|_*$. Since the optimal controller is a linear state feedback, we formulate the costs directly in terms of the feedback gain $K$, taking $u_t = -Kx_t$ for all $t \geq 0$. 
Throughout the paper, we use the terms ``policy'' and ``feedback gain'' interchangeably to refer to such matrices $K$.

We now introduce important notation used in the algorithm description and the proof of the main result.
Let \( N \) be the horizon length and \( h \) the initial time step. The true finite-horizon cost \( J_h(K_h) \) of a policy \( K_h \) is defined as
\begin{align}
    J_h(K_h) := \E_{x_0 \sim \mathcal{D}} \Bigg[ \sum_{t=h+1}^{N-1} x_{t-h}^\top \left( Q + (K^*_t)^\top R K^*_t \right) x_{t-h}  + x_0^\top \left( Q + K_h^\top R K_h \right) x_0 + x_{N-h}^\top Q_N x_{N-h} \Bigg], \label{eq: J_h_definition}
\end{align}
where:
\begin{itemize}
    \item \( x_0 \) denotes the initial state \( x_0 \) drawn from distribution \( \mathcal{D} \),
    \item \( Q_N \) is the terminal cost matrix, which can be chosen arbitrarily,
    \item \( K_h \) is the feedback gain applied at step \( h \),
    \item \( K^*_t \) is the feedback gain at step \( t \), to be formally defined via the Riccati difference equation in~\eqref{eq: RDE_K};   
\end{itemize}
Finally, for all \( t \in \{h+1, \dotsc, N-1\} \), the state evolves according to:
    \[
    x_{t-h+1} = (A - BK^*_t) x_{t-h},
    \]
    with
    \[
    x_{1} = (A - BK_h) x_0.
    \]
We also define the surrogate cost
\begin{align}
    \widetilde{J}_h(K_h) := \E_{x_0 \sim \mathcal{D}} \Bigg[ \sum_{t=h+1}^{N-1} x_{t-h}^\top \left( Q + \widetilde{K}_t^\top R \widetilde{K}_t \right) x_{t-h} + x_0^\top \left( Q + K_h^\top R K_h \right) x_0 + x_{N-h}^\top Q_N x_{N-h} \Bigg], \label{eq: J_h_tilde_definition}
\end{align}
where \( \widetilde{K}_t \) is the feedback gain derived at step \( t \) of the [outer loop of the] algorithm, and for all \( t \in \{h+1, \dotsc, N-1\} \), the state evolves as:
    \[
    x_{t-h+1} = (A - B\widetilde{K}_t) x_{t-h},
    \]
    with
    \[
    x_{1} = (A - BK_h) x_0.\]    
The key difference between \( \widetilde{J}_h(K_h) \) and \( J_h(K_h) \) lies in the use of \( \widetilde{K}_t \) versus \( K^*_t \) for \( t \in \{ h+1, \dotsc, N-1 \} \). This distinction implies that \( \widetilde{J}_h(K_h) \) incorporates all errors from earlier steps, precisely the ones at \( \{N-1, \dotsc, h+1\} \), as the procedure is backward.

We now define several functions that facilitate the characterization of our gradient estimate, which uses ideas from our earlier work in~\cite{ANM-AO-BG:25}). To start, we let
\begin{align}
    \widetilde{J}_h (K_h; x_0) &:= \sum_{t=h+1}^{N-1} x_{t-h}^\top \left( Q + \widetilde{K}_t^\top R \widetilde{K}_t \right) x_{t-h} + x_0^\top (Q + K_h^\top R K_h ) x_0 + x_{N-h}^\top Q_N x_{N-h} \cr
    &= x_0^\top (Q + K_h^\top R K_h ) x_0 + x_0^\top (A-BK_h)^\top \widetilde{P}_{h+1} (A-BK_h) x_0, \qquad \label{eq: J_tilde_init2}
\end{align}
so that
\[
\widetilde{J}_h (K_h) = \E_{x_0 \sim \mathcal{D}} \left[\widetilde{J}_h (K_h; x_0)\right].
\]
Using~\eqref{eq: J_tilde_init2}, we can compute the gradient of $\widetilde{J}_h (K_h; x_0)$ with respect to $K_h$ as follows:
\begin{align}
    \nabla \widetilde{J}_h (K_h; x_0) &= \nabla \Bigg( x_0^\top K_h^\top R K_h x_0 + x_0^\top K_h^\top B^\top \widetilde{P}_{h+1} B K_h x_0 - 2 x_0^\top A^\top \widetilde{P}_{h+1} B K_h x_0 \Bigg) \cr
    &= 2 R K_h x_0 x_0^\top + 2 B^\top \widetilde{P}_{h+1} B K_h x_0 x_0^\top - 2 B^\top \widetilde{P}_{h+1} A x_0 x_0^\top \cr
    &= 2 \left( (R + B^\top \widetilde{P}_{h+1} B) K_h - B^\top \widetilde{P}_{h+1} A \right) x_0 x_0^\top,
\end{align}
and thus,
\begin{align}
    \nabla \widetilde{J}_h (K_h) &= \E_{x_0 \sim \mathcal{D}} \left[ \nabla \widetilde{J}_h (K_h; x_0) \right] \cr
    &= 2 \left( (R + B^\top \widetilde{P}_{h+1} B) K_h - B^\top \widetilde{P}_{h+1} A \right) \E_{x_0 \sim \mathcal{D}} \left[ x_0 x_0^\top \right] \cr
    &= 2 \left( (R + B^\top \widetilde{P}_{h+1} B) K_h - B^\top \widetilde{P}_{h+1} A \right) \Sigma_0. \label{eq: actual_gradient}
\end{align}
Moreover, we define
\begin{align}
    Q_h(x_0, u_0) &:= x_0^\top Q x_0 + u_0^\top R u_0 + \sum_{t=h+1}^{N-1} x_{t-h}^\top \left( Q + \widetilde{K}_t^\top R \widetilde{K}_t \right) x_{t-h} + x_{N-h}^\top Q_N x_{N-h} \notag \\
    &= x_0^\top Q x_0 + u_0^\top R u_0 +\widetilde{J}_{h+1} (\widetilde{K}_{h+1}; Ax_0 + Bu_0) \notag \\
    &= x_0^\top Q x_0 + u_0^\top R u_0 + (Ax_0 + Bu_0)^\top \widetilde{P}_{h+1} (Ax_0 + Bu_0), \label{eq: Q_func_def}
\end{align}
so that
\[
\widetilde{J}_h (K_h; x_0) = Q_h(x_0, - K_h x_0),
\]
and
\[
\widetilde{J}_h (K_h) = \E_{x_0 \sim \mathcal{D}} \left[ Q_h(x_0, - K_h x_0) \right].
\]
Having established the cost functions, we now introduce the notation used to describe the policies:
\begin{align}
    K^*_h &:= \argmin_{K_h} J_h(K_h), \cr \widetilde{K}^*_h &:= \argmin_{K_h} \widetilde{J}_h(K_h), \label{eq: K_argmin_def}
\end{align}
where \( K^*_h \) denotes the optimal policy for the true cost \( J_h(K_h) \), and \( \widetilde{K}^*_h \) denotes the optimal policy for the surrogate cost \( \widetilde{J}_h(K_h) \).
Additionally, \( \widetilde{K}_h \) represents an estimate of \( \widetilde{K}^*_h \). It is obtained using a policy gradient method in the inner loop of the algorithm, which is applied at each step \( h \) of the outer loop to minimize the surrogate cost \( \widetilde{J}_h(K_h) \).

We now move on to the recursive equations. First, we have
\begin{align}
    \widetilde{P}_{h} &= (A - B \widetilde{K}_h)^\top \widetilde{P}_{h+1} (A - B \widetilde{K}_h) + \widetilde{K}_h^\top R \widetilde{K}_h + Q, \label{eq: lqr_lyapunov}
\end{align}
where $\widetilde{P}_N = Q_N$. In addition,
\begin{align}
    \widetilde{P}^*_{h} &= (A - B \widetilde{K}^*_h)^\top \widetilde{P}_{h+1} (A - B \widetilde{K}^*_h) + (\widetilde{K}^*_h)^\top R \widetilde{K}^*_h + Q, \label{eq: P_tilde_star_def}
\end{align}
where $\widetilde{K}^*_h$ from \eqref{eq: K_argmin_def} can also be computed from
\begin{equation*}
    \widetilde{K}^*_h = (R + B^\top \widetilde{P}_{h+1} B)^{-1} B^\top \widetilde{P}_{h+1} A.
\end{equation*}
Finally, we have the Riccati difference equation (RDE):
\begin{align}
    P^*_{h} &= (A - B K^*_h)^\top P^*_{h+1} (A - B K^*_h) + (K^*_h)^\top R K^*_h + Q, \label{eq: RDE}
\end{align}
where $P^*_N = Q_N$ and $K^*_h$ from \eqref{eq: K_argmin_def} can also be computed from
\begin{equation}
    K^*_h = (R + B^\top P^*_{h+1} B)^{-1} B^\top P^*_{h+1} A. \label{eq: RDE_K}
\end{equation}
As a result, it is easy to follow that
\begin{align}
    \E_{x_0 \sim \D} \left[ x_0^\top \widetilde{P}_h x_0 \right] &= \widetilde{J}_h (\widetilde{K}_h), \\
    \E_{x_0 \sim \D} \left[ x_0^\top \widetilde{P}^*_h x_0 \right] &= \widetilde{J}_h (\widetilde{K}^*_h), \quad \text{and} \\
    \E_{x_0 \sim \D} \left[ x_0^\top P^*_h x_0 \right] &= J_h (K^*_h).
\end{align}
We also define the Riccati operator
\begin{equation}
    \mathcal{R}(P) := Q + A^\top (P - PB(R + B^\top P B)^{-1} B^\top P) A, \label{eq: def_Riccati_operator}
\end{equation}
so that $\widetilde{P}^*_h$ and $P^*_h$ can also be shown as
\begin{align}
    \widetilde{P}^*_h &= \mathcal{R}(\widetilde{P}_{h+1}) \label{eq: Riccati_operator_on_P_tilde_star} \\
    P^*_h &= \mathcal{R}(P^*_{h+1}), \label{eq: Riccati_operator_on_P_star}
\end{align}
after replacing $\widetilde{K}^*_h$ and $K^*_h$ in~\eqref{eq: P_tilde_star_def} and~\eqref{eq: RDE} respectively.

We now introduce the following mild assumption:
\begin{assumption}
    $A$ in~\eqref{eq: LTI_discrete_time} is non-singular. \label{ass: A_invertible}
\end{assumption}

Under this assumption, we have the following result from~\cite{JS-MC:23} which ensures that the Riccati operator $\mathcal{R}$ is non-expansive under $\delta(\cdot,\cdot)$:

\begin{lemma} \label{lem: delta_Riccati_contraction}
    Consider the operator \( \mathcal{R} \) defined in~\eqref{eq: def_Riccati_operator}. If Assumption~\ref{ass: A_invertible} holds, then for any symmetric positive definite matrices \( X, Y \in \mathbb{R}^{n \times n} \), we have    \begin{equation*}        \delta(\mathcal{R} (X), \mathcal{R} (Y)) \leq \delta (X,Y).
    \end{equation*}
\end{lemma}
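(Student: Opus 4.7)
The plan is to recognize $\mathcal{R}$ as a composition of elementary maps on the cone of positive definite matrices and to argue that each factor is non-expansive under the Riemannian distance $\delta$. As a first step, I would apply the Woodbury matrix identity to rewrite the Riccati operator in~\eqref{eq: def_Riccati_operator} as
\begin{equation*}
\mathcal{R}(P) \;=\; Q + A^\top \bigl(P^{-1} + BR^{-1}B^\top\bigr)^{-1} A.
\end{equation*}
This exhibits $\mathcal{R}$ as a five-step composition on $\PosSym{n}$: (i) inversion $P \mapsto P^{-1}$, (ii) translation by the PSD matrix $BR^{-1}B^\top$, (iii) inversion again, (iv) congruence by the invertible matrix $A$, namely $X \mapsto A^\top X A$, and (v) translation by $Q \succ 0$. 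Assumption~\ref{ass: A_invertible} is used precisely to make step (iv) a well-defined congruence on $\PosSym{n}$.

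Next I would establish that each of these five maps is non-expansive under $\delta$, with (i), (iii), (iv) in fact being isometries. For inversion, since $X^{-1}Y$ is similar to $YX^{-1}$ via conjugation by $X$, we have $\lambda_i(X^{-1}Y) = 1/\lambda_i(XY^{-1})$; squaring the logarithms and summing gives $\delta(X^{-1}, Y^{-1}) = \delta(X,Y)$ directly from the definition. For congruence $X \mapsto A^\top X A$ with $A$ invertible, the standard invariance follows by substitution into the differential form $\tr((X^{-1} dX)^2)$ on each tangent space, which reduces to itself after cyclic permutation of the trace.

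The substantive step is to prove that translation $X \mapsto X + Z$ is a non-expansion when $Z \succeq 0$. I would take the $\delta$-geodesic $\gamma:[0,1]\to\PosSym{n}$ from $X$ to $Y$ and consider the perturbed curve $\tilde\gamma(t) := \gamma(t) + Z$, which joins $X+Z$ to $Y+Z$. Since the tangent norm at $W$ is $\|V\|_W^2 = \tr\bigl((W^{-1}V)^2\bigr)$, the key pointwise comparison to establish is $\tr\bigl(((W+Z)^{-1} V)^2\bigr) \leq \tr\bigl((W^{-1} V)^2\bigr)$ for symmetric $V$. Writing $P' := W^{-1}$ and $Q' := (W+Z)^{-1}$, so that $0 \prec Q' \preceq P'$, I would expand
\begin{equation*}
\tr((P'V)^2) - \tr((Q'V)^2) \;=\; \tr\bigl((P'-Q')V P' V\bigr) + \tr\bigl((P'-Q')V Q' V\bigr),
\end{equation*}
and observe that each of $V P' V$ and $V Q' V$ is positive semidefinite, while $P' - Q' \succeq 0$, so both traces are non-negative. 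Integrating along $\gamma$ then yields $\delta(X+Z, Y+Z) \leq \mathrm{length}(\tilde\gamma) \leq \mathrm{length}(\gamma) = \delta(X,Y)$.

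Chaining the five resulting inequalities delivers $\delta(\mathcal{R}(X), \mathcal{R}(Y)) \leq \delta(X, Y)$. I expect the tangent-norm comparison underlying the translation step to be the only real technical obstacle; the remaining facts are standard congruence and inversion invariances of the affine-invariant metric, and the reformulation via Woodbury is purely algebraic.
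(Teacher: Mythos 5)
Your proof is correct, but it is worth noting that the paper does not actually prove this lemma at all: it imports the statement wholesale from the cited reference (Sun and Cantoni, arXiv:2305.06003), so your argument is a genuinely self-contained alternative rather than a reproduction of anything in the text. Your route is the classical Bougerol-style one: the Woodbury rewriting $\mathcal{R}(P)=Q+A^\top(P^{-1}+BR^{-1}B^\top)^{-1}A$ is valid since $R\succ 0$, inversion and congruence by the invertible $A$ are exact $\delta$-isometries, and your trace identity
\begin{equation*}
\tr((P'V)^2)-\tr((Q'V)^2)=\tr\bigl((P'-Q')VP'V\bigr)+\tr\bigl((P'-Q')VQ'V\bigr)
\end{equation*}
checks out (the cross terms cancel by cyclicity), with both summands nonnegative because $P'-Q'\succeq 0$, $VP'V\succeq 0$, and $VQ'V\succeq 0$; integrating along the geodesic then gives non-expansiveness of $X\mapsto X+Z$ for $Z\succeq 0$. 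The one point you lean on implicitly is that the paper's formula $\delta(U,V)=(\sum_i\log^2\lambda_i(UV^{-1}))^{1/2}$ coincides with the intrinsic geodesic distance of the affine-invariant metric with tangent norm $\|V\|_W=\sqrt{\tr((W^{-1}V)^2)}$ — this is standard (Bhatia, \emph{Positive Definite Matrices}, Ch.~6, which the paper cites) but should be stated if the proof were to be included. What your approach buys over the paper's is transparency: it makes explicit that Assumption~2.1 (invertibility of $A$) is used only to keep the congruence step inside the positive definite cone, and it removes the dependence on an external preprint.
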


Having introduced all the necessary definitions, we now turn our attention to the our loop. 

\section{The Outer Loop (Receding-Horizon Policy Gradient)}
It has been demonstrated that the solution to the RDE~\eqref{eq: RDE} converges monotonically to the stabilizing solution of the ARE~\eqref{eq: ARE} exponentially~\cite{BH-AHS-TK:99}. As a result, $\{K^*_t\}_{t \in \{ N-1, \dotsc, 1, 0\}}$ in~\eqref{eq: RDE_K} also converges monotonically to $K^*$ as $N$ increases. In particular, we recall the following result from~\cite[Theorem~1]{XZ-TB:23}.
\begin{theorem}\label{thm: RDE converges to ARE}
     Let $A^*_K := A - BK^*$, and define
    \begin{equation}
        N_0 = \frac{1}{2} \frac{\log{\left(\frac{\| Q_N - P^* \|_* \kappa_{P^*} \|A^*_K \| \|B\|}{\eps \lambda_{\min} (R)}\right)}}{\log{\left(\frac{1}{\| A^*_K \|_*}\right)}},
    \end{equation}
    where $Q_N \succeq P^*$.
    Then it holds that $\| A^*_K \|_* < 1$ and for all $N \geq N_0$, the control policy $K^*_0$ computed by~\eqref{eq: RDE_K} is stabilizing and satisfies $\| K^*_0 - K^* \| \leq \eps$ for any $\eps>0$.
\end{theorem}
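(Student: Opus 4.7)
The plan is to decompose the argument into two parts: first, to establish exponential convergence of the RDE iterates $P^*_h$ to the ARE solution $P^*$ as $N-h$ grows, and then to translate this into an error bound on $K^*_0$ via the explicit formula $K^*_0 = (R + B^\top P^*_1 B)^{-1} B^\top P^*_1 A$.

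The starting observation is that $\|A^*_K\|_* < 1$. Rewriting the ARE~\eqref{eq: ARE} as the Lyapunov identity
\[
(A^*_K)^\top P^* A^*_K + Q + (K^*)^\top R K^* = P^*,
\]
and using $Q \succ 0$ gives $(A^*_K)^\top P^* A^*_K \prec P^*$, which is exactly $\|A^*_K\|_* < 1$ by definition of the $P^*$-induced norm. Next, since $Q_N \succeq P^*$ and $\mathcal{R}$ is monotone in its argument with fixed point $P^*$, induction on $h$ shows $P^*_h \succeq P^*$. Plugging $K^*$ as a suboptimal choice into the variational characterization $\mathcal{R}(P) = \min_K [(A - BK)^\top P (A - BK) + K^\top R K + Q]$ at $P = P^*_{h+1}$, and comparing with the same identity at $P = P^*$ (where $K^*$ is the minimizer), yields the one-step inequality
\[
P^*_h - P^* = \mathcal{R}(P^*_{h+1}) - \mathcal{R}(P^*) \preceq (A^*_K)^\top (P^*_{h+1} - P^*) A^*_K.
\]
Iterating backward from $P^*_N = Q_N$ gives
\[
P^*_0 - P^* \preceq ((A^*_K)^\top)^N (Q_N - P^*) (A^*_K)^N,
\]
and sub-multiplicativity of $\|\cdot\|_*$ produces a bound of the form $\|P^*_1 - P^*\|_* \leq C_1 \|A^*_K\|_*^{2(N-1)} \|Q_N - P^*\|_*$, where $C_1$ absorbs a factor coming from the asymmetry of the $P^*$-induced norm under transposition.

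For the second part, I would derive an explicit formula for $K^*_0 - K^*$. Differencing the defining relations $(R + B^\top P^*_1 B) K^*_0 = B^\top P^*_1 A$ and $(R + B^\top P^* B) K^* = B^\top P^* A$ and simplifying yields the clean identity
\[
K^*_0 - K^* = (R + B^\top P^*_1 B)^{-1} B^\top (P^*_1 - P^*) A^*_K.
\]
Taking operator norms with $\|(R + B^\top P^*_1 B)^{-1}\| \leq 1/\lambda_{\min}(R)$, and converting between $\|\cdot\|$ and $\|\cdot\|_*$ on symmetric arguments via the elementary bound $\|X\| \leq \kappa_{P^*}^{1/2} \|X\|_*$, then produces
\[
\|K^*_0 - K^*\| \leq \frac{\kappa_{P^*} \|B\| \|A^*_K\|}{\lambda_{\min}(R)} \cdot \|A^*_K\|_*^{2N} \|Q_N - P^*\|_*.
\]
Setting the right-hand side equal to $\eps$ and solving for $N$ gives exactly the stated $N_0$.

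The main obstacle is the careful bookkeeping of the norm conversions: because $\|\cdot\|_*$ is not invariant under transposition for non-symmetric arguments, the iterated product $((A^*_K)^\top)^N X (A^*_K)^N$ picks up an asymmetry factor, and converting between $\|\cdot\|$ and $\|\cdot\|_*$ each introduces a factor of $\kappa_{P^*}^{1/2}$, so matching the precise constant $\kappa_{P^*}$ and exponent $2N$ in the theorem requires grouping these conversions efficiently. A secondary point is verifying that $K^*_0$ is stabilizing for $N \geq N_0$: this follows from the openness of the set of stabilizing feedback gains around $K^*$, so $\rho(A - B K^*_0) < 1$ by continuity whenever $\|K^*_0 - K^*\|$ is small enough, which in particular holds for $\eps$ below a threshold controlled by $(A, B)$.
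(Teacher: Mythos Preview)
Your proposal is correct and follows essentially the same two-part strategy as the paper: establish exponential contraction of $\overline{P}_h := P^*_h - P^*$ toward $0$ in the $P^*$-induced norm, then convert this to a bound on $\|K^*_0 - K^*\|$ via the closed-form difference identity, which you derive exactly as the paper does in its equation~\eqref{eqn:kdiff}.

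There is a mild but noteworthy difference in how each obtains the one-step contraction. The paper computes the difference $\overline{P}_t$ of the two Riccati recursions explicitly, manipulates it into the form $\overline{A}^\top \overline{P}_{t+1}^{1/2}[I+\overline{P}_{t+1}^{1/2}B\overline{R}^{-1}B^\top \overline{P}_{t+1}^{1/2}]^{-1}\overline{P}_{t+1}^{1/2}\overline{A}$, and then invokes an external result (Hassibi--Sayed--Kailath, Theorem~14.4.1) to conclude both $\|\overline{A}\|_*<1$ and $\|\overline{P}_t\|_*\le \|\overline{A}\|_*^2\,\|\overline{P}_{t+1}\|_*$. Your route---reading $\|A^*_K\|_*<1$ directly off the closed-loop Lyapunov equation, and getting $\overline{P}_h \preceq (A^*_K)^\top \overline{P}_{h+1} A^*_K$ by plugging the suboptimal $K^*$ into the variational form $\mathcal{R}(P)=\min_K[(A-BK)^\top P(A-BK)+K^\top RK+Q]$---is more self-contained and arguably more elegant, but lands on the identical L\"owner inequality.

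One comment on your ``main obstacle'': the transposition asymmetry you worry about does not actually introduce an extra constant if you iterate in the right quantity. Working with $\nu(X):=\sup_{z\ne0} z^\top X z / z^\top P^* z$ (a monotone functional on symmetric PSD matrices), the one-step inequality gives $\nu(\overline{P}_h)\le \|A^*_K\|_*^2\,\nu(\overline{P}_{h+1})$ directly, since for $w=A^*_K z$ one has $w^\top \overline{P}_{h+1} w / z^\top P^* z = (w^\top \overline{P}_{h+1} w / w^\top P^* w)(w^\top P^* w / z^\top P^* z)\le \nu(\overline{P}_{h+1})\|A^*_K\|_*^2$. This iterates cleanly with no $C_1$, and a single norm conversion at the end produces precisely the $\kappa_{P^*}$ factor in the statement. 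This is effectively what the cited external theorem delivers in the paper's proof.
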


The proof of Theorem~\ref{thm: RDE converges to ARE} is provided in Appendix~\ref{app: RDE converges to ARE} for completeness (and to account for some minor change in notation). We also note that this theorem relies on a minor inherent assumption that \( Q_N \) satisfies \( Q_N \succeq P^* \). A full discussion of this assumption is provided in Remark~\ref{rem: Q_N inherent_assumption} in Appendix~\ref{app: RDE converges to ARE}. Moreover, observe that since $P^*_N = Q_N \succeq P^*$, the monotonic convergence of~\eqref{eq: RDE} will yield
\[
Q_N = P^*_N \succeq P^*_{N-1} \succeq \cdots \succeq P^*_0 \succeq P^*,
\]
and hence
\[
\max_{t \in \{1,2,\dotsc,N\}} \{ P^*_t \} = P^*_N = Q_N.
\]

Before presenting the formal algorithm, we provide an intuitive overview of its structure in Figure~\ref{fig:algorithm_overview}.

\begin{figure*}[t]
\centering
\resizebox{\textwidth}{!}{
\begin{tikzpicture}[
  >=Stealth,
  node distance=10mm and 8mm,
  box/.style={draw=black, rounded corners, thick, align=center, inner sep=3pt, minimum height=13mm, text width=3.3cm, text=black},
  lab/.style={font=\normalsize, inner sep=1pt, text=black},
  arr/.style={->, thick, draw=black}
]

\node[box, text width=4.2cm] (freeze) {Freeze downstream policies\\[1mm]\footnotesize fix $\widetilde{K}_{h+1},\dotsc, \widetilde{K}_{N-1}$};
\node[box, right=of freeze, text width=4.5cm] (sur) {Build surrogate cost $\widetilde{J}_h(\cdot)$\\[1mm]\footnotesize function of the policy at $h$, with policies at $[h+1,N-1]$ fixed};

\node[box, left= of freeze, text width=1.5cm] (initialize) {\textbf{Initialize}};

\node[box, right=of sur, text width=3.2cm, minimum height=14mm] (innerG) {Estimate gradient\\[1mm]\footnotesize $\widehat{\nabla}\widetilde{J}_h(K_{h,t})$};
\node[box, right=7mm of innerG, text width=4.2cm, minimum height=14mm] (innerU) {Update\\[1mm]\footnotesize $K_{h,t+1}\!\leftarrow\! K_{h,t}-\alpha_{h,t} \widehat{\nabla}\widetilde{J}_h (K_{h,t})$};

\node[box, right=of innerU, text width=2.6cm] (outK) {Output\\[1mm]\footnotesize $\widetilde{K}_h \gets K_{h,T_h}$};

\coordinate (belowInner) at ($(innerU.south)-(0,7mm)$);
\node[
  draw=black, dashed, rounded corners,
  fit=(innerG)(innerU)(belowInner),
  inner ysep=8pt,
  inner xsep=5pt
] (innerbox) {};

\draw[arr] (initialize.east) -- (freeze.west);
\draw[arr] (freeze) -- (sur);
\draw[arr] (sur) -- (innerbox);
\draw[arr] (innerG) -- (innerU);
\draw[arr, bend left=14] (innerU.south) to (innerG.south);

\draw[arr] (innerbox) -- (outK);

\node[lab, above=0.5mm of innerbox.south] {\textbf{inner loop over } $t=0,1,\dots,T_h$};

\coordinate (top) at ($(freeze.north)!0.5!(outK.north)+(0,1.3)$);
\draw[arr, bend right=15] (outK.north) to (freeze.north);
\node[lab, above=12pt] at (top) {\textbf{outer loop (receding horizon) over $h$: $N-1,\dots,1,0$}};

\end{tikzpicture}
}
\caption{A brief overview of the nested-loop structure of Algorithm~\ref{alg:RHPG-RL}.}
\label{fig:algorithm_overview}
\end{figure*}

Figure~\ref{fig:algorithm_overview} highlights the two nested loops: an outer loop stepping backward in $h$ (from $N-1$ to $0$) and an inner loop iteratively refining the policy at each stage. At each $h$, the algorithm (i) freezes downstream policies, (ii) builds the surrogate cost $\widetilde J_h$, (iii) runs an inner loop of gradient-based updates with some step-size $\alpha_{h,t}$ until approximate convergence, and (iv) outputs $\widetilde K_h$, which is incorporated into the surrogate cost for step $h-1$. 

We formalize this procedure in Algorithm~\ref{alg:RHPG-RL} and briefly state what information it uses in practice:

\begin{algorithm}[t]
\caption{Receding-Horizon Policy Gradient}
\label{alg:RHPG-RL}
\begin{algorithmic}[1]
\REQUIRE Horizon $N$, max iterations $\{T_h\}$, stepsizes $\{\alpha_{h,t}\}$, variance parameter $\sigma^2$
\FOR{$h = N-1$ to $0$}
    \STATE Initialize $K_{h, 0}$ arbitrarily (e.g., the convergent policy from the prev. iter. $K_{h+1, T_{h+1}}$ or $0$).
    \FOR{$t = 0$ to $T_h-1$}
        \STATE Sample $\eta_{h,t} \sim \N (0, I_m)$ and simulate a trajectory of the system~\eqref{eq: LTI_discrete_time} with 
        \[
        u_{i} = \begin{cases}
            - K_{h,t} x_{0} + \sigma \eta_{h,t} & \text{: }i=0, \\
            -\widetilde{K}_{h+i} x_i & \text{: } i \geq 1,
        \end{cases}
        \]
        and $x_0 \sim \mathcal{D}$.
        \STATE Compute $Q_h(x_0, u_{0})$ for said trajectory.
        \STATE Compute the gradient estimate
        \[\widehat{\nabla} \widetilde{J}_{h} (K_{h, t}) \leftarrow - \frac{1}{\sigma} Q_h(x_0, u_{h,t}) \eta_{h,t} x_0^\top.\]
        \vspace{-2mm}
        \STATE Update $K_{h, t+1} \leftarrow K_{h, t} - \alpha_{h,t} \widehat{\nabla} \widetilde{J}_h(K_{h, t})$.
    \ENDFOR
    \STATE $\widetilde{K}_h \gets K_{h,T_{h}}$. 
    \STATE Incorporate $\widetilde{K}_h$ into the surrogate cost function for the next step, i.e., $\widetilde{J}_{h-1}(\cdot)$.
\ENDFOR
\RETURN $K_{0, T_{0}}$.
\end{algorithmic}
\end{algorithm}

\textbf{Information available to the algorithm.}
At each stage $h$, the updates require neither knowledge of the system matrices $(A,B)$ nor full trajectory information. Instead, the algorithm draws $\eta \sim \mathcal{N}(0,I)$, applies a perturbed action $u_0 = -K_{h,t}x_0 + \sigma \eta$ where $x_0 \sim \D$, and then rolls out with the frozen downstream policies. It then queries a cost oracle for the finite-horizon cost-to-go $Q_h(x_0,u_0)$ of length $N-h$, which is used to form the gradient estimate for the inner updates. A detailed construction of $Q_h$ is deferred to Remark~\ref{rem: Q-calculation}.

Note that in this section, we focus on the outer loop of Algorithm~\ref{alg:RHPG-RL}, analyzing the requirements it imposes on the convergence of the policy gradient method employed in the inner loop. The details of the policy gradient method will be discussed in the next section.

Before we move on to the next result, we define the following constants:
\begin{align*}
    &a := \frac{\sigma_{\min} (Q)}{2}, \\
    &\varphi := \max_{t \in \{0,1,\dotsc,N-1\}} \| A - B K^*_t \|, \\
    &C_1 := \frac{\varphi \| B \|}{\lambda_{\min} (R)}, \\
    &C_2 := 2 \varphi \|A\| \left( 1 + \frac{\| Q_N + a I \| \|B\|^2}{\lambda_{\min} (R)} \right), \\
    &C_3 := 2 \| R + B^\top (Q_N + aI) B \|.
\end{align*}
Additionally, given a scalar $\eps >0$, we define:
\begin{equation}
\varsigma_{h,\eps} :=
\begin{cases} 
\displaystyle
\min\!\left\{
\begin{array}{l}
    \sqrt{\frac{a}{C_3} \frac{1}{N}},\;
    \sqrt{\frac{a}{C_3} \frac{a}{2 e N \| Q_N \|}}, \\[1mm]
    \sqrt{\frac{a}{C_3} \frac{\eps}{8 e N C_1 \| Q_N \| }},\;
    \sqrt{\frac{\eps}{4 C_1 C_3}}
\end{array}
\right\} & h \geq 1, \\[2mm]
\displaystyle \frac{\eps}{4} & h = 0.
\end{cases}
\label{eq: def_varsigma_h}
\end{equation}

We now present a key result, Theorem~\ref{thm: LQR_DP_by_delta}, on the accumulation of errors that constitutes an improvement over~\cite[Theorem 2]{XZ-TB:23} (corrected version of which is stated as Theorem~\ref{thm: LQR_DP} below); as the proof of Theorem~\ref{thm: LQR_DP_by_delta} demonstrates, this improvement relies on a fundamentally different analysis.

\begin{theorem}\longthmtitle{Main result: outer loop} \label{thm: LQR_DP_by_delta}
    Select 
    \begin{equation}
    N = \frac{1}{2} \cdot \frac{\log\left(\frac{2\|Q_N - P^*\|_*\cdot \kappa_{P^*} \cdot \|A_K^*\|\cdot \|B\|}{\eps \cdot \lambda_{\min}(R)}\right)}{\log\left(\frac{1}{\|A_K^*\|_*}\right)} + 1, \label{eq: N_value_choice}
    \end{equation}
    where 
    $Q_N \succeq P^*$,
    and suppose that Assumption~\ref{ass: A_invertible} holds. Now assume that, for some \(\eps > 0\), there exists a sequence of policies \(\{\widetilde{K}_h\}_{h \in \{N-1, \dotsc, 0\}}\) such that for all \( h \in \{N-1, \dotsc, 0\} \),
    \[
    \| \widetilde{K}_h - \widetilde{K}^*_h \| \leq \varsigma_{h,\eps},
    \]
    where \(\widetilde{K}^*_h\) is the optimal policy for the Linear Quadratic Regulator (LQR) problem from step \(h\) to \(N\), incorporating errors from previous iterations of Algorithm~\ref{alg:RHPG-RL}.
    Then, the proposed algorithm outputs a control policy $\widetilde{K}_0$ that satisfies $\| \widetilde{K}_0 - K^* \| \leq \eps$. Moreover, for a sufficiently small $\eps$ such that $\eps < \frac{1 - \|A - BK^* \|_*}{\kappa_{P^*}^{1/2} \ \|B\|}$, $\widetilde{K}_0$ is stabilizing.
\end{theorem}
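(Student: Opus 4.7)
The plan is to decompose $\|\widetilde{K}_0 - K^*\|$ by the triangle inequality as
\[
\|\widetilde{K}_0 - K^*\| \;\leq\; \|\widetilde{K}_0 - \widetilde{K}^*_0\| \;+\; \|\widetilde{K}^*_0 - K^*_0\| \;+\; \|K^*_0 - K^*\|,
\]
and to control each of the three pieces separately, targeting $\eps/4$, $\eps/4$ and $\eps/2$ respectively. The third term is immediate: the choice of $N$ in~\eqref{eq: N_value_choice} is exactly the threshold provided by Theorem~\ref{thm: RDE converges to ARE} with error tolerance $\eps/2$ (the factor of $2$ in the logarithm corresponds to halving $\eps$), so $\|K^*_0 - K^*\| \leq \eps/2$. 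The first term is immediate from the hypothesis: $\|\widetilde{K}_0 - \widetilde{K}^*_0\| \leq \varsigma_{0,\eps} = \eps/4$ by~\eqref{eq: def_varsigma_h}. All the real work goes into the middle term $\|\widetilde{K}^*_0 - K^*_0\|$, which encodes how the per-step policy errors $\{\widetilde{K}_h - \widetilde{K}^*_h\}_{h=1}^{N-1}$ propagate backward through the outer loop.

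For the middle term, both $\widetilde{K}^*_0$ and $K^*_0$ admit the explicit Riccati-type formula in terms of $\widetilde{P}_1$ and $P^*_1$ respectively, so a standard matrix-perturbation calculation (using the constants $C_1, C_2, C_3$ to bound the Riccati map's sensitivity) reduces the task to controlling $\|\widetilde{P}_1 - P^*_1\|$. More generally I would control the accumulated $\widetilde{P}_h$ versus $P^*_h$ discrepancy for every $h$.

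The central idea, and the reason for passing to the Riemannian distance, is that in the operator norm the perturbation of the Riccati iteration can amplify by constants depending on closed-loop spectra, producing a factor that compounds exponentially in $N$. Instead I would use that $\widetilde{P}^*_h = \mathcal{R}(\widetilde{P}_{h+1})$ and $P^*_h = \mathcal{R}(P^*_{h+1})$, and apply Lemma~\ref{lem: delta_Riccati_contraction}:
\[
\delta(\widetilde{P}_h, P^*_h) \;\leq\; \delta(\widetilde{P}_h, \widetilde{P}^*_h) + \delta\bigl(\mathcal{R}(\widetilde{P}_{h+1}), \mathcal{R}(P^*_{h+1})\bigr) \;\leq\; \delta(\widetilde{P}_h, \widetilde{P}^*_h) + \delta(\widetilde{P}_{h+1}, P^*_{h+1}).
\]
Unrolling backward from the boundary $\widetilde{P}_N = P^*_N = Q_N$ yields
\[
\delta(\widetilde{P}_0, P^*_0) \;\leq\; \sum_{h=0}^{N-1} \delta(\widetilde{P}_h, \widetilde{P}^*_h),
\]
which is linear in $N$ rather than exponential. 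Each summand is then estimated by comparing the Lyapunov-style identities~\eqref{eq: lqr_lyapunov} and~\eqref{eq: P_tilde_star_def}: a perturbation of $\widetilde{K}_h$ by $\varsigma_{h,\eps}$ produces a controlled change in the associated positive definite matrix, and the specific thresholds in~\eqref{eq: def_varsigma_h} are designed precisely so that (i) $\widetilde{P}_h$ stays inside the region $\widetilde{P}_h \preceq Q_N + aI$ needed to apply the sensitivity bounds uniformly, and (ii) each $\delta(\widetilde{P}_h, \widetilde{P}^*_h)$ contributes at most $O(\eps/N)$ to the sum. Converting the resulting $\delta(\widetilde{P}_0, P^*_0) = O(\eps)$ back to an operator-norm bound on $\|\widetilde{P}_1 - P^*_1\|$ (using the uniform spectral bounds on both sequences) yields $\|\widetilde{K}^*_0 - K^*_0\| \leq \eps/4$, completing the claim $\|\widetilde{K}_0 - K^*\| \leq \eps$.

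For the stabilization claim, once $\|\widetilde{K}_0 - K^*\| \leq \eps$ one writes
\[
\|A - B\widetilde{K}_0\|_* \;\leq\; \|A - BK^*\|_* + \|B(K^* - \widetilde{K}_0)\|_* \;\leq\; \|A^*_K\|_* + \kappa_{P^*}^{1/2}\|B\|\,\eps,
\]
where the inequality $\|X\|_* \leq \kappa_{P^*}^{1/2}\|X\|$ follows from the definition of the $P^*$-induced norm. The hypothesis $\eps < (1 - \|A - BK^*\|_*)/(\kappa_{P^*}^{1/2}\|B\|)$ then forces $\|A - B\widetilde{K}_0\|_* < 1$, hence $\rho(A - B\widetilde{K}_0) < 1$. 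The main obstacle throughout is the bookkeeping in the third paragraph: converting the per-step policy error $\varsigma_{h,\eps}$ into the right Riemannian-distance bound requires maintaining uniform upper and lower bounds on $\widetilde{P}_h$ and $\widetilde{P}^*_h$ (so that the $\log \lambda_i$ terms in $\delta$ can be linearized), and these upper/lower bounds must themselves be stable under the very errors we are trying to bound, which is exactly why the four separate terms in $\varsigma_{h,\eps}$ appear in~\eqref{eq: def_varsigma_h}.
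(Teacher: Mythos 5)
Your proposal is correct and follows essentially the same route as the paper's proof: the same three-term triangle-inequality decomposition with targets $\eps/4+\eps/4+\eps/2$, the same reduction of the middle term to $\|P^*_1-\widetilde{P}_1\|$ via the explicit Riccati-gain formula, the same telescoping of the Riemannian distance using the non-expansiveness of $\mathcal{R}$ (Lemma~\ref{lem: delta_Riccati_contraction}) to obtain a linear-in-$N$ accumulation, the same two-sided conversion between $\delta(\cdot,\cdot)$ and the operator norm under uniform spectral bounds maintained inductively, and the identical $P^*$-induced-norm argument for stability. The only detail left implicit is that the per-step gap $\widetilde{P}^*_t-\widetilde{P}_t$ is exactly $-(\widetilde{K}^*_t-\widetilde{K}_t)^\top(R+B^\top\widetilde{P}_{t+1}B)(\widetilde{K}^*_t-\widetilde{K}_t)$ by completing the square, which is why each summand is \emph{quadratic} in $\varsigma_{h,\eps}$ and why the thresholds in~\eqref{eq: def_varsigma_h} carry square roots.
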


The proof of Theorem~\ref{thm: LQR_DP_by_delta} is presented in Appendix~\ref{app: thm_LQR_DP_by_Delta}. A key component of our analysis is the contraction of the Riccati operator under the Riemannian distance, as established in Lemma~\ref{lem: delta_Riccati_contraction}. This allows us to demonstrate that the accumulated error remains linear in $N$, in contrast to the exponential growth in~\cite[Theorem 2]{XZ-TB:23}.

Given this discrepancy, we revisit~\cite[Theorem 2]{XZ-TB:23} and present a revised version which accounts for some necessary, and non-trivial, modifications to make the statement accurate. For the latter reason, and the fact that this result \emph{does not} rely on Assumption~\ref{ass: A_invertible}, we provide a complete proof in Appendix~\ref{app: thm_LQR_DP}. 
\begin{theorem}\longthmtitle{Prior result: outer loop} \label{thm: LQR_DP}
Choose
\begin{equation}
N = \frac{1}{2} \cdot \frac{\log\left(\frac{2\|Q_N - P^*\|_*\cdot \kappa_{P^*} \cdot \|A_K^*\|\cdot \|B\|}{\eps \cdot \lambda_{\min}(R)}\right)}{\log\left(\frac{1}{\|A_K^*\|_*}\right)} + 1, \label{eq: N value choice}
\end{equation}
where $Q_N \succeq P^*$. Now assume that, for some \(\eps > 0\), there exists a sequence of policies \(\{\widetilde{K}_h\}_{h \in \{N-1, \dotsc, 0\}}\) such that
\begin{equation}
\| \widetilde{K}_h - \widetilde{K}^*_h \| \leq
\begin{cases}
    \min\left\{
    \begin{array}{l}
        \sqrt{\frac{a}{C_3}},
        \sqrt{\frac{a}{C_2^{h-2}C_3}}, \\
        \frac{1}{2} \sqrt{\frac{\eps}{C_1C_2^{h-2}C_3}}
    \end{array} \right\} & h \geq 2, \\[8pt]
    \min\left\{
    \begin{array}{l}
        \sqrt{\frac{a}{C_3}},
        \frac{1}{2} \sqrt{\frac{\eps}{C_1C_3}}
    \end{array} \right\} & h = 1, \\[8pt]
    \frac{\eps}{4} & h = 0,
\end{cases}
\label{eq: refined_ineq_h_case}
\end{equation}
where \(\widetilde{K}^*_h\) is the optimal policy for the Linear Quadratic Regulator (LQR) problem from step \(h\) to \(N\), incorporating errors from previous iterations of Algorithm~\ref{alg:RHPG-RL}. Then, the RHPG algorithm outputs a control policy $\widetilde{K}_0$ that satisfies $\| \widetilde{K}_0 - K^* \| \leq \eps$. 
\end{theorem}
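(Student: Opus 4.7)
My plan is to decompose
\[
\|\widetilde{K}_0 - K^*\| \;\le\; \|\widetilde{K}_0 - K^*_0\| + \|K^*_0 - K^*\|
\]
and bound each of the two terms by $\eps/2$. The second term is precisely what Theorem~\ref{thm: RDE converges to ARE} guarantees: the horizon length $N$ in~\eqref{eq: N value choice} is chosen so that $\|K^*_0 - K^*\| \le \eps/2$, the extra factor of $2$ inside the logarithm (relative to Theorem~\ref{thm: RDE converges to ARE}) corresponding to this halved target. The first term is controlled by a backward induction on $h$ that propagates the inner-loop tolerances $\varsigma_h := \|\widetilde{K}_h - \widetilde{K}^*_h\|$ through the outer-loop recursion~\eqref{eq: lqr_lyapunov}.

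The induction maintains the invariant $\widetilde{P}_h \preceq Q_N + aI$ --- this is precisely what motivates the definition $C_3 = 2\|R + B^\top(Q_N + aI) B\|$ and what the first term $\sqrt{a/C_3}$ in each case of~\eqref{eq: refined_ineq_h_case} enforces. At step $h$, I would split
\[
\widetilde{P}_h - P^*_h \;=\; \bigl(\widetilde{P}_h - \widetilde{P}^*_h\bigr) + \bigl(\widetilde{P}^*_h - P^*_h\bigr),\qquad \widetilde{P}^*_h := \mathcal{R}(\widetilde{P}_{h+1}).
\]
Because $\widetilde{K}^*_h$ is the minimizer of the surrogate Lyapunov expression given $\widetilde{P}_{h+1}$, the first-order variation at $\widetilde{K}^*_h$ vanishes, giving the \emph{quadratic} identity
\[
\widetilde{P}_h - \widetilde{P}^*_h \;=\; (\widetilde{K}_h - \widetilde{K}^*_h)^\top \bigl(R + B^\top \widetilde{P}_{h+1} B\bigr)(\widetilde{K}_h - \widetilde{K}^*_h),
\]
which under the invariant is bounded in norm by $\tfrac12 C_3\, \varsigma_h^2$. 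The second piece is handled by the operator-norm Lipschitz behavior of $\mathcal{R}$ on matrices bounded by $Q_N + aI$; a direct computation --- not invoking Lemma~\ref{lem: delta_Riccati_contraction}, which is precisely why Assumption~\ref{ass: A_invertible} can be dispensed with here --- yields a Lipschitz constant of order $C_2$, giving
\[
\|\widetilde{P}_h - P^*_h\| \;\le\; C_2\,\|\widetilde{P}_{h+1} - P^*_{h+1}\| + \tfrac12\, C_3\, \varsigma_h^2.
\]
Unrolling this inequality from $h = N-1$ down to $h = 1$ produces a geometric sum with ratio $C_2$, which yields the characteristic $C_2^{h-2}$ factor appearing in the denominators of~\eqref{eq: refined_ineq_h_case}.

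At $h = 0$ no further $P$-propagation is needed: I only convert the resulting bound on $\|\widetilde{P}_1 - P^*_1\|$, together with the direct tolerance $\|\widetilde{K}_0 - \widetilde{K}^*_0\| \le \eps/4$, into $\|\widetilde{K}_0 - K^*_0\| \le \eps/2$ via a perturbation of the shared closed-form $(R + B^\top P B)^{-1} B^\top P A$ in $P$; this is where $C_1$ enters. The main obstacle I expect is the simultaneous calibration needed to ensure (i) the invariant $\widetilde{P}_h \preceq Q_N + aI$ persists across the whole backward induction (keeping the Lipschitz constant $C_2$ meaningful), (ii) each policy-level tolerance $\varsigma_h$ translates into a sufficiently sharp bound on $\|\widetilde{P}_h - \widetilde{P}^*_h\|$, and (iii) the three terms inside the min of~\eqref{eq: refined_ineq_h_case} each enforce exactly one of these roles. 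Tracking the exponent $C_2^{h-2}$ correctly --- rather than the incorrect constant that the calculation in~\cite{XZ-TB:23} produces --- is what makes the corrected statement above valid.
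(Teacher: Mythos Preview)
Your proposal is correct and follows essentially the same route as the paper. The paper's own proof of Theorem~\ref{thm: LQR_DP} (Appendix~\ref{app: thm_LQR_DP}) is deliberately terse: it simply refers the reader to the bookkeeping in~\cite[App.~C]{XZ-TB:23}, applies Theorem~\ref{thm: RDE converges to ARE} to get $\|K^*_0 - K^*\|\le \eps/2$, and then points out that the correctly unrolled backward recursion produces the $C_2^{\,N-2}$ factor in the last-stage tolerance (rather than the system-independent exponent claimed in~\cite{XZ-TB:23}). Your outline---triangle-inequality split, the quadratic identity $\widetilde P_h - \widetilde P^*_h = (\widetilde K_h - \widetilde K^*_h)^\top(R+B^\top\widetilde P_{h+1}B)(\widetilde K_h - \widetilde K^*_h)$, an operator-norm Lipschitz bound on $\mathcal{R}$ giving the $C_2$-recursion (no Assumption~\ref{ass: A_invertible} needed), and the final $P\mapsto K$ perturbation step contributing $C_1$---is exactly that bookkeeping made explicit.

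One small refinement: the invariant actually carried through the backward induction is the two-sided bound $\|\widetilde P_h - P^*_h\|\le a$ (from which your one-sided $\widetilde P_h \preceq Q_N + aI$ follows via $P^*_h \preceq Q_N$), and it is the second entry $\sqrt{a/(C_2^{h-2}C_3)}$ in the $h\ge 2$ min that keeps the \emph{unrolled} error below $a$ when $C_2>1$; the first entry $\sqrt{a/C_3}$ only controls the local increment $\|\widetilde P_h-\widetilde P^*_h\|$ and becomes redundant once $h>2$ in that regime.
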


As previously mentioned, Theorem~\ref{thm: LQR_DP_by_delta} significantly improves error accumulation, resulting in much less restrictive requirements than Theorem~\ref{thm: LQR_DP}. The limitations of Theorem~\ref{thm: LQR_DP} stem from the exponent of the constant $C_2$ in~\eqref{eq: refined_ineq_h_case}, which is discussed in detail in Appendix~\ref{app: thm_LQR_DP}. It is worth re-iterating that this improvement comes only at the cost of Assumption~\ref{ass: A_invertible}, a rather mild structural requirement. 

\section{The Inner Loop and Policy Gradient}\label{sec: inner loop}
In this section, we focus on the inner loop of Algorithm~\ref{alg:RHPG-RL}, on which we will implement our proposed policy gradient method. 

We seek a way to estimate the gradient of the cost function $\widetilde{J}_h (K)$ with respect to $K$. To remedy, we propose:
\begin{align}
    \widehat{\nabla} \widetilde{J}_h (K) := - \frac{1}{\sigma^2} Q_h(x_0, u_0) (u_0 + K x_0) x_0^\top, \label{eq: grad_est_prelim}
\end{align}
where $x_0$ is sampled from $\mathcal{D}$, and then $u_0$ is chosen randomly from the Gaussian distribution $\mathcal{N} (- K x_0, \sigma^2 I_m)$ for some $\sigma > 0$. Moreover, we  rewrite $u_0 \sim \mathcal{N} (- K x_0, \sigma^2 I_m)$ as
\begin{equation}
    u_0 = - K x_0 + \sigma \eta, \label{eq: random_input_rewrite}
\end{equation}
where $\eta \sim \N (0, I_m)$. Substituting \eqref{eq: random_input_rewrite} in \eqref{eq: grad_est_prelim} yields
\begin{align}
    \widehat{\nabla} \widetilde{J}_h (K) = - \frac{1}{\sigma} Q_h(x_0, - K x_0 + \sigma \eta) \eta x_0^\top. \label{eq: grad_est_practical_form}
\end{align}
This expression corresponds to the gradient estimate utilized in Algorithm~\ref{alg:RHPG-RL}, as described in its formulation.

We now provide the following remark on the computation of $Q_h (x_0, u_0)$ for this gradient estimate:
\begin{remark} \label{rem: Q-calculation}
    For any fixed $h$, the $Q$-function in~\eqref{eq: grad_est_practical_form} represents the finite-horizon cost-to-go of horizon length $N-h$, i.e., $N-h$ stage costs followed by a terminal cost (indices in the notation can be perceived as shifts). To clarify, let us use the following notation
    \begin{align*}
    u_t &:= \begin{cases}
        -K x_0 + \sigma \eta, & \text{: } t = 0, \\
        -\widetilde{K}_{h+t} \ x_t, & \text{: } t\geq 1,
    \end{cases}
    \end{align*}
    where $(x_t,u_t)$ form the trajectory of the system, i.e., the dynamics follow
    \[
    x_{t+1} = A x_t + B u_t,
    \]
    and $x_0 \sim \D$. Now using the quadratic stage cost 
    \[
    c^{(h)}_t := \begin{cases} x_t^\top Q_N x_t & \text{: } t = N-h, \\
    x_t^\top Q x_t + u_t^\top R u_t & \text{: otherwise},
    \end{cases}
    \]
    we can write
    \[
    Q_h (x_0, u_0) = \sum_{t=0}^{N-h} c^{(h)}_t.
    \]
\end{remark}

\textbf{Roadmap of the section.}
The core of our analysis in Section~\ref{sec: inner loop} relies on three key results: 
\textbf{(i)} unbiasedness of the gradient estimator (Proposition~\ref{prop: gradient estimate expectation}), 
\textbf{(ii)} a bound on the second-moment of the Frobenius norm of the estimate (Lemma~\ref{lem: grad_est_bounds}), and 
\textbf{(iii)} regularity properties of the stage cost $\widetilde{J}_h$ (Lemma~\ref{lem: cost function properties}). 
Together, these yield a one-step descent recursion (Lemma~\ref{lem: policy gradient recursive}), 
which leads to a probabilistic guarantee on the behavior of the optimality-gap (Proposition~\ref{prop: lqr_policy}) 
and culminates in the main inner-loop convergence theorem (Theorem~\ref{thm: policy_gradient}). 
Figure~\ref{fig:sec4_roadmap} summarizes these logical dependencies, with the three bolded results serving as the main pillars of the argument.

\begin{figure}[t]
\centering
\begin{tikzpicture}[
  node distance=7mm and 12mm,
  >=Stealth,
  box/.style={draw=black, rounded corners, align=center, font=\small, inner sep=3pt, text width=5.3cm, text=black},
  emph/.style={draw=black, rounded corners, align=center, font=\small\bfseries, inner sep=3pt, text width=5.5cm, text=black},
  arr/.style={->, thick, draw=black}
]

\node[emph, text width=3.6cm] (unb) {Proposition~\ref{prop: gradient estimate expectation}\\(unbiasedness of {\boldmath$\widehat{\nabla}\widetilde J_h$})};
\node[emph, below=of unb, text width=3.6cm] (var) {Lemma~\ref{lem: grad_est_bounds}\\(second-moment bound of {\boldmath$\widehat{\nabla}\widetilde J_h$})};
\node[emph, below=of var, text width=3.6cm] (geom) {Lemma~\ref{lem: cost function properties}\\(regularity properties of {\boldmath$\widetilde{J}_h$})};

\node[box, right=8mm of var, text width=2.5cm] (rec) {Lemma~\ref{lem: policy gradient recursive}\\(one-step descent recursion)};

\node[box, right=8mm of rec, text width=3.5cm] (finite) {Proposition~\ref{prop: lqr_policy}\\(probabilistic optimality-gap guarantee)};

\node[box, right=8mm of finite, text width=3.5cm] (main) {Theorem~\ref{thm: policy_gradient}\\(main inner-loop convergence guarantee)};

\draw[arr] (unb.east) -- (rec.west);
\draw[arr] (var.east) -- (rec.west);
\draw[arr] (geom.east) -- (rec.west);

\draw[arr] (rec.east) -- (finite.west);
\draw[arr] (finite.east) -- (main.west);

\end{tikzpicture}
\caption{Roadmap of technical results in Section~\ref{sec: inner loop}.}
\label{fig:sec4_roadmap}
\end{figure}

We start by showing that the estimator is unbiased with respect to true gradient:

\begin{proposition}
    \label{prop: gradient estimate expectation}
    Suppose $x_0 \sim \D$ and $u_{0}$ is drawn from $\N (- K x_0, \sigma^2 I_m)$ as before. Then for any given choice of $K$, we have that
    \begin{equation}
        \E [\widehat{\nabla} \widetilde{J}_h (K)] = \nabla \widetilde{J}_h (K).
    \end{equation}
\end{proposition}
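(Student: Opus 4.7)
The plan is to compute the expectation in two stages: first condition on $x_0 \sim \mathcal{D}$ and evaluate the conditional expectation over $\eta \sim \mathcal{N}(0, I_m)$, then take the outer expectation over $x_0$. The starting point is the explicit quadratic expression for $Q_h$ from~\eqref{eq: Q_func_def}, namely $Q_h(x_0, u_0) = x_0^\top Q x_0 + u_0^\top R u_0 + (A x_0 + B u_0)^\top \widetilde{P}_{h+1} (A x_0 + B u_0)$. Substituting $u_0 = -K x_0 + \sigma \eta$ and collecting terms by degree in $\eta$, one obtains a decomposition
\[
Q_h(x_0, -K x_0 + \sigma \eta) = c(x_0) + \sigma\, \ell(x_0)^\top \eta + \sigma^2\, \eta^\top H \eta,
\]
where $c(x_0)$ is deterministic in $x_0$, $H := R + B^\top \widetilde{P}_{h+1} B$, and a direct calculation yields $\ell(x_0) = -2\bigl((R + B^\top \widetilde{P}_{h+1} B) K - B^\top \widetilde{P}_{h+1} A\bigr) x_0 \in \R^m$.

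Plugging this decomposition into $\widehat{\nabla}\widetilde{J}_h (K) = -\tfrac{1}{\sigma} Q_h(x_0, u_0)\, \eta\, x_0^\top$, I would handle the three contributions separately. The constant-in-$\eta$ piece produces $-\tfrac{1}{\sigma} c(x_0)\, \E[\eta]\, x_0^\top = 0$ since $\E[\eta] = 0$. The linear-in-$\eta$ piece produces $-\E\bigl[(\ell(x_0)^\top \eta)\, \eta\bigr] x_0^\top = -\E[\eta \eta^\top]\, \ell(x_0)\, x_0^\top = -\ell(x_0)\, x_0^\top$, using $\E[\eta \eta^\top] = I_m$; substituting the explicit formula for $\ell(x_0)$ yields exactly $2\bigl((R + B^\top \widetilde{P}_{h+1} B) K - B^\top \widetilde{P}_{h+1} A\bigr) x_0 x_0^\top$. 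The quadratic-in-$\eta$ piece yields $-\sigma\, \E[(\eta^\top H \eta)\, \eta]\, x_0^\top$, which vanishes because $\E[(\eta^\top H \eta)\, \eta]$ is an odd-degree Gaussian moment of a symmetric distribution.

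Taking the outer expectation over $x_0 \sim \mathcal{D}$ and applying $\E[x_0 x_0^\top] = \Sigma_0$ from~\eqref{eq: noise_assumptions} then produces $2\bigl((R + B^\top \widetilde{P}_{h+1} B) K - B^\top \widetilde{P}_{h+1} A\bigr) \Sigma_0$, which matches $\nabla \widetilde{J}_h (K)$ as computed in~\eqref{eq: actual_gradient}, completing the proof. I do not expect a substantive obstacle: the argument rests entirely on the quadratic structure of $Q_h$ in $u_0$ together with the elementary moment identities for a standard Gaussian ($\E[\eta] = 0$, $\E[\eta \eta^\top] = I_m$, and vanishing odd moments). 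The only point worth tracking carefully is the bookkeeping of signs and factors of $\sigma$ introduced by the substitution $u_0 = -K x_0 + \sigma \eta$ in conjunction with the $-1/\sigma$ normalization appearing in the estimator~\eqref{eq: grad_est_practical_form}.
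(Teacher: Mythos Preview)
Your argument is correct. The decomposition of $Q_h(x_0,-Kx_0+\sigma\eta)$ into constant, linear, and quadratic pieces in $\eta$ is accurate, the identification $\ell(x_0)=-2\bigl((R+B^\top\widetilde P_{h+1}B)K-B^\top\widetilde P_{h+1}A\bigr)x_0$ is correct, and the three Gaussian moment identities you invoke ($\E[\eta]=0$, $\E[\eta\eta^\top]=I_m$, and the vanishing of the third-order moment $\E[(\eta^\top H\eta)\eta]$) dispose of the terms exactly as you claim.

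The paper takes a slightly different route: instead of expanding $Q_h$ in powers of $\eta$, it applies Stein's lemma to the inner expectation, obtaining
\[
\E_\eta\!\left[-\tfrac{1}{\sigma}Q_h(x_0,-Kx_0+\sigma\eta)\,\eta\right]
=-\E_\eta\!\left[\nabla_u Q_h(x_0,u)\big|_{u=-Kx_0+\sigma\eta}\right],
\]
and then computes $\nabla_u Q_h$ directly. Your approach is more elementary---it avoids invoking Stein's lemma and relies only on basic Gaussian moments---but it exploits the quadratic structure of $Q_h$ in $u_0$ explicitly, whereas the Stein-based argument would extend verbatim to any differentiable $Q_h$ with suitable growth. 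In the present LQR setting the two arguments are essentially equivalent in strength, and your version has the mild advantage of being self-contained.
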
 

\begin{proof}
    Following \eqref{eq: grad_est_practical_form}, 
    \begin{align}
        \E [\widehat{\nabla} \widetilde{J}_h (K)] &= \E_{x_0} \left[ \E_{\eta} \left[ \widehat{\nabla} \widetilde{J}_h (K) \big| x_0 \right] \right] \notag \\
        &\overset{\mathrm{(i)}}{=} \E_{x_0} \left[ - \frac{1}{\sigma^2} \E_{\eta} \left[ Q (x_{0}, - K x_{0} + \sigma \eta) (\sigma \eta) \big| x_0 \right] x_0^\top \right] \notag \\
        &\overset{\mathrm{(ii)}}{=} \E_{x_0} \left[ \E_{\eta} \left[ - \nabla_{u} Q^{K} (x_{0}, u) \bigg|_{u = -K x_{0} + \sigma \eta} \big| x_0 \right] x_{0}^\top \right], \label{eq: policy gradient proof before Stein}
    \end{align}
    where (i) follows from $x_{0}^\top$ being determined when given $x_0$, and (ii) from Stein's lemma~\cite{CMS:81}. Using~\eqref{eq: Q_func_def}, we compute 
    \begin{align*}
        \nabla_{u} Q_h (x_{0}, u) &= \nabla_{u} \Bigg( x_{0}^\top Q x_{0} + u^\top R u + (A x_{0} + B u)^\top \widetilde{P}_{h+1} (A x_{0} + B u) \Bigg) \\
        &= 2 R u + 2 B^\top \widetilde{P}_{h+1} B u + 2 B^\top \widetilde{P}_{h+1} A x_{0},
    \end{align*}
    which evaluated at $u = -K x_{0} + \sigma \eta$ yields
    \begin{align*}
        \nabla_{u} Q_h (x_{0}, u) \bigg|_{u = -K x_{0} + \sigma \eta} = 2 \left( (R + B^\top \widetilde{P}_{h+1} B)(-K x_{0} + \sigma \eta) + B^\top \widetilde{P}_{h+1} A x_{0} \right).
    \end{align*}
    Substituting in \eqref{eq: policy gradient proof before Stein}, we obtain
    \begin{align*}
        \E [\widehat{\nabla} \widetilde{J}_h (K)] &= \E_{x_0 \sim \mathcal{D}} \left[ 2 \left( (R + B^\top \widetilde{P}_{h+1} B)K - B^\top \widetilde{P}_{h+1} A \right) x_{0} x_{0}^\top \right] \\
        &= 2 \left( (R + B^\top \widetilde{P}_{h+1} B)K - B^\top \widetilde{P}_{h+1} A \right) \E_{x_0 \sim \mathcal{D}} \left[ x_{0} x_{0}^\top \right] \\
        &= 2 \left( (R + B^\top \widetilde{P}_{h+1} B)K - B^\top \widetilde{P}_{h+1} A \right) \Sigma_0 \\
        &\overset{\mathrm{(i)}}{=} \nabla \widetilde{J}_h (K),
    \end{align*}
    where (i) follows from~\eqref{eq: actual_gradient}.
\end{proof}

Similar to~\cite{XZ-TB:23}, we define the following sets regarding the inner loop of the algorithm for each $h \in \{0,1,\dotsc,N-1\}$:
\begin{equation}
    \G_h := \{ K_h | \widetilde{J}_h (K_h) - \widetilde{J}_h(\widetilde{K}^*_h) \leq 10 \zeta^{-1} \widetilde{J}_h (K_{h,0}) \}, \label{eq: G_lqr_h_def}
\end{equation}
for some arbitrary $\zeta \in (0,1)$. We also define the following constant:
\begin{align*}
    \widetilde{C}_{h} := \frac{10 \zeta^{-1} \widetilde{J}_h (K_{h,0}) + \widetilde{J}_h(\widetilde{K}^*_h)}{\sigma_{\text{min}} (\Sigma_0) \sigma_{\text{min}} (R)}.
\end{align*} 
We next provide a second-moment bound on the size of the estimator:

\begin{lemma}\label{lem: grad_est_bounds}
    Suppose $\| \widetilde{P}_{h+1} - P^*_{h+1} \| \leq a$.
    Then for any $K \in \G_h$, we have that
    \begin{equation}
        \E\left[ \| \widehat{\nabla} \widetilde{J}_h (K) \|_F^2 \right] \leq \xi_{h,3},
    \end{equation}
    where $\xi_{h,1}, \xi_{h,2}, \xi_{h,3} \in \real$ are given by
    \begin{align}
        \xi_{h,1} &:= \Big( \| Q \| + 2 \| R \| \widetilde{C}_{h}^2 + 2 (\| Q_N \| + a) (\|A\|^2 + 2 \|B\|^2 \widetilde{C}_{h}) \Big)  C_m^{3/2}, \\
        \xi_{h,2} &:= 2 \left( \| R \| + 2 (\| Q_N \| + a) \| B \|^2 \right) C_m^{1/2}, \\
        \xi_{h,3} &:= \frac{1}{\sigma^2} \xi_{h,1}^2 m + 2 \xi_{h,1} \xi_{h,2} m (m+2) + \sigma^2 \xi_{h,2}^2 m (m+2) (m+4). \label{eq: xi_3 definition}
    \end{align}
\end{lemma}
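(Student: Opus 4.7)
Plan. The estimator factors as $\widehat{\nabla}\widetilde J_h(K)=-\sigma^{-1}Q_h(x_0,u_0)\,\eta\,x_0^\top$, so
\[
\|\widehat{\nabla}\widetilde J_h(K)\|_F=\sigma^{-1}\,|Q_h(x_0,u_0)|\,\|\eta\|\,\|x_0\|.
\]
I would first obtain a pointwise upper bound on $|Q_h|$ of the form $|Q_h|\le p\|x_0\|^2+q\sigma^2\|\eta\|^2$, then multiply by $\sigma^{-1}\|\eta\|\|x_0\|$, square, and take expectation using $\|x_0\|^2\le C_m$ almost surely together with the Gaussian moments of $\|\eta\|$. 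Two preliminary estimates do the heavy lifting. First, dropping all non-negative terms in~\eqref{eq: J_h_tilde_definition} except $\E[x_0^\top K^\top R K x_0]=\tr(R^{1/2}K\Sigma_0 K^\top R^{1/2})$ gives $\widetilde J_h(K)\ge\sigma_{\min}(R)\sigma_{\min}(\Sigma_0)\|K\|_F^2$, which combined with $K\in\G_h$ from~\eqref{eq: G_lqr_h_def} yields $\|K\|^2\le\widetilde C_h$. Second, the hypothesis $\|\widetilde P_{h+1}-P^*_{h+1}\|\le a$ and the monotonicity chain $P^*_{h+1}\preceq P^*_N=Q_N$ recorded just after Theorem~\ref{thm: RDE converges to ARE} give $\|\widetilde P_{h+1}\|\le\|Q_N\|+a$.

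Next I would substitute $u_0=-Kx_0+\sigma\eta$ into~\eqref{eq: Q_func_def} and apply the triangle inequality,
\[
|Q_h|\le\|Q\|\|x_0\|^2+\|R\|\|u_0\|^2+\|\widetilde P_{h+1}\|\,\|Ax_0+Bu_0\|^2,
\]
followed by $\|u_0\|^2\le 2\|K\|^2\|x_0\|^2+2\sigma^2\|\eta\|^2$ and $\|Ax_0+Bu_0\|^2\le 2\|A\|^2\|x_0\|^2+2\|B\|^2\|u_0\|^2$. Grouping the resulting terms by whether they carry $\|x_0\|^2$ or $\sigma^2\|\eta\|^2$ produces a deterministic decomposition $|Q_h|\le p\|x_0\|^2+q\sigma^2\|\eta\|^2$ whose coefficients, after inserting the bounds from the previous paragraph, match those inside $\xi_{h,1}$ and $\xi_{h,2}$. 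Multiplying by $\sigma^{-1}\|\eta\|\|x_0\|$ and using $\|x_0\|^{2k+1}\le C_m^{k+1/2}$ then yields
\[
\|\widehat{\nabla}\widetilde J_h(K)\|_F\le\sigma^{-1}\xi_{h,1}\|\eta\|+\sigma\,\xi_{h,2}\|\eta\|^3,
\]
where the asymmetric $C_m^{3/2}$ versus $C_m^{1/2}$ powers reflect the extra factor $\|x_0\|=C_m^{1/2}$ absorbed by the $\|x_0\|^2$ piece of $Q_h$ that the $\|\eta\|^2$ piece does not carry.

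Squaring this bound yields three monomials in $\|\eta\|$ with exponents $2,4,6$ and coefficients $\sigma^{-2}\xi_{h,1}^2$, $2\xi_{h,1}\xi_{h,2}$, and $\sigma^2\xi_{h,2}^2$; taking expectation over $\eta\sim\N(0,I_m)$ and invoking the chi-squared moment identity $\E[\|\eta\|^{2k}]=m(m+2)\cdots(m+2k-2)$ reproduces exactly the three summands defining $\xi_{h,3}$ in~\eqref{eq: xi_3 definition}. The main obstacle is the deterministic bound on $\|K\|$: the set $\G_h$ is merely a cost-sublevel set and by itself provides no a priori norm control. Obtaining $\|K\|^2\le\widetilde C_h$ relies on the coercivity of $\widetilde J_h$ in $K$, which in turn requires both $R\succ 0$ and $\Sigma_0\succ 0$; once this is secured, everything downstream is routine triangle-inequality bookkeeping, Young's inequality, and standard Gaussian moment calculations.
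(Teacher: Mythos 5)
Your proposal is correct and follows essentially the same route as the paper's proof: the sublevel-set bound $\|K\|_F^2\le\widetilde C_h$ via coercivity of $\widetilde J_h$, the bound $\|\widetilde P_{h+1}\|\le\|Q_N\|+a$ from the hypothesis, the decomposition of $Q_h$ into an $\|x_0\|^2$ piece and a $\sigma^2\|\eta\|^2$ piece yielding $\|\widehat{\nabla}\widetilde J_h(K)\|_F\le\sigma^{-1}\xi_{h,1}\|\eta\|+\sigma\xi_{h,2}\|\eta\|^3$, and the chi-squared moment identities to conclude. No substantive differences.
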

\begin{proof}
    Using the Formulation of $\widehat{\nabla} \widetilde{J}_h (K)$ derived in \eqref{eq: grad_est_practical_form}, we have
    \begin{align}
        \| \widehat{\nabla} \widetilde{J}_h (K) \|_F &= \| \frac{1}{\sigma} Q_h(x_0, - K x_0 + \sigma \eta) \eta x_0^\top \|_F \cr
        &\leq \frac{1}{\sigma} Q_h(x_0, - K x_0 + \sigma \eta) \| \eta \| \| x_0 \|. \label{eq: lem_grad_est_bounds_1}
    \end{align}
    Before we continue, we provide the following bound:    \begin{sublemma}\label{sublem: K_size_bound}
        Suppose $K \in \G_h$. Then it holds that
        \begin{equation}
            \| K \|^2_F \leq \widetilde{C}_{h}.
        \end{equation}
    \end{sublemma}
    \emph{Proof of Sublemma~\ref{sublem: K_size_bound}.}
    Using \eqref{eq: J_h_tilde_definition}, we have
    \begin{align}
        \widetilde{J}_h (K) &\geq \E_{x_0 \sim \D} \left[ x_0^\top (Q+K^\top R K) x_0 \right] \cr
        &= \E_{x_0 \sim \D} \left[ \tr \left( (Q+K^\top R K) x_0 x_0^\top \right) \right] \cr
        &= \tr \left( (Q+K^\top R K) \Sigma_0 \right) \cr
        &\geq \sigma_{\text{min}} (\Sigma_0) \tr (Q+K^\top R K) \cr
        &\geq \sigma_{\text{min}} (\Sigma_0) \tr (R K K^\top) \cr
        &\geq \sigma_{\text{min}} (\Sigma_0) \sigma_{\text{min}} (R) \| K \|^2_F. \label{eq: sublem_K_size_bound_1}
    \end{align}
    Rearranging \eqref{eq: sublem_K_size_bound_1} yields
    \begin{align*}
        \| K \|^2_F &\leq \frac{\widetilde{J}_h (K)}{\sigma_{\text{min}} (\Sigma_0) \sigma_{\text{min}} (R)} \\
        &\overset{\mathrm{(i)}}{\leq} \frac{10 \zeta^{-1} \widetilde{J}_h (K_{h,0}) + \widetilde{J}_h(\widetilde{K}^*_h)}{\sigma_{\text{min}} (\Sigma_0) \sigma_{\text{min}} (R)} \\
        &= \widetilde{C}_{h},
    \end{align*}
    where (i) follows from the definition of the set $\G_h$ in \eqref{eq: G_lqr_h_def}. This concludes the proof of Sublemma~\ref{sublem: K_size_bound}. \oprocend
    
        We now continue with the proof of the Lemma~\ref{lem: grad_est_bounds}. Note that
        \begin{align*}
            &Q_h(x_0, - K x_0 + \sigma \eta) \cr
            = &x_0^\top Q x_0 + (- K x_0 + \sigma \eta)^\top R (- K x_0 + \sigma \eta) + (A x_0 + B(- K x_0 + \sigma \eta))^\top \widetilde{P}_{h+1} (A x_0 + B(- K x_0 + \sigma \eta)) \cr
            \leq &\| Q \| C_m + \| R \| \| - K x_0 + \sigma \eta \|^2 + \| \widetilde{P}_{h+1} \| \| A x_0 + B(- K x_0 + \sigma \eta) \|^2. \end{align*} 
    As a result,
    \begin{align}
        &Q_h(x_0, - K x_0 + \sigma \eta) \cr \leq &\| Q \| C_m + 2 \| R \| (\widetilde{C}_{h} C_m + \sigma^2 \| \eta \|^2) + 2 (\| Q_N \| + a) \|A\|^2 C_m + 4 (\| Q_N \| + a)  \|B\|^2 (\widetilde{C}_{h} C_m + \sigma^2 \| \eta \|^2) ) \cr
        = &C_m( \|Q\| + 2 \|R\| \widetilde{C}_{h}) + 2 C_m(\| Q_N \| + a) ( \|A\|^2 + 2 \|B\|^2 \widetilde{C}_{h} ) + 2 \left( \|R\| + 2 (\| Q_N \| + a) \|B\|^2 \right) \sigma^2 \| \eta \|^2, \label{eq: lem_grad_est_bounds_2}
    \end{align}    
     where the inequality follows from Sublemma~\ref{sublem: K_size_bound} along with the fact that by the assumption, 
    \begin{align*}
    \| \widetilde{P}_{h+1} \| &= \| P^*_{h+1} + (\widetilde{P}_{h+1} - P^*_{h+1}) \| \cr
    &\leq \| P^*_{h+1} \| + \| \widetilde{P}_{h+1} - P^*_{h+1} \| \cr
    &\leq \| Q_N \| + a.
    \end{align*}
    Combining \eqref{eq: lem_grad_est_bounds_1} with \eqref{eq: lem_grad_est_bounds_2} and \eqref{eq: noise_assumptions}, we obtain
    \begin{align}
        \| \widehat{\nabla} \widetilde{J}_h (K) \|_F &\leq \frac{1}{\sigma} \Bigg( \|Q\| + 2 \|R\| \widetilde{C}_{h} + 2 (\| Q_N \| + a) ( \|A\|^2 + 2 \|B\|^2 \widetilde{C}_{h} ) \Bigg) C_m^{3/2} \| \eta \| \cr
        &\quad + 2 \left( \|R\| + 2 (\| Q_N \| + a) \|B\|^2 \right) \sigma C_m^{1/2} \| \eta \|^3 \cr
        &= \frac{1}{\sigma} \xi_{h,1} \| \eta \| + \sigma \xi_{h,2} \| \eta \|^3. \label{eq: grad_est_size_bound_before_LM}
    \end{align}
    Now note that using~\eqref{eq: grad_est_size_bound_before_LM}, we have
    \begin{align}
        \| \widehat{\nabla} \widetilde{J}_h (K) \|^2_F \leq \frac{1}{\sigma^2} \xi_{h,1}^2 \| \eta \|^2 + 2 \xi_{h,1} \xi_{h,2} \| \eta \|^4 + \sigma^2 \xi_{h,2}^2 \| \eta \|^6. \label{eq: before_chi_moments}
    \end{align}
    Now since $\| \eta \| \sim \chi (m)$ whose moments are known, taking an expectation on~\eqref{eq: before_chi_moments} results in
    \begin{align*}
        \E \left[ \| \widehat{\nabla} \widetilde{J}_h (K) \|^2_F \right] &\leq \frac{1}{\sigma^2} \xi_{h,1}^2 \E [\| \eta \|^2] + 2 \xi_{h,1} \xi_{h,2} \E [\| \eta \|^4] + \sigma^2 \xi_{h,2}^2 \E [\| \eta \|^6] \\
        &= \frac{1}{\sigma^2} \xi_{h,1}^2 m + 2 \xi_{h,1} \xi_{h,2} m (m+2) + \sigma^2 \xi_{h,2}^2 m (m+2) (m+4) \\
        &= \xi_{h,3},
    \end{align*}
    concluding the proof.
\end{proof}

We next establish some useful properties of the cost function $\widetilde{J}_h (K)$ in the following lemma.

\begin{lemma}\label{lem: cost function properties}
For all $h \in \{0,1,\dotsc,N-1\}$, the function $\widetilde{J}_h$ is $\frac{\mu}{2}$-strongly convex, where
\[
\mu := 4 \sigma_{\min} (\Sigma_0) \sigma_{\min} (R),
\]
and in particular, for all $K \in \real^{m \times n}$,
\begin{equation}
    \| \nabla \widetilde{J}_h (K) \|_F^2 \geq \mu (\widetilde{J}_h (K) - \widetilde{J}_h (\widetilde{K}^*_h)),
\end{equation}
where $\widetilde{K}^*_h$ is the global minimizer of $\widetilde{J}_h$. Moreover, assuming that $\| \widetilde{P}_{h+1} - P^*_{h+1} \| \leq a$, we have that for all $K_1,K_2 \in \real^{m \times n}$,
\begin{equation}
    \| \nabla \widetilde{J}_h (K_2) - \nabla \widetilde{J}_h (K_1) \|_F \leq L \| K_2 - K_1 \|_F,
\end{equation}
where 
\[
L := C_3 \| \Sigma_0 \|. 
\]
\end{lemma}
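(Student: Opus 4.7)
The plan is to exploit the fact that $\widetilde{J}_h$ is a (shifted) quadratic form in $K$ with constant Hessian, so all three claims reduce to algebraic manipulations of the closed-form expression
\[
\widetilde{J}_h(K) = \tr\bigl((Q + K^\top R K)\Sigma_0\bigr) + \tr\bigl((A-BK)^\top \widetilde{P}_{h+1}(A-BK)\Sigma_0\bigr),
\]
which follows from~\eqref{eq: J_tilde_init2} by taking expectation over $x_0$ and using $\E[x_0 x_0^\top] = \Sigma_0$. Here $\widetilde{P}_{h+1}$ is a fixed symmetric matrix, and it is positive semidefinite by induction on the Lyapunov recursion~\eqref{eq: lqr_lyapunov} starting from $\widetilde{P}_N = Q_N \succeq 0$.

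To establish strong convexity, I would expand $\widetilde{J}_h(K+H) - \widetilde{J}_h(K) - \langle \nabla\widetilde{J}_h(K), H\rangle$ and read off the quadratic-in-$H$ term, which yields the constant Hessian
\[
\nabla^2 \widetilde{J}_h(K)[H,H] = 2\tr\bigl(H^\top(R + B^\top \widetilde{P}_{h+1} B) H \Sigma_0\bigr).
\]
Rewriting this via the cyclic property of the trace as $2\tr\bigl((H\Sigma_0^{1/2})^\top (R+B^\top\widetilde{P}_{h+1}B)(H\Sigma_0^{1/2})\bigr)$, and then using the eigenvalue bound for symmetric matrices together with $\|H\Sigma_0^{1/2}\|_F^2 \geq \sigma_{\min}(\Sigma_0)\|H\|_F^2$ and the operator inequality $R+B^\top\widetilde{P}_{h+1}B \succeq R$, I obtain $\nabla^2\widetilde{J}_h(K)[H,H] \geq 2\sigma_{\min}(R)\sigma_{\min}(\Sigma_0)\|H\|_F^2 = \tfrac{\mu}{2}\|H\|_F^2$. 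This delivers $\tfrac{\mu}{2}$-strong convexity and does not use the assumption on $\|\widetilde{P}_{h+1}-P^*_{h+1}\|$. The gradient-domination inequality then follows by the standard implication: minimizing the quadratic lower bound $\widetilde{J}_h(K') \geq \widetilde{J}_h(K) + \langle\nabla\widetilde{J}_h(K), K'-K\rangle + \tfrac{\mu}{4}\|K'-K\|_F^2$ over $K'$ in closed form and specializing to $K' = \widetilde{K}^*_h$ yields $\widetilde{J}_h(K) - \widetilde{J}_h(\widetilde{K}^*_h) \leq \tfrac{1}{\mu}\|\nabla\widetilde{J}_h(K)\|_F^2$.

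For the Lipschitz-gradient claim, the explicit formula~\eqref{eq: actual_gradient} shows that $\nabla \widetilde{J}_h(K)$ is affine in $K$, so $\nabla\widetilde{J}_h(K_2) - \nabla\widetilde{J}_h(K_1) = 2(R + B^\top\widetilde{P}_{h+1}B)(K_2-K_1)\Sigma_0$, and sub-multiplicativity gives $\|\nabla\widetilde{J}_h(K_2)-\nabla\widetilde{J}_h(K_1)\|_F \leq 2\|R+B^\top\widetilde{P}_{h+1}B\|\,\|\Sigma_0\|\,\|K_2-K_1\|_F$. The main subtlety, and the only place the assumption $\|\widetilde{P}_{h+1}-P^*_{h+1}\|\leq a$ enters, is upgrading this \emph{spectral} assumption into the \emph{operator} inequality $\widetilde{P}_{h+1} \preceq Q_N + aI$: symmetry of $\widetilde{P}_{h+1}-P^*_{h+1}$ together with the norm bound gives $\widetilde{P}_{h+1}-P^*_{h+1} \preceq aI$, and combining with the Riccati monotonicity $P^*_{h+1}\preceq P^*_N = Q_N$ noted after Theorem~\ref{thm: RDE converges to ARE} yields the desired inequality. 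Consequently $R+B^\top\widetilde{P}_{h+1}B \preceq R+B^\top(Q_N+aI)B$, and since both sides are positive definite, this transfers to spectral norms to give $\|R+B^\top\widetilde{P}_{h+1}B\| \leq \tfrac{1}{2}C_3$, and hence $L = C_3\|\Sigma_0\|$. I do not anticipate any genuine obstacle beyond this operator-vs-spectral bookkeeping, as the quadratic structure of $\widetilde{J}_h$ makes the remaining computations essentially mechanical.
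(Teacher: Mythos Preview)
Your proposal is correct and follows essentially the same route as the paper: both establish strong convexity by lower-bounding the quadratic form $2\tr\bigl((K_2-K_1)^\top(R+B^\top\widetilde{P}_{h+1}B)(K_2-K_1)\Sigma_0\bigr)$ (you via the Hessian, the paper via the monotone-gradient characterization), invoke the standard strong-convexity $\Rightarrow$ PL implication, and then read off $L$-smoothness from the affine gradient formula~\eqref{eq: actual_gradient} together with $\widetilde{P}_{h+1}\preceq Q_N+aI$. Your treatment is in fact slightly more careful than the paper's in spelling out why the spectral-norm assumption $\|\widetilde{P}_{h+1}-P^*_{h+1}\|\leq a$ upgrades to the operator inequality $\widetilde{P}_{h+1}\preceq Q_N+aI$.
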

\begin{proof}
We first prove the strong convexity as follows:
\begin{align*}
    \left\langle \nabla \widetilde{J}_h (K_2) - \nabla \widetilde{J}_h (K_1), K_2 - K_1 \right\rangle &= 2 \tr \left( \Sigma_0 (K_2 - K_1)^\top (R + B^\top \widetilde{P}_{h+1} B) (K_2 - K_1) \right) \\
    &\geq 2 \sigma_{\min} (\Sigma_0) \sigma_{\min} (R) \tr \left( (K_2 - K_1)^\top (K_2 - K_1) \right) \\
    &= \frac{\mu}{2} \| K_2 - K_1 \|_F^2.
\end{align*}
Note the the next inequality is an immediate 
consequence of the PL-inequality. Now we move on to the $L$-smoothness property:
\begin{align*}
    \| \nabla \widetilde{J}_h (K_2) - \nabla \widetilde{J}_h (K_1) \|_F  &= \| 2 (R + B^\top \widetilde{P}_{h+1} B) (K_2 - K_1) \Sigma_0 \|_F \\
    &\leq \| \Sigma_0 \| (2 \| R + B^\top \widetilde{P}_{h+1} B \|) \| K_2 - K_1 \|_F \\
    &\leq \| \Sigma_0 \| (2 \| R + B^\top (Q_N + a I) B \|) \| K_2 - K_1 \|_F \\
    &= \| \Sigma_0 \| C_3 \| K_2 - K_1 \|_F \\
    &= L \| K_2 - K_1 \|_F,
\end{align*}
concluding the proof.
\end{proof}

Before introducing the next result, let us denote the optimality gap of iterate $t$ by
\begin{equation}
    \Delta_{t} = \widetilde{J}_h (K_{h,t}) - \widetilde{J}_h (\widetilde{K}^*_h). \label{eq: delta_def}
\end{equation}
Moreover, let $\F_{t}$ denote the $\sigma$-algebra containing the randomness up to iteration $t$ of the inner loop of the algorithm for each $h \in \{0,1,\dotsc,N-1\}$ (including $K_{h,t}$ but not $\widehat{\nabla} \widetilde{J}_h (K_{h,t})$). We then define
\begin{equation}
    \tau := \min \left\{ t \ |  \ \Delta_{t} > 10 \zeta^{-1} \widetilde{J}_h(K_{h,0}) \right\}, \label{eq: tau_1 definition}
\end{equation}
which is a stopping time with respect to $\F_{t}$. Note that we did some notation abuse as $\Delta_t, \F_t,$ and $\tau$ may differ for each $h \in \{0,1,\dotsc,N-1\}$. But since these steps $h$ of the outer loop do not impact one another, we used just one notation for simplicity.

Combining the unbiasedness of the estimate (Proposition~\ref{prop: gradient estimate expectation}), the second-moment bound on its size (Lemma~\ref{lem: grad_est_bounds}), and the regularity properties of $\widetilde{J}_h$ (Lemma~\ref{lem: cost function properties}), we now show the following descent recursion for the optimality gap:
\begin{lemma}
    \label{lem: policy gradient recursive}
    Suppose $\| \widetilde{P}_{h+1} - P^*_{h+1} \| \leq a$, and the update rule follows
    \begin{equation}
        K_{h,t+1} = K_{h,t} - \alpha_{h,t} \widehat{\nabla} \widetilde{J}_h (K_{h,t}), \label{eq: our_update}
    \end{equation}
    where $\alpha_{h,t} > 0$ is the step-size. Then for any $t \in \{0,1,2,\dotsc\}$, we have
    \begin{equation}
        \E[\Delta_{t+1} | \F_{t}] 1_{\tau > t} \leq \left( \left(1 - \mu \alpha_{h,t} \right) \Delta_t + \frac{L \alpha_{h,t}^2}{2} \xi_{h,3} \right) 1_{\tau > t}, \label{eq: policy gradient recursive}
    \end{equation}
    where $\Delta_{t}$ is defined in \eqref{eq: delta_def}.
\end{lemma}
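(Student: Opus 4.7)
My plan is to run the standard stochastic gradient descent analysis for a smooth function satisfying a PL-inequality, with the only twist being that the second-moment bound on the estimator is only available on the event $\{K_{h,t}\in\G_h\}$, which is why the indicator $1_{\tau>t}$ is present. The bound in Lemma~\ref{lem: grad_est_bounds} and the strong convexity/smoothness bounds in Lemma~\ref{lem: cost function properties} will do all the heavy lifting; the structure of the argument is essentially forced once these are in place.

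\textbf{Step 1: apply $L$-smoothness.} Starting from the $L$-smoothness of $\widetilde{J}_h$ from Lemma~\ref{lem: cost function properties}, I would write the descent inequality
\[
\widetilde{J}_h(K_{h,t+1}) \leq \widetilde{J}_h(K_{h,t}) + \innerp{\nabla\widetilde{J}_h(K_{h,t})}{K_{h,t+1}-K_{h,t}} + \tfrac{L}{2}\|K_{h,t+1}-K_{h,t}\|_F^2,
\]
then substitute the update rule $K_{h,t+1}=K_{h,t}-\alpha_{h,t}\widehat{\nabla}\widetilde{J}_h(K_{h,t})$ from \eqref{eq: our_update} and subtract $\widetilde{J}_h(\widetilde{K}_h^*)$ from both sides to get a one-step bound on $\Delta_{t+1}$.

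\textbf{Step 2: take conditional expectation.} Conditioning on $\F_{t}$, I would use Proposition~\ref{prop: gradient estimate expectation} (unbiasedness) to replace $\E[\widehat{\nabla}\widetilde{J}_h(K_{h,t})\mid\F_t]$ by $\nabla\widetilde{J}_h(K_{h,t})$ in the linear term, turning it into $-\alpha_{h,t}\|\nabla\widetilde{J}_h(K_{h,t})\|_F^2$. For the quadratic term, I note that it is of the form $\tfrac{L\alpha_{h,t}^2}{2}\E[\|\widehat{\nabla}\widetilde{J}_h(K_{h,t})\|_F^2\mid\F_t]$. Finally, I would invoke the PL-inequality $\|\nabla\widetilde{J}_h(K_{h,t})\|_F^2 \geq \mu\,\Delta_t$ from Lemma~\ref{lem: cost function properties} to replace the gradient-norm-squared term and obtain
\[
\E[\Delta_{t+1}\mid\F_t] \leq (1-\mu\alpha_{h,t})\Delta_t + \tfrac{L\alpha_{h,t}^2}{2}\E\!\left[\|\widehat{\nabla}\widetilde{J}_h(K_{h,t})\|_F^2\,\big|\,\F_t\right].
\]

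\textbf{Step 3: localize via the indicator.} The main subtlety, and the step I expect to require the most care, is that the second-moment bound $\xi_{h,3}$ from Lemma~\ref{lem: grad_est_bounds} is only guaranteed on the event $\{K_{h,t}\in\G_h\}$ (since the proof of that lemma used Sublemma~\ref{sublem: K_size_bound}, which needs $K\in\G_h$). I would therefore multiply the previous display by $1_{\tau>t}$, which is $\F_t$-measurable so it passes freely through the conditional expectation. By the definition of $\tau$ in \eqref{eq: tau_1 definition} together with the definition of $\G_h$ in \eqref{eq: G_lqr_h_def}, on the event $\{\tau>t\}$ we have $\Delta_t\leq 10\zeta^{-1}\widetilde{J}_h(K_{h,0})$, hence $K_{h,t}\in\G_h$, so Lemma~\ref{lem: grad_est_bounds} yields $\E[\|\widehat{\nabla}\widetilde{J}_h(K_{h,t})\|_F^2\mid\F_t]\cdot 1_{\tau>t} \leq \xi_{h,3}\cdot 1_{\tau>t}$. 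Combining the inequalities gives exactly \eqref{eq: policy gradient recursive}. The only nontrivial pieces are the indicator bookkeeping and recognizing that the hypothesis $\|\widetilde{P}_{h+1}-P_{h+1}^*\|\leq a$ is precisely what is required both for Lemma~\ref{lem: grad_est_bounds} and for the $L$-smoothness constant in Lemma~\ref{lem: cost function properties}; everything else is a direct assembly of previously established facts.
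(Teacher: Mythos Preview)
Your proposal is correct and follows essentially the same approach as the paper: apply $L$-smoothness, substitute the update rule, take conditional expectation using unbiasedness (Proposition~\ref{prop: gradient estimate expectation}), invoke the PL-inequality from Lemma~\ref{lem: cost function properties}, and localize via the indicator $1_{\tau>t}$ so that Lemma~\ref{lem: grad_est_bounds} applies because $\{\tau>t\}\subseteq\{K_{h,t}\in\G_h\}$. The only cosmetic difference is that the paper multiplies by $1_{\tau>t}$ before taking the conditional expectation whereas you do so afterwards, but since $1_{\tau>t}$ is $\F_t$-measurable these are equivalent.
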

\begin{proof}
    First, note that by $L$-smoothness, we have
    \begin{align*}
        \Delta_{t+1} - \Delta_{t} &= \widetilde{J}_h (K_{h,t+1}) - \widetilde{J}_h (K_{h,t}) \\
        &\leq \langle \nabla \widetilde{J}_h (K_{h,t}), K_{h,t+1} - K_{h,t} \rangle 
        + \frac{L}{2} \| K_{h,t+1} - K_{h,t} \|_F^2 \\
        &= -\alpha_{h,t} \langle \nabla \widetilde{J}_h (K_{h,t}), \widehat{\nabla} \widetilde{J}_h (K_{h,t}) \rangle + \frac{L \alpha_{h,t}^2}{2} \| \widehat{\nabla} \widetilde{J}_h (K_{h,t}) \|_F^2,
    \end{align*}
    which after multiplying by $1_{\tau > t}$ (which is determined by $\F_{t}$) and taking an expectation conditioned on $\F_{t}$ gives
    \begin{align}
        \E [\Delta_{t+1} - \Delta_t | \F_t] 1_{\tau > t} &\leq -\alpha_{h,t} \langle \nabla \widetilde{J}_h (K_{h,t}), \E [\widehat{\nabla} \widetilde{J}_h (K_{h,t}) | \F_t] \rangle 1_{\tau > t} + \frac{L \alpha_{h,t}^2}{2} \E [\| \widehat{\nabla} \widetilde{J}_h (K_{h,t}) \|_F^2 | \F_t] 1_{\tau > t} \cr
        &\overset{\mathrm{(i)}}{\leq} -\alpha_{h,t} \| \nabla \widetilde{J}_h (K_{h,t}) \|_F^2 1_{\tau > t} + \frac{L \alpha_{h,t}^2}{2} \xi_{h,3} 1_{\tau > t} \cr
        &\overset{\mathrm{(ii)}}{\leq} -\alpha_{h,t} \mu \Delta_t 1_{\tau > t} + \frac{L \alpha_{h,t}^2}{2} \xi_{h,3} 1_{\tau > t}, \label{eq: lem_recursive_prelim}
    \end{align}
    where (i) follows from Proposition~\ref{prop: gradient estimate expectation}, Lemma~\ref{lem: grad_est_bounds} along with the fact that the event $\{\tau > t\}$ implies $K_{h,t} \in \G_h$, and (ii) is due to Lemma~\ref{lem: cost function properties}. 
    
    Now after some rearranging on~\eqref{eq: lem_recursive_prelim} and noting that $\Delta_t$ is also determined by $\F_t$, we conclude that 
    \begin{align}
        \E [\Delta_{t+1} | \F_t] 1_{\tau > t} \leq \left( \left(1 - \mu \alpha_{h,t} \right) \Delta_t + \frac{L \alpha_{h,t}^2}{2} \xi_{h,3} \right) 1_{\tau > t},
    \end{align}
    finishing the proof.
\end{proof}

We are now in a position to state a precise version of our main result for the inner loop.

\begin{theorem}\label{thm: policy_gradient}\longthmtitle{Main result: inner loop}
    Suppose $\| \widetilde{P}_{h+1} - P^*_{h+1} \| \leq a$. 
    For any $h \in \{0,1,\dotsc,N-1\}$, if the step-size is chosen as
    \begin{equation}
        \alpha_{h,t} = \frac{2}{\mu} \frac{1}{t+\theta_h} \quad \text{for} \quad \theta_h = \max \{2,\frac{2 L \xi_{h,3}}{\mu^2 \widetilde{J}_h (K_{h,0})}\}, \label{eq: step-size value choice}
    \end{equation}
    then for a given error tolerance $\varsigma$, the iterate $K_{h,T_h}$ of the update rule~\eqref{eq: our_update}
    after
    \[
    T_h = \frac{40}{7 \mu \varsigma^2 \zeta} \theta_h \widetilde{J}_h (K_{h,0})
    \]
    steps satisfies
    \[
    \| K_{h,T_h} - \widetilde{K}^*_{h} \|_F \leq \varsigma,
    \]
    with a probability of at least $1 - \zeta$.
\end{theorem}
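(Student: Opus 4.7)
The plan is to reduce the norm bound to an optimality-gap bound and then control the optimality gap via the stopped process built in Lemma~\ref{lem: policy gradient recursive}. By the strong convexity of $\widetilde{J}_h$ with parameter $\mu/2$ (Lemma~\ref{lem: cost function properties}), one has $\Delta_{T_h}\ge \tfrac{\mu}{4}\,\|K_{h,T_h}-\widetilde{K}^*_h\|_F^{2}$, so it suffices to show $\PP\!\left(\Delta_{T_h}>\tfrac{\mu\varsigma^{2}}{4}\right)\le\zeta$. I then split the failure event as
\[
\{\Delta_{T_h}>\tfrac{\mu\varsigma^{2}}{4}\}\subseteq\{\tau\le T_h\}\;\cup\;\bigl(\{\tau>T_h\}\cap\{\Delta_{T_h}>\tfrac{\mu\varsigma^{2}}{4}\}\bigr),
\]
and budget the two contributions so that they sum to at most $\zeta$.

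For the second event I work with the truncated sequence $Y_t:=\Delta_t\mathbf{1}_{\tau>t-1}$. Taking an unconditional expectation in the recursion of Lemma~\ref{lem: policy gradient recursive} and using $\mathbf{1}_{\tau>t}\le \mathbf{1}_{\tau>t-1}$ produces the scalar recursion
$\E[Y_{t+1}]\le (1-\mu\alpha_{h,t})\E[Y_t]+\tfrac{L\alpha_{h,t}^{2}}{2}\xi_{h,3}$. Plugging in $\alpha_{h,t}=\tfrac{2}{\mu(t+\theta_h)}$, an elementary induction shows $\E[Y_t]\le \tfrac{\theta_h\,\widetilde{J}_h(K_{h,0})}{t+\theta_h}$; the inductive step closes precisely because the choice $\theta_h\ge \tfrac{2L\xi_{h,3}}{\mu^{2}\,\widetilde{J}_h(K_{h,0})}$ dominates the noise contribution. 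Substituting the prescribed $T_h=\tfrac{40\,\theta_h\,\widetilde{J}_h(K_{h,0})}{7\mu\varsigma^{2}\zeta}$ and applying Markov's inequality yields $\PP\bigl(\{\tau>T_h\}\cap\{\Delta_{T_h}>\tfrac{\mu\varsigma^{2}}{4}\}\bigr)\le \tfrac{7\zeta}{10}$, where the constant $40/7$ is tuned to hit this fraction of the total budget.

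For the excursion event $\{\tau\le T_h\}$ a single-time Markov inequality is insufficient, so I introduce the supermartingale
\[
V_t\;:=\;Y_t\;+\;\sum_{s\ge t}\frac{L\alpha_{h,s}^{2}}{2}\xi_{h,3}.
\]
Using the same recursion, $V_{t\wedge\tau}$ is a non-negative supermartingale; the infinite tail $\sum_{s\ge 0}\tfrac{L\alpha_{h,s}^{2}}{2}\xi_{h,3}$ is bounded above by $\widetilde{J}_h(K_{h,0})$ (again via the chosen $\theta_h$ and the estimate $\sum_{s\ge 0}(s+\theta_h)^{-2}\le 2/\theta_h$), so $\E[V_0]\le 2\widetilde{J}_h(K_{h,0})$. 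On $\{\tau\le T_h\}$ we have $V_\tau\ge \Delta_\tau>10\zeta^{-1}\widetilde{J}_h(K_{h,0})$, hence Ville's maximal inequality gives $\PP(\tau\le T_h)\le \tfrac{\zeta}{5}$. Summing the two bounds produces $\tfrac{7\zeta}{10}+\tfrac{\zeta}{5}=\tfrac{9\zeta}{10}\le \zeta$, which completes the argument.

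The main obstacle is the interaction between the stochastic recursion and the stopping time $\tau$: the per-step inequality in Lemma~\ref{lem: policy gradient recursive} only holds on $\{\tau>t\}$, because the second-moment estimate of Lemma~\ref{lem: grad_est_bounds} requires $K_{h,t}\in\G_h$. Consequently, classical SGD convergence analyses cannot be invoked verbatim; the indicator $\mathbf{1}_{\tau>t}$ must be threaded through every step, and the supermartingale $V_{t\wedge\tau}$ is what allows a clean maximal-inequality control of the exit probability rather than a much looser union bound over $t\le T_h$.
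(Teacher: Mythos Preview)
Your argument is essentially the same as the paper's: reduce to an optimality-gap event via strong convexity, prove the inductive $\E[\Delta_t\mathbf 1_{\tau>t}]\le \theta_h\widetilde J_h(K_{h,0})/(t+\theta_h)$ bound and apply Markov, and handle $\{\tau\le T_h\}$ with a compensated supermartingale and Ville's inequality (the paper uses $\Delta_{t\wedge\tau}+\tfrac{4L\xi_{h,3}}{\mu^2}\tfrac{1}{t+\theta_h}$, which is just a closed-form majorant of your tail sum). One small arithmetic slip: with your own estimate $\sum_{s\ge0}(s+\theta_h)^{-2}\le 2/\theta_h$ the tail is bounded by $2\widetilde J_h(K_{h,0})$, not $\widetilde J_h(K_{h,0})$, so $\E[V_0]\le 3\widetilde J_h(K_{h,0})$ and $\PP(\tau\le T_h)\le \tfrac{3}{10}\zeta$; combined with $\tfrac{7}{10}\zeta$ this still gives $\le\zeta$, so the conclusion is unaffected.
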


The proof of this result relies heavily on Proposition~\ref{prop: lqr_policy}, which we establish next.
\begin{proposition}\label{prop: lqr_policy}
    Under the parameter settings of Theorem~\ref{thm: policy_gradient}, we have that
    \[
    \E[\Delta_{T_h} 1_{\tau > T_h}] \leq \frac{7}{40} \mu \varsigma^2 \zeta.
    \]
    Moreover, the event $\{ \tau \leq T_h \}$ happens with probability of at most $\frac{3}{10} \zeta$.
\end{proposition}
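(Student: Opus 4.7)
The plan is to combine the one-step descent recursion of Lemma~\ref{lem: policy gradient recursive} with the diminishing step-size $\alpha_{h,t} = 2/(\mu(t+\theta_h))$ to produce (i) an induction that controls $\E[\Delta_t 1_{\tau > t}]$, yielding the first claim, and (ii) a stopped-process argument that gives a uniform-in-time bound on $\E[\Delta_{t \wedge \tau}]$, from which the probability claim follows via Markov's inequality applied at the very level that defines $\tau$.

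For the first claim, substituting the step size into the recursion and using the monotonicity $1_{\tau > t+1} \leq 1_{\tau > t}$ gives
\[
\E[\Delta_{t+1}\,1_{\tau > t+1}] \;\leq\; \frac{t+\theta_h-2}{t+\theta_h}\,\E[\Delta_t\,1_{\tau > t}] \;+\; \frac{2 L\,\xi_{h,3}}{\mu^2 (t+\theta_h)^2}.
\]
A direct induction then yields $\E[\Delta_t\,1_{\tau > t}] \leq \theta_h\,\widetilde{J}_h(K_{h,0})/(t+\theta_h-1)$: the inductive step reduces to verifying $\theta_h\,\widetilde{J}_h(K_{h,0}) \geq 2L\xi_{h,3}/\mu^2$, which is exactly the second branch of the definition of $\theta_h$, while the base case uses that $\tau \geq 1$ almost surely (because $\Delta_0 \leq \widetilde{J}_h(K_{h,0}) \leq 10\zeta^{-1}\widetilde{J}_h(K_{h,0})$ for $\zeta \in (0,1)$) together with $\theta_h/(\theta_h-1) \geq 1$ from $\theta_h \geq 2$. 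Evaluating the bound at $t = T_h$, using $T_h + \theta_h - 1 \geq T_h$, and plugging in the definition of $T_h$ gives exactly $\frac{7}{40}\mu\varsigma^2\zeta$.

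For the probability claim, I will work with the stopped process $\Delta'_t := \Delta_{t \wedge \tau}$, whose increments satisfy $\Delta'_{t+1} - \Delta'_t = (\Delta_{t+1}-\Delta_t)\,1_{\tau > t}$. Dropping the non-positive $-\mu\alpha_{h,t}\Delta_t\,1_{\tau > t}$ term in the recursion yields $\E[\Delta'_{t+1} - \Delta'_t \mid \F_t] \leq 2L\xi_{h,3}/(\mu^2(t+\theta_h)^2)$. Summing from $0$ to $T_h-1$ and using the standard tail estimate $\sum_{t \geq 0}(t+\theta_h)^{-2} \leq 1/(\theta_h-1)$ combined with $\theta_h/(\theta_h-1) \leq 2$ yields $\E[\Delta'_{T_h}] \leq 3\widetilde{J}_h(K_{h,0})$. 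Since $\Delta_\tau > 10\zeta^{-1}\widetilde{J}_h(K_{h,0})$ on $\{\tau \leq T_h\}$ by the definition of $\tau$, Markov's inequality gives
\[
\frac{10\,\widetilde{J}_h(K_{h,0})}{\zeta}\,\PP(\tau \leq T_h) \;\leq\; \E[\Delta_\tau\,1_{\tau \leq T_h}] \;\leq\; \E[\Delta'_{T_h}] \;\leq\; 3\,\widetilde{J}_h(K_{h,0}),
\]
hence $\PP(\tau \leq T_h) \leq 3\zeta/10$.

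The main obstacle is the consistent bookkeeping of the stopping time throughout: the second-moment bound on $\widehat{\nabla}\widetilde{J}_h$ in Lemma~\ref{lem: grad_est_bounds} only holds on $\{K_{h,t} \in \G_h\} = \{\tau > t\}$, so the recursion carries an indicator that must be tracked carefully in both arguments. The induction operates on the indicator-weighted gap (so that the $-\mu\alpha_{h,t}\Delta_t$ descent contribution is used in full), while the stopped-process view conversely discards this descent term in order to obtain a genuine submartingale-with-summable-noise structure whose expectation can be uniformly controlled. Once both pieces are in place, the stated numerical constants fall out directly from the choices of $T_h$ and $\theta_h$.
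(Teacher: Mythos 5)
Your proof is correct, and while the first claim is handled essentially as in the paper, your argument for the probability bound takes a genuinely different route. For the first claim you run the same induction on the indicator-weighted gap $\E[\Delta_t 1_{\tau>t}]$ using the descent recursion of Lemma~\ref{lem: policy gradient recursive}; your induction target has denominator $t+\theta_h-1$ rather than the paper's $t+\theta_h$, but the verification reduces to the same condition $\theta_h \widetilde{J}_h(K_{h,0}) \geq 2L\xi_{h,3}/\mu^2$ and the evaluation at $T_h$ gives the identical constant. For the second claim, the paper constructs the explicit nonnegative supermartingale $Y_t = \Delta_{t\wedge\tau} + \tfrac{4L\xi_{h,3}}{\mu^2}\tfrac{1}{t+\theta_h}$ and invokes Ville's maximal inequality, whereas you discard the descent term to get a summable drift bound on the stopped process, telescope to obtain $\E[\Delta_{T_h\wedge\tau}] \leq 3\widetilde{J}_h(K_{h,0})$, and then exploit the fact that the stopped process freezes at the value $\Delta_\tau > 10\zeta^{-1}\widetilde{J}_h(K_{h,0})$ on $\{\tau \leq T_h\}$, so a plain Markov inequality at the terminal time suffices. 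Both yield exactly $\tfrac{3}{10}\zeta$ (your bound $\widetilde{J}_h(K_{h,0}) + 2\widetilde{J}_h(K_{h,0})$ mirrors the paper's $\E[Y_0]$); your version is somewhat more elementary in that it avoids verifying the supermartingale property and the maximal inequality, relying instead on the observation that a stopped process needs no maximal inequality because its terminal value already records the exceedance. The bookkeeping you flag --- that the second-moment bound on the estimator is only available on $\{\tau>t\}$ and that $\Delta_t \geq 0$ is needed both to drop the descent term and to pass from $\E[\Delta_\tau 1_{\tau\leq T_h}]$ to $\E[\Delta_{T_h\wedge\tau}]$ --- is handled correctly.
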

\begin{proof}
    We dedicate the following sublemma to prove the first claim.     \begin{sublemma}\label{sublem: lqr_pg_inductive}
        Under the parameter setup of Theorem~\ref{thm: policy_gradient}, we have that
        \[
        \E[\Delta_t 1_{\tau > t}] \leq \frac{\theta_h \widetilde{J}_h (K_{h,0})}{t+\theta_h},
        \]
        for all $t \in [T_h]$.
    \end{sublemma}
    \emph{Proof of Sublemma~\ref{sublem: lqr_pg_inductive}.} We prove this result by induction on $t$ as follows:
    
    \textbf{Base case ($t = 0$):} 
    \begin{align*}
        \Delta_0 1_{\tau > 0} \leq \Delta_0 \leq \widetilde{J}_h (K_{h,0}) = \frac{\theta_h \widetilde{J}_h (K_{h,0})}{0 + \theta_h},
    \end{align*}
    which after taking expectation proves the claim for $t = 0$.
    
    \textbf{Inductive step:} Let $k \in [T_h-1]$ be fixed and assume that
    \begin{equation}
        \E[\Delta_k 1_{\tau > k}] \leq \frac{\theta_h \widetilde{J}_h (K_{h,0})}{k+\theta_h} \label{eq: sublem_pg_hyp}
    \end{equation}
    holds (the inductive hypothesis). Observe that
    \begin{align}
        \E[\Delta_{k+1} 1_{\tau > k+1}] &\overset{\mathrm{(i)}}{\leq} \E [\Delta_{k+1} 1_{\tau > k} ] \cr
        &= \E [\E[\Delta_{k+1} 1_{\tau > k} | \F_k]] \cr
        &\overset{\mathrm{(ii)}}{=} \E [\E[\Delta_{k+1} | \F_k] 1_{\tau > k}], \label{eq: sublem_pg_conditional_expectation_bound}
    \end{align}
    where (i) comes from $1_{\tau > k+1} \leq 1_{\tau > k}$ and (ii) from the fact that $1_{\tau > k}$ is determined by $\F_k$. By Lemma~\ref{lem: policy gradient recursive}, we have that
    \begin{align}
        \E[\Delta_{k+1} | \F_k] 1_{\tau > k} &\leq \left( \left(1 - \mu \alpha_k \right) \Delta_k + \frac{L \alpha_k^2}{2} \xi_{h,3} \right) 1_{\tau > k} \cr
        &\overset{\mathrm{(i)}}{=} \left( 1 - \frac{2}{k+\theta_h} \right) \Delta_k 1_{\tau > k} + \frac{2 L \xi_{h,3}}{\mu^2} \left( \frac{1}{k+\theta_h} \right)^2, \label{eq: induction prelim 1}
    \end{align}
    where (i) comes from replacing $\alpha_k$ with its value in Theorem~\ref{thm: policy_gradient} along with the fact that $1_{\tau > k} \leq 1$. Now taking an expectation on~\eqref{eq: induction prelim 1} and combining it with~\eqref{eq: sublem_pg_conditional_expectation_bound} yields
    \begin{align}
        \E[\Delta_{k+1} 1_{\tau > k+1}] &\leq \left( 1 - \frac{2}{k+\theta_h} \right) \E[\Delta_k 1_{\tau > k}] + \frac{2 L \xi_{h,3}}{\mu^2} \left( \frac{1}{k+\theta_h} \right)^2 \cr
        &\overset{\mathrm{(i)}}{\leq} \left( 1 - \frac{2}{k+\theta_h} \right) \frac{\theta_h \widetilde{J}_h (K_{h,0})}{k+\theta_h} + \frac{2 L \xi_{h,3}}{\mu^2} \left( \frac{1}{k+\theta_h} \right)^2 \cr
        &= \left( 1 - \frac{1}{k+\theta_h} \right) \frac{\theta_h \widetilde{J}_h (K_{h,0})}{k+\theta_h} - \frac{1}{(k+\theta_h)^2} \left( \theta_h \widetilde{J}_h (K_{h,0}) - \frac{2 L \xi_{h,3}}{\mu^2} \right) \cr
        &\overset{\mathrm{(ii)}}{\leq} \frac{k+\theta_h-1}{(k+\theta_h)^2} \theta_h \widetilde{J}_h (K_{h,0}) \cr
        &\leq \frac{1}{k+\theta_h+1} \theta_h \widetilde{J}_h (K_{h,0}),
    \end{align}
    where (i) comes from the induction hypothesis~\eqref{eq: sublem_pg_hyp}, and (ii) from 
    \[
    \theta_h \widetilde{J}_h (K_{h,0}) - \frac{2 L \xi_{h,3}}{\mu^2} \geq 0,
    \]
    which is due to the choice of $\theta_h$ in Theorem~\ref{thm: policy_gradient}. This proves the claim for $k+1$, completing the inductive step. \oprocend

    Now utilizing Sublemma~\ref{sublem: lqr_pg_inductive} along with the choice of $T_h$ in Theorem~\ref{thm: policy_gradient}, we have
    \[
    \E[\Delta_{T_h} 1_{\tau > T_h}] \leq \frac{\theta_h \widetilde{J}_h (K_{h,0})}{T_h + \theta_h} \leq \frac{\theta_h \widetilde{J}_h (K_{h,0})}{T_h} \leq \frac{7 \mu \varsigma^2 \zeta}{40},
    \]
    concluding the proof of the first claim of Proposition~\ref{prop: lqr_policy}. Moving on to the second claim, we start by introducing the stopped process
    \begin{equation}
        Y_{t} := \Delta_{t \wedge \tau} + \frac{4 L \xi_{h,3}}{\mu^2} \frac{1}{t+\theta_h}. \label{eq: y_t supermartingale def}
    \end{equation}
    We now show this process is a supermartingale. First, observe that
    \begin{align}
        \E[Y_{t+1} | \F_t] &= \E [\Delta_{t+1 \wedge \tau} | \F_t] + \frac{4 L \xi_{h,3}}{\mu^2} \frac{1}{t+\theta_h+1} \cr
        &= \E [\Delta_{t+1 \wedge \tau} (1_{\tau \leq t} + 1_{\tau > t}) | \F_t] + \frac{4 L \xi_{h,3}}{\mu^2} \frac{1}{t+\theta_h+1} \cr
        &= \E [\Delta_{t+1 \wedge \tau} 1_{\tau \leq t} | \F_t] + \E [\Delta_{t+1 \wedge \tau} | \F_t] 1_{\tau > t} + \frac{4 L \xi_{h,3}}{\mu^2} \frac{1}{t+\theta_h+1}. \label{eq: supermartingale_prelim_1}
    \end{align}
    Now note that for the first term of the right-hand side of~\eqref{eq: supermartingale_prelim_1}, it holds that
    \begin{align}
        \E [\Delta_{t+1 \wedge \tau} 1_{\tau \leq t} | \F_t] \overset{\mathrm{(i)}}{=} \E [\Delta_{t \wedge \tau} 1_{\tau \leq t} | \F_t] = \Delta_{t \wedge \tau} 1_{\tau \leq t}, \label{eq: supermartingale_prelim_2}
    \end{align}
    where (i) follows from the fact that under the event $\{ \tau \leq t \}$, we have $\Delta_{t+1 \wedge \tau} = \Delta_{t \wedge \tau}$. Moreover, for the second term of the right-hand side of~\eqref{eq: supermartingale_prelim_1}, we have that
    \begin{align}
        \E [\Delta_{t+1 \wedge \tau} | \F_t] 1_{\tau > t} &\overset{\mathrm{(i)}}{\leq} \left( 1 - \frac{2}{t+\theta_h} \right) \Delta_t 1_{\tau > t} + \frac{2 L \xi_{h,3}}{\mu^2} \left( \frac{1}{t+\theta_h} \right)^2 1_{\tau > t} \cr
        &\leq \Delta_t 1_{\tau > t} + \frac{2 L \xi_{h,3}}{\mu^2} \left( \frac{1}{t+\theta_h} \right)^2, \label{eq: supermartingale_prelim_3}
    \end{align}
    where (i) follows from Lemma~\ref{lem: policy gradient recursive}. Combining \eqref{eq: supermartingale_prelim_2} and \eqref{eq: supermartingale_prelim_3} with \eqref{eq: supermartingale_prelim_1}, we get
    \begin{align}
        \E[Y_{t+1} | \F_t] &\leq \Delta_{t \wedge \tau} 1_{\tau \leq t} + \Delta_t 1_{\tau > t} + \frac{2 L \xi_{h,3}}{\mu^2} \left( \frac{1}{t+\theta_h} \right)^2 + \frac{4 L \xi_{h,3}}{\mu^2} \frac{1}{t+\theta_h+1} \cr
        &= \Delta_{t \wedge \tau} + \frac{2 L \xi_{h,3}}{\mu^2} \left( \frac{1}{(t+\theta_h)^2} + \frac{2}{t+\theta_h+1} \right) \cr
        &\overset{\mathrm{(i)}}{\leq} \Delta_{t \wedge \tau} + \frac{2 L \xi_{h,3}}{\mu^2} \left( \frac{2}{t+\theta_h} \right) \cr
        &= Y_t,
    \end{align}
    where (i) follows from $\theta_h \geq 2$ under parameter choice of Theorem~\ref{thm: policy_gradient}. This finishes the proof of $Y_t$ being a supermartingale. Now note that
    \begin{align*}
        \PP \{ \tau \leq T_h \} &= \PP \left\{\max_{t \in [T_h]} \Delta_{t} > 10 \zeta^{-1} \widetilde{J}_h (K_{h,0}) \right\} \cr
        &\leq \PP \left\{\max_{t \in [T_h]} \Delta_{t \wedge \tau} > 10 \zeta^{-1} \widetilde{J}_h (K_{h,0}) \right\} \cr
        &\overset{\mathrm{(i)}}{\leq} \PP \left\{\max_{t \in [T_h]} Y_t \geq 10 \zeta^{-1} \widetilde{J}_h (K_{h,0}) \right\} 
\end{align*}    
where (i) follows from the fact that $Y_t \geq \Delta_{t \wedge \tau}$. Using Doob/Ville's inequality for supermartingales, we have that 
\begin{align*}
        \PP \{ \tau \leq T_h \}&{\leq} \frac{\zeta \E[Y_0]}{10 \widetilde{J}_h (K_{h,0})} \cr
        &= \frac{\zeta \left(\Delta_0 + \frac{4 L \xi_{h,3}}{\mu^2} \frac{1}{\theta_h} \right)}{10 \widetilde{J}_h (K_{h,0})}.
\end{align*}
Using the choice of $\theta_h$ in Theorem~\ref{thm: policy_gradient}, we have that 
\begin{align}
        \PP \{ \tau \leq T_h \}&{\leq} \frac{\zeta \left(\widetilde{J}_h (K_{h,0}) + 2 \widetilde{J}_h (K_{h,0}) \right)}{10 \widetilde{J}_h (K_{h,0})} \cr
        &= \frac{3}{10} \zeta.
    \end{align}
     This verifies the second claim of Proposition~\ref{prop: lqr_policy}, concluding the proof.
\end{proof}

With this in mind, the proof of Theorem~\ref{thm: policy_gradient} is straightforward:

\textit{Proof of Theorem~\ref{thm: policy_gradient}:}
    We now employ Proposition~\ref{prop: lqr_policy} to validate the claims of Theorem~\ref{thm: policy_gradient}. Note that 
\begin{align*}
    \PP \left\{ \Delta_{T_h} \geq \frac{\mu}{4} \varsigma^2 \right\} &\leq \PP \left\{ \Delta_{T_h} 1_{\tau > T_h} \geq \frac{\mu}{4} \varsigma^2 \right\} + \PP \left\{ 1_{\tau \leq T_h} = 1 \right\} \\
    &\overset{\mathrm{(i)}}{\leq} \frac{4}{\mu \varsigma^2} \E[\Delta_{T_h} 1_{\tau > T_h}] + \PP \left\{ \tau \leq T_h \right\} \\
    &\overset{\mathrm{(ii)}}{\leq} \frac{7}{10} \zeta + \frac{3}{10} \zeta \\
    &= \zeta,
\end{align*}
where (i) follows from applying Markov's inequality on the first claim of Proposition~\ref{prop: lqr_policy}, and (ii) comes directly from the second claim of Proposition~\ref{prop: lqr_policy}. Finally, we utilize the $\frac{\mu}{2}$-strong convexity of $\widetilde{J}_h$, along with $\nabla \widetilde{J}_h (\widetilde{K}^*_h) = 0$ to write
\begin{align*}
    \widetilde{J}_h (K_{h,T_h}) - \widetilde{J}_h (\widetilde{K}^*_h) &\geq \nabla \widetilde{J}_h (\widetilde{K}^*_h)^\top (K_{h,T_h} - \widetilde{K}^*_h) + \frac{\mu}{4} \| K_{h,T_h} - \widetilde{K}^*_h \|_F^2 \\
    &= \frac{\mu}{4} \| K_{h,T_h} - \widetilde{K}^*_h \|_F^2,
\end{align*}
and hence,
\begin{align*}
    \| K_{h,T_h} - \widetilde{K}^*_h \|_F^2 \leq \frac{4}{\mu} \left(\widetilde{J}_h (K_{h,T_h}) - \widetilde{J}_h (\widetilde{K}^*_h)\right) \leq \varsigma^2,
\end{align*}
with a probability of at least $1-\zeta$, finishing the proof. \oprocend
\section{Sample complexity}
We now utilize our results on inner loop and outer loop to provide sample complexity bounds. To wit, combining Theorems~\ref{thm: policy_gradient} and~\ref{thm: LQR_DP_by_delta}, along with applying the union bound on the probabilities of failure at each step, we provide the following result.
\begin{corollary}\label{cor: overall_result}
    Suppose Assumption~\ref{ass: A_invertible} holds, and choose
    \begin{align*}
        N = \frac{1}{2}\cdot \frac{\log\big(\frac{2\|Q_N-P^*\|_*\cdot\kappa_{P^*}\cdot \|A_K^*\|\cdot\|B\|} {\eps\cdot\lambda_{\min}(R)}\big)}{\log\big(\frac{1}{\|A_K^*\|_*}\big)} + 1,
    \end{align*}
    where $Q_N \succeq P^*$. Moreover, for each \(h \in \{0, 1, \dotsc, N-1\}\), let \(\varsigma_{h,\eps}\) be as defined in~\eqref{eq: def_varsigma_h}. Then Algorithm~\ref{alg:RHPG-RL} with the parameters as suggested in Theorem~\ref{thm: policy_gradient}, i.e., 
    \begin{align*}
        \alpha_{h,t} = \frac{2}{\mu} \frac{1}{t+\theta_h} \quad \text{for} \quad \theta_h = \max \{2,\frac{2 L \xi_{h,3}}{\mu^2 \widetilde{J}_h (K_{h,0})}\},
    \end{align*}
    and
    \[
    T_h = \frac{40}{7 \mu \varsigma_{h,\eps}^2 \zeta} \theta_h \widetilde{J}_h (K_{h,0}),
    \]
    outputs a control policy $\widetilde{K}_0$ that satisfies $\| \widetilde{K}_0 - K^* \| \leq \eps$ with a probability of at least $1 - N \zeta$. Additionally, for a sufficiently small $\eps$ such that $\eps < \frac{1 - \|A - BK^* \|_*}{\kappa_{P^*}^{1/2} \ \|B\|}$, $\widetilde{K}_0$ is stabilizing.
\end{corollary}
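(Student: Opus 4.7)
The plan is to assemble Corollary~\ref{cor: overall_result} as a direct consequence of Theorem~\ref{thm: policy_gradient} (inner loop) and Theorem~\ref{thm: LQR_DP_by_delta} (outer loop), bridged by a union bound over the $N$ stages. The choice of $N$ already matches the hypothesis of Theorem~\ref{thm: LQR_DP_by_delta} verbatim, and Assumption~\ref{ass: A_invertible} is assumed; so the work reduces to verifying, stage by stage, that the inner loop delivers policies within the tolerances $\varsigma_{h,\eps}$ that the outer-loop analysis demands.

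First, I would fix $h \in \{N-1,\dotsc,0\}$ and invoke Theorem~\ref{thm: policy_gradient} with error tolerance $\varsigma = \varsigma_{h,\eps}$. With the step-size $\alpha_{h,t}$ and iteration count $T_h$ as stated, the theorem yields
\[
\| K_{h,T_h} - \widetilde{K}^*_h \|_F \leq \varsigma_{h,\eps}
\]
with probability at least $1 - \zeta$. Since the spectral norm is dominated by the Frobenius norm, this gives $\| \widetilde{K}_h - \widetilde{K}^*_h \| \leq \varsigma_{h,\eps}$, exactly the hypothesis of Theorem~\ref{thm: LQR_DP_by_delta} at stage $h$. Next, I would apply a union bound across $h = N-1, \dotsc, 0$: the event that \emph{every} inner-loop call succeeds occurs with probability at least $1 - N\zeta$. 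Conditional on this event, Theorem~\ref{thm: LQR_DP_by_delta} directly produces $\| \widetilde{K}_0 - K^* \| \leq \eps$, and the stability statement under $\eps < (1 - \|A-BK^*\|_*)/(\kappa_{P^*}^{1/2} \|B\|)$ is inherited verbatim from the same theorem.

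The one real subtlety — and the main obstacle — is that Theorem~\ref{thm: policy_gradient} presupposes the Riccati-matrix perturbation bound $\| \widetilde{P}_{h+1} - P^*_{h+1} \| \leq a$. I would handle this by a backward induction on $h$: the base case $h = N-1$ is immediate from $\widetilde{P}_N = P^*_N = Q_N$, and the inductive step requires showing that if $\widetilde{K}_{h+1}$ is within $\varsigma_{h+1,\eps}$ of $\widetilde{K}^*_{h+1}$ (and prior stages are similarly controlled), then the propagation through the Lyapunov recursion~\eqref{eq: lqr_lyapunov} keeps $\| \widetilde{P}_h - P^*_h \| \leq a$. The tolerances $\varsigma_{h,\eps}$ in~\eqref{eq: def_varsigma_h} are precisely engineered for this — in particular, the factors $\sqrt{a/(C_3 N)}$ and $\sqrt{a \cdot a / (2 e N C_3 \| Q_N \|)}$ control the per-stage increment in spectral-norm perturbation so that the accumulated deviation across $N$ stages stays below $a$. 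This inductive propagation argument is carried out inside the proof of Theorem~\ref{thm: LQR_DP_by_delta} (Appendix~\ref{app: thm_LQR_DP_by_Delta}); for the corollary it suffices to cite that the same tolerance schedule simultaneously validates the Theorem~\ref{thm: policy_gradient} hypothesis at every step.

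Putting the pieces together, on the good event of joint inner-loop success (probability $\geq 1 - N\zeta$), the sequence $\{\widetilde{K}_h\}$ satisfies the per-step tolerances needed by Theorem~\ref{thm: LQR_DP_by_delta}, which then certifies both $\| \widetilde{K}_0 - K^* \| \leq \eps$ and, for sufficiently small $\eps$, the stability of $\widetilde{K}_0$. This yields the claimed guarantee without any additional computation beyond combining the two main theorems with a union bound.
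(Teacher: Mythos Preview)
Your proposal is correct and follows essentially the same approach as the paper, which proves the corollary by simply ``combining Theorems~\ref{thm: policy_gradient} and~\ref{thm: LQR_DP_by_delta}, along with applying the union bound on the probabilities of failure at each step.'' You are in fact more careful than the paper in making explicit the backward-induction needed to verify the hypothesis $\|\widetilde P_{h+1}-P^*_{h+1}\|\le a$ of Theorem~\ref{thm: policy_gradient} at each stage; this is indeed established inside the proof of Theorem~\ref{thm: LQR_DP_by_delta} (via Lemma~\ref{lem: delta_uniform_upper_induction}, which shows~\eqref{eq: p_diff_leq_a}), so your citation is accurate.
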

The results in Corollary~\ref{cor: overall_result} provide a rigorous theoretical foundation for Algorithm~\ref{alg:RHPG-RL}, ensuring it computes a control policy \( \widetilde{K}_0 \) satisfying \( \| \widetilde{K}_0 - K^* \| \leq \eps \) with high probability. The following corollary formalizes the sample complexity bound of our approach.

\begin{corollary}\longthmtitle{Main result: complexity bound}
Under Assumption~\ref{ass: A_invertible}, Algorithm~\ref{alg:RHPG-RL} achieves a sample complexity bound of at most  
\[
\sum_{h=0}^{N-1} T_h = \widetilde{\mathcal{O}}\left(\eps^{-2}\right).
\]
\end{corollary}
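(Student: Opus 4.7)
The plan is to substitute the per-stage formula $T_h = \frac{40}{7\mu\varsigma_{h,\eps}^2 \zeta}\theta_h\widetilde{J}_h(K_{h,0})$ into the sum and track the $\eps$-dependence stage by stage, using $N = \mathcal{O}(\log(1/\eps))$ from~\eqref{eq: N_value_choice} to absorb subleading terms. First I would isolate the $\eps$-dependence: the factor $\theta_h\widetilde{J}_h(K_{h,0}) = \max\{2\widetilde{J}_h(K_{h,0}),\, 2L\xi_{h,3}/\mu^2\}$ is controlled by problem constants, the exploration variance $\sigma^2$, and the finite-horizon cost at the warm-start $K_{h,0}$; treating these as parameters absorbed by $\widetilde{\mathcal{O}}$, the $\eps$-scaling of $T_h$ is driven entirely by $1/\varsigma_{h,\eps}^2$.

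Next I would split the sum at $h=0$ and analyze the two pieces separately. For $h=0$, the definition~\eqref{eq: def_varsigma_h} gives $\varsigma_{0,\eps}=\eps/4$, so $T_0 = \widetilde{\mathcal{O}}(\eps^{-2})$. For $h\geq 1$, I would examine the four candidate terms inside the $\min$: their radicands scale as $1/N$, $1/N$, $\eps/N$, and $\eps$ respectively, so for small $\eps$ (and $N>1$) the binding one is $\sqrt{a\eps/(8eNC_1C_3\|Q_N\|)}$, yielding $\varsigma_{h,\eps}^2 = \Theta(\eps/N)$ and hence $T_h = \widetilde{\mathcal{O}}(N/\eps)$ uniformly in $h\geq 1$.

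Combining the two parts,
\[
\sum_{h=0}^{N-1} T_h \;=\; T_0 + \sum_{h=1}^{N-1} T_h \;=\; \widetilde{\mathcal{O}}(\eps^{-2}) + (N-1)\cdot \widetilde{\mathcal{O}}(N\eps^{-1}) \;=\; \widetilde{\mathcal{O}}(\eps^{-2}) + \widetilde{\mathcal{O}}(\eps^{-1}\log^2(1/\eps)) \;=\; \widetilde{\mathcal{O}}(\eps^{-2}),
\]
so the total is dominated by the $h=0$ stage, where the tolerance $\varsigma_{0,\eps}$ is already $\Theta(\eps)$ rather than $\Theta(\sqrt{\eps/N})$.

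The main obstacle is verifying that the prefactors $\theta_h\widetilde{J}_h(K_{h,0})$ are indeed (poly)logarithmic in $1/\eps$ uniformly across $h$. Since $K_{h,0}$ need not stabilize the system, a crude bound on the finite-horizon cost $\widetilde{J}_h(K_{h,0})$ could scale like $\|A\|^{\Theta(N)} = (1/\eps)^{\Theta(\log\|A\|/|\log\|A_K^*\|_*|)}$, which is polynomial—not polylogarithmic—in $1/\eps$. Making the $\widetilde{\mathcal{O}}$ claim rigorous requires either exploiting the warm-start $K_{h,0}=\widetilde{K}_{h+1}$ inherited from the previous outer iteration (which, being close to $\widetilde{K}^*_{h+1}$, yields a cost bounded by problem-dependent constants independent of $\eps$), or folding such exponential-in-$N$ factors into the problem-specific constants hidden by $\widetilde{\mathcal{O}}$ in the convention made explicit in the introduction (``independent of problem-specific constants in the exponent''). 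A secondary cosmetic point is the confidence parameter: applying a union bound to achieve success probability $1-\delta$ requires $\zeta=\delta/N$, which only contributes an additional $\log(1/\eps)$ factor to each $T_h$ and is again absorbed by $\widetilde{\mathcal{O}}$.
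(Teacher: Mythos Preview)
Your argument is correct and matches the paper's intended derivation; the paper states this corollary without an explicit proof, treating it as an immediate consequence of Corollary~\ref{cor: overall_result}, the definition of $\varsigma_{h,\eps}$ in~\eqref{eq: def_varsigma_h}, and the logarithmic scaling $N=\mathcal{O}(\log(1/\eps))$ from~\eqref{eq: N_value_choice}. Your split into $h=0$ (contributing $\widetilde{\mathcal{O}}(\eps^{-2})$) and $h\geq 1$ (each contributing $\widetilde{\mathcal{O}}(N/\eps)$, summing to $\widetilde{\mathcal{O}}(N^2/\eps)$) is exactly the natural route.

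On the obstacle you flag concerning $\theta_h\widetilde{J}_h(K_{h,0})$: it is less severe than you fear, and neither the warm-start nor an ``exponential-in-$N$ constant'' escape hatch is needed. From~\eqref{eq: J_tilde_init2},
\[
\widetilde{J}_h(K_{h,0}) = \tr\!\Big(\big(Q + K_{h,0}^\top R K_{h,0} + (A-BK_{h,0})^\top \widetilde{P}_{h+1}(A-BK_{h,0})\big)\Sigma_0\Big),
\]
and the outer-loop induction underlying Theorem~\ref{thm: LQR_DP_by_delta} (Lemma~\ref{lem: delta_uniform_upper_induction}) guarantees $\|\widetilde{P}_{h+1}-P^*_{h+1}\|\le a$ on the high-probability event, hence $\|\widetilde{P}_{h+1}\|\le \|Q_N\|+a$. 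Thus for any bounded initialization (e.g., $K_{h,0}=0$), $\widetilde{J}_h(K_{h,0})$ is bounded by a problem constant independent of $N$ and $\eps$; the same reasoning bounds $\widetilde{C}_h$ and therefore $\xi_{h,3}$ and $\theta_h$. The crude $\|A\|^{\Theta(N)}$ blow-up never materializes precisely because the downstream policies $\widetilde{K}_{h+1},\dotsc,\widetilde{K}_{N-1}$ have already been controlled by the preceding outer-loop steps, which is encoded in $\widetilde{P}_{h+1}$ rather than in an unrolled product of $(N-h)$ transition matrices.
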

It is worth comparing this result with the one in~\cite{XZ-TB:23}, 
taking into account the necessary adjustments ala Theorem~\ref{thm: LQR_DP}, where error accumulation results in a worse sample complexity bound. 
\begin{corollary}\longthmtitle{Prior Result: Complexity Bound}
Algorithm~1 in~\cite{XZ-TB:23} achieves the sample complexity bound of at most 
\[
\sum_{h=0}^{N-1} T'_h = \widetilde{\mathcal{O}}\left(\max \left\{\eps^{-2}, \eps^{- \left( 1 + \frac{\log(C_2)}{2\log{(1/\| A^*_K \|_*})} \right)} \right\}\right),
\]  
where \( T'_h \) denotes the counterpart of \( T_h \) in~\cite{XZ-TB:23}.
\end{corollary}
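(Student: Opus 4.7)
My plan is to combine the per-stage tolerance requirement~\eqref{eq: refined_ineq_h_case} from Theorem~\ref{thm: LQR_DP} with the inner-loop iteration complexity, which scales as $T_h = \widetilde{\mathcal O}(\varsigma^{-2})$ for a $\varsigma$-accurate policy estimate (this rate holds both for the two-point estimator in~\cite{XZ-TB:23} and for our one-point variant from Theorem~\ref{thm: policy_gradient}). Substituting the binding tolerance at each stage $h$ and summing over $h\in\{0,\dots,N-1\}$ then gives $\sum_h T'_h$, which I would then re-express as a power of $\eps^{-1}$ using the choice of $N$ in~\eqref{eq: N value choice}.

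Concretely: for $h=0$ the tolerance is $\eps/4$, yielding $T'_0=\widetilde{\mathcal O}(\eps^{-2})$; for $h=1$ the binding constraint scales as $\varsigma\propto\sqrt{\eps}$, yielding $T'_1=\widetilde{\mathcal O}(\eps^{-1})$; and for $h\geq 2$ the binding term in~\eqref{eq: refined_ineq_h_case} is $\tfrac{1}{2}\sqrt{\eps/(C_1 C_2^{h-2} C_3)}$, so $\varsigma^2\sim\eps/C_2^{h-2}$ and $T'_h=\widetilde{\mathcal O}(C_2^{h-2}/\eps)$. Summing, the geometric tail dominates when $C_2>1$, giving
\[
\sum_{h=0}^{N-1} T'_h \;=\; \widetilde{\mathcal O}\!\left(\max\bigl\{\eps^{-2},\;C_2^{N-2}/\eps\bigr\}\right).
\]
(When $C_2\leq 1$ the sum is $\mathcal O(N/\eps)$, which, since $N$ is polylogarithmic in $\eps^{-1}$, is absorbed into $\widetilde{\mathcal O}(\eps^{-1})$ and in turn dominated by $\widetilde{\mathcal O}(\eps^{-2})$, so the $\max$ form remains valid.)

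The final step would be to translate $C_2^{N-2}$ into an explicit $\eps$-power. From~\eqref{eq: N value choice}, up to $\eps$-independent quantities, $N-2=\tfrac{\log(\Theta(1/\eps))}{2\log(1/\|A_K^*\|_*)}$, so
\[
C_2^{N-2}\;=\;\exp\!\bigl((N-2)\log C_2\bigr)\;=\;\Theta\!\left(\eps^{-\log C_2/(2\log(1/\|A_K^*\|_*))}\right),
\]
which produces the claimed exponent $1+\log C_2/(2\log(1/\|A_K^*\|_*))$ on $\eps^{-1}$ and completes the bound. The main obstacle is less conceptual than bookkeeping: one must verify that the problem-dependent multipliers $\theta_h\widetilde{J}_h(K_{h,0})$ hidden inside $T_h$, the stability-related normalizations, and the polylog factors in $\eps$ are all genuinely $\eps$-independent (modulo logarithms) so they can be safely absorbed into $\widetilde{\mathcal O}$, and that the geometric-sum collapse is sharp precisely in the regime $C_2>1$ where the prior bound is strictly worse than ours.
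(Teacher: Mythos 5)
Your proposal is correct and matches the paper's own argument in all essentials: the paper likewise identifies the binding stage-$(N\!-\!1)$ tolerance $\sqrt{\eps/(C_1 C_2^{N-2} C_3)}$ from~\eqref{eq: refined_ineq_h_case}, invokes the $\widetilde{\mathcal O}(\varsigma^{-2})$ inner-loop cost, and converts $C_2^{N-2}$ into the $\eps$-power via~\eqref{eq: N value choice}. The only cosmetic difference is that you sum the geometric series over stages while the paper takes the maximum over $h$ and absorbs the factor $N$ into the polylogarithm; both yield the same bound.
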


This comparison highlights the advantage of our method, which achieves a uniform sample complexity bound of \( \widetilde{\mathcal{O}}(\eps^{-2}) \), independent of problem-specific constants. In contrast, the bound in~\cite{XZ-TB:23} deteriorates as \( C_2 \) increases, since their second term scales as  
\[
\widetilde{\mathcal{O}}\left(\eps^{-\left(1 + \frac{\log(C_2)}{2\log(1/\| A^*_K \|_*)} \right)}\right).
\]
This can be \textit{arbitrarily worse} than \( \widetilde{\mathcal{O}}(\eps^{-2}) \), leading to much higher sample complexity in some cases.

Finally, to validate these theoretical guarantees and assess the algorithm’s empirical performance, we conduct simulation studies on a standard example from~\cite{XZ-TB:23}. The setup and results are presented in the following section.

\section{Simulation Studies}
\begin{figure}
    \centering
    \begin{subfigure}[b]{0.48\columnwidth}
        \includegraphics[width=\linewidth]{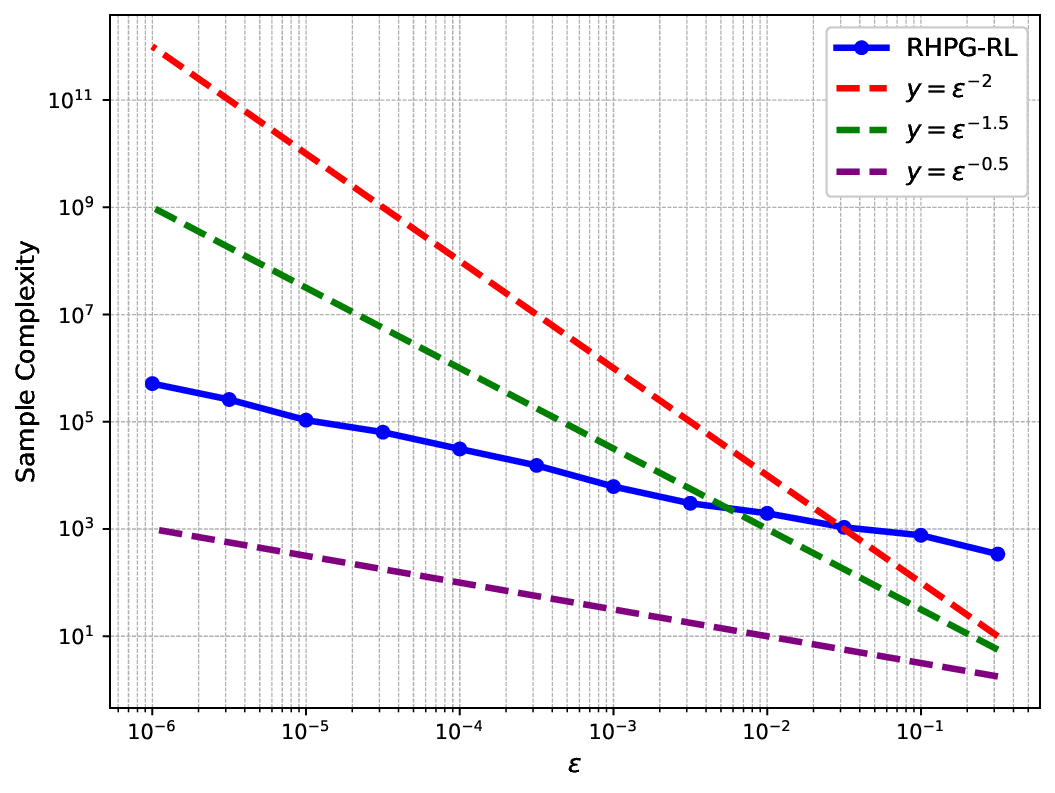}
        \caption{Average number of calls to the zeroth-order oracle.}
        \label{fig:sample_complexity}
    \end{subfigure}
    \hfill
    \begin{subfigure}[b]{0.48\columnwidth}
        \includegraphics[width=\linewidth]{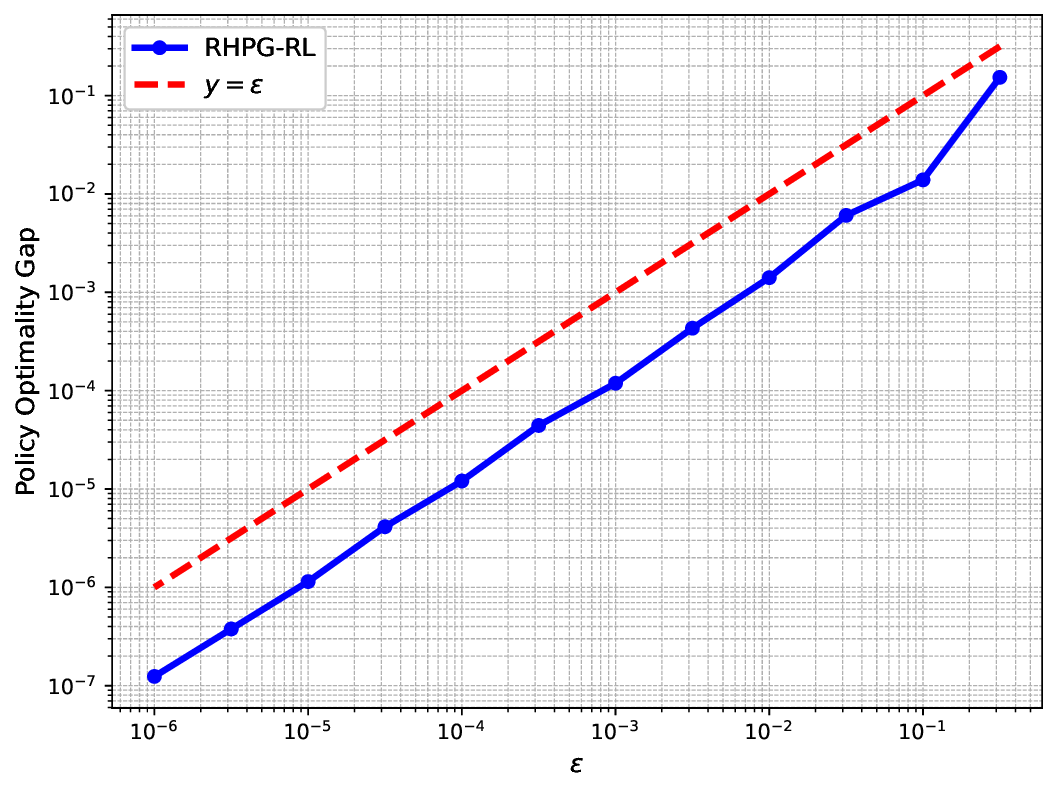}
        \caption{Policy optimality gap $\| \widetilde{K}_0 - K^* \|$.}
        \label{fig:optimality_gap}
    \end{subfigure}
    \caption{Simulation results showing sample complexity and policy optimality gap.}
    \label{fig:simulation}
\end{figure}
For comparison, we demonstrate our results on the example provided in~\cite{XZ-TB:23}, where $A = 5$,
$B = 0.33$, $Q=R=1$, and the optimal policy is $K^* = 14.5482$ with $P^* = 221.4271$. In this example, we select $Q_N = 300 \succeq P^*$, in alignment with a minor inherent assumption discussed later in Remark~\ref{rem: Q_N inherent_assumption} (Appendix~\ref{app: RDE converges to ARE}). Additionally, we initialize our policy at each step $h$ of the outer loop of Algorithm~\ref{alg:RHPG-RL} as $K_{h,0} = 0$.
This choice contrasts with~\cite{MF-RG-SK-MM:18,DM-AP-KB-KK-PLB-MJW:20}, which require stable policies for initialization, as the stable policies for this example lie in the set
\[
\mathcal{K} = \{ K \mid 12.12 < K < 18.18 \}.
\]
We set $N = \lceil \frac{1}{2}\log\left(\frac{1}{\eps}\right) \rceil$, consistent with~\eqref{eq: N value choice}, and in each inner loop, apply the policy gradient (PG) update outlined in Algorithm~\ref{alg:RHPG-RL} using a time-varying step-size as suggested in~\eqref{eq: step-size value choice}. The algorithm is run for twelve different values of \( \eps \): \( \eps \in \{10^{-6}, 10^{-5.5}, 10^{-5}, \dotsc, 10^{-0.5} \} \), with the results shown in Figure~\ref{fig:simulation}. To account for the inherent randomness in the algorithm, we perform one hundred independent runs for each value of $\eps$ and compute the average sample complexity and policy optimality gap $\| \widetilde{K}_0 - K^* \|$. As seen in Figure~\ref{fig:simulation}, the sample complexity exhibits a slope consistent with $\mathcal{O}(\eps^{-0.5})$, visibly outperforming the method in~\cite{XZ-TB:23}, which, as shown in~\cite[Figure~2]{XZ-TB:23}, demonstrates a much steeper slope of approximately $\mathcal{O}(\eps^{-1.5})$.

\section{Conclusion}
In this paper, we introduced a novel approach to solving the model-free LQR problem, inspired by policy gradient methods, particularly REINFORCE. Our algorithm eliminates the restrictive requirement of starting with a stable initial policy, making it applicable in scenarios where obtaining such a policy is challenging. Furthermore, it removes the reliance on two-point gradient estimation, enhancing practical applicability while maintaining similar rates.

Beyond these improvements, we introduced a refined outer-loop analysis that significantly enhances error accumulation, leveraging the contraction of the Riccati operator under the Riemannian distance. This ensures that the accumulated error remains linear in the horizon length, leading to a sample complexity bound of $\widetilde{\mathcal{O}}(\eps^{-2})$, independent of problem-specific constants, making the method more broadly applicable.

We provide a rigorous theoretical analysis, establishing that the algorithm achieves convergence to the optimal policy with competitive sample complexity bounds. Importantly, our numerical simulations reveal performance that surpasses these theoretical guarantees, with the algorithm consistently outperforming prior methods that rely on two-point gradient estimates. This superior performance, combined with a more practical framework, highlights the potential of the proposed method for solving control problems in a model-free setting. Future directions include extensions to nonlinear and partially observed systems, as well as robustness enhancements.


\appendix

\section{Proof of Theorem~\ref{thm: RDE converges to ARE}} \label{app: RDE converges to ARE}
We let
\begin{align*}
    &\overline{P}_t := P^*_t- P^*\hspace{-0.2em}, \quad  \overline{R} := R + B^{\top}P^*B, \\
     &\overline{A} := A - B\overline{R}^{-1}B^{\top}P^*A,
\end{align*}
and we have
\begin{align}
    \overline{P}_{t} &= \overline{A}^{\top}\overline{P}_{t+1}\overline{A} - \overline{A}^{\top}\hspace{-0.1em}\overline{P}_{t+1}B(\overline{R} + B^{\hspace{-0.1em}\top}\hspace{-0.1em}\overline{P}_{t+1}B)^{\hspace{-0.1em}-1}\hspace{-0.1em}B^{\hspace{-0.1em}\top}\hspace{-0.1em}\overline{P}_{t+1}\overline{A} \nonumber\\
    &= \overline{A}^{\top}\overline{P}_{t+1}^{1/2}\big[I + \overline{P}^{1/2}_{t+1}B\overline{R}^{-1}B^{\top}\overline{P}_{t+1}^{1/2}\big]^{-1}\overline{P}_{t+1}^{1/2}\overline{A} \nonumber\\
    &\leq \hspace{-0.1em}\big[1\hspace{-0.15em}+\hspace{-0.15em}\lambda_{\min}(\overline{P}^{1/2}_{t+1}B\overline{R}^{-1}B^{\top}\overline{P}_{t+1}^{1/2})\big]^{-1}\hspace{-0.1em}\overline{A}^{\top}\overline{P}_{t+1}\overline{A} \nonumber\\
    &=:\hspace{-0.1em} \mu_{t} \overline{A}^{\top}\hspace{-0.2em}\overline{P}_{t+1}\overline{A},\label{eqn:RDE_conv_step2}
\end{align}
where $\overline{P}_{t+1}^{1/2}$ denotes the unique positive semi-definite (psd) square root of the psd matrix $\overline{P}_{t+1}$, $0 < \mu_t \leq 1$ for all $t$, and $\overline{A}$ satisfies $\rho(\overline{A}) < 1$. We now use $\|\cdot\|_*$ to represent the $P^*$-induced matrix norm and invoke Theorem 14.4.1 of \cite{BH-AHS-TK:99}, where our $\overline{P}_t$, $\overline{A}^{\top}$ and $P^*$ correspond to $P_i - P^*$, $F_p$ and $W$ in \cite{BH-AHS-TK:99}, respectively. By Theorem 14.4.1 of \cite{BH-AHS-TK:99} and \eqref{eqn:RDE_conv_step2}, we obtain $\|\overline{A}\|_* < 1$ and given that $\mu_t \leq 1$, 
\begin{align*}
    \|\overline{P}_{t}\|_* \leq \|\overline{A}\|^2_* \cdot \|\overline{P}_{t+1}\|_*.
\end{align*}
Therefore, the convergence is exponential such that $\|\overline{P}_t\|_* \leq \|\overline{A}\|_*^{2(N-t)}\cdot \|\overline{P}_{N}\|_*$. As a result, the convergence of $\overline{P}_t$ to $0$ in spectral norm can be characterized as
\begin{align*}
    \|\overline{P}_t\| \leq \kappa_{P^*}\cdot \|\overline{P}_t\|_* \leq \kappa_{P^*}\cdot\|\overline{A}\|_*^{2(N-t)}\cdot \|\overline{P}_{N}\|_*,
\end{align*}
where we have used $\kappa_X$ to denote the condition number of $X$. That is, to ensure $\|\overline{P}_1\| \leq \eps$, it suffices to require
\begin{align}\label{eqn:required_time}
    N \geq \frac{1}{2}\cdot \frac{\log\big(\frac{\|\overline{P}_N\|_*\cdot \kappa_{P^*}}{\eps}\big)}{\log\big(\frac{1}{\|\overline{A}\|_*}\big)} + 1.
\end{align}
Lastly, we show that the (monotonic) convergence of $K^*_t$ to $K^*$ follows from the convergence of $P^*_t$ to $P^*$. This can be verified through:
\begin{align}
    K^*_t - K^* &= (R+B^{\top}P^*_{t+1}B)^{-1}B^{\top}P^*_{t+1}A - (R+B^{\top}P^*B)^{-1}B^{\top}P^*A \nonumber\\
    &= \big[(R+B^{\top}P^*_{t+1}B)^{-1}-(R+B^{\top}P^*B)^{-1}\big]B^{\top}P^*A + (R+B^{\top}P^*_{t+1}B)^{-1}B^{\top}(P^*_{t+1}-P^*)A \nonumber\\
    &= (R+B^{\top}P^*_{t+1}B)^{-1}B^{\top}(P^*-P^*_{t+1})BK^* -(R+B^{\top}P^*_{t+1}B)^{-1}B^{\top}(P^*-P^*_{t+1})A\nonumber\\
    &=(R+B^{\top}P^*_{t+1}B)^{-1}B^{\top}(P^*-P^*_{t+1})(BK^*-A). \label{eqn:kdiff}
\end{align}
Hence, we have $\|K^*_t - K^*\| \leq \frac{\|\overline{A}\|\cdot \|B\|}{\lambda_{\min}(R)}\cdot \|P^*_{t+1} - P^*\|$ and
\begin{align*}
    \|K^*_0 - K^*\| \leq \frac{\|\overline{A}\|\cdot \|B\|}{\lambda_{\min}(R)}\cdot \|\overline{P}_1\|.
\end{align*}
Substituting $\eps$ in \eqref{eqn:required_time} with $\frac{\eps\cdot\lambda_{\min}(R)}{\|\overline{A}\|\cdot\|B\|}$ completes the proof.
\begin{remark} \label{rem: Q_N inherent_assumption}
    Note that since Theorem~\ref{thm: RDE converges to ARE} requires $\overline{P}_t = P^*_t - P^*$ to be positive definite for each $t$, it implies that we have access to a $P^*_N = Q_N$ that satisfies $Q_N \succeq P^*$ so that due to the monotonic convergence of~\eqref{eq: RDE}, it will hold that
    \[
    P^*_N \succeq P^*_{N-1} \succeq \cdots \succeq P^*_0 \succeq P^*,
    \]
    satisfying said requirement.
\end{remark}

\section{Proof of Theorem~\ref{thm: LQR_DP_by_delta}} \label{app: thm_LQR_DP_by_Delta}
We start the proof by providing some preliminary results.
\begin{lemma} \label{lem: delta_upper_lower}
    Let $U$ and $V$ be two positive definie matrices. It holds that
    \begin{equation}
    \|U-V\| \le \|V\|\, e^{\delta(U,V)}\, \delta(U,V). \label{eq: delta_lower}
    \end{equation}
    Furthermore, if 
    \begin{equation}
        \|V^{-1}\|\,\|U-V\| < 1, \label{eq: delta_upper_assumption}
    \end{equation}
    then we have
    \begin{equation}
   \delta(U,V) \le \frac{\|V^{-1}\|\,\|U-V\|_F}{1-\|V^{-1}\|\,\|U-V\|}. \label{eq: delta_upper}
   \end{equation}
\end{lemma}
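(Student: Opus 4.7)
The plan is to reduce both inequalities to scalar statements about the eigenvalues of the single matrix $M := V^{-1/2} U V^{-1/2}$. Since $M$ is positive definite with eigenvalues $\lambda_i(UV^{-1})$, the Riemannian distance can be rewritten as $\delta(U,V) = \|\log M\|_F$, where $\log M$ is taken in the eigenbasis. Moreover, by factoring $U - V = V^{1/2}(M-I)V^{1/2}$, bounds on $\|U-V\|$ (or $\|U-V\|_F$) translate directly into bounds on $\|M-I\|$ (or $\|M-I\|_F$) via submultiplicativity.

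For the first inequality \eqref{eq: delta_lower}, I would write $\|U-V\| \le \|V^{1/2}\|^2 \|M-I\| = \|V\|\,\|M-I\|$ and then control $\|M-I\|$ spectrally. Each eigenvalue satisfies $\lambda_i - 1 = e^{\log\lambda_i} - 1$, and the elementary scalar inequality $|e^x - 1| \le |x|\, e^{|x|}$ (easily verified by the Taylor series for $x \ge 0$ and a direct estimate for $x < 0$) gives $|\lambda_i - 1| \le |\log\lambda_i|\, e^{|\log\lambda_i|}$. Since $|\log\lambda_i|$ is at most the $\ell_2$ norm of the vector of log-eigenvalues, i.e.\ $|\log\lambda_i| \le \delta(U,V)$, one obtains $\|M-I\| \le \delta(U,V) e^{\delta(U,V)}$, which yields the claim after multiplying by $\|V\|$.

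For the second inequality \eqref{eq: delta_upper}, under the hypothesis \eqref{eq: delta_upper_assumption} I would first show that all eigenvalues of $M-I$ lie in $(-1,1)$: indeed $\|M-I\| = \|V^{-1/2}(U-V)V^{-1/2}\| \le \|V^{-1}\|\,\|U-V\| < 1$. This legitimizes the Taylor expansion $\log(1+x) = \sum_{k\ge 1} (-1)^{k+1}x^k/k$, from which the crude bound $|\log(1+x)| \le |x|/(1-|x|)$ for $|x|<1$ follows by replacing the alternating series by the geometric one. Applying this to each $\lambda_i$ gives $|\log\lambda_i| \le |\lambda_i - 1|/(1-\|M-I\|)$. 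Squaring, summing over $i$, and using $\sum_i (\lambda_i - 1)^2 = \|M-I\|_F^2$ yields $\delta(U,V) \le \|M-I\|_F/(1 - \|M-I\|)$. The numerator is then bounded by the mixed norm inequality $\|V^{-1/2}(U-V)V^{-1/2}\|_F \le \|V^{-1/2}\|^2 \|U-V\|_F = \|V^{-1}\|\,\|U-V\|_F$, and the denominator is upper-bounded using $\|M-I\| \le \|V^{-1}\|\,\|U-V\|$, giving the claimed inequality.

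I do not anticipate any real obstacle here: the argument is essentially a careful bookkeeping of two elementary scalar inequalities, $|e^x - 1| \le |x| e^{|x|}$ and $|\log(1+x)| \le |x|/(1-|x|)$, combined with the standard congruence $M = V^{-1/2}UV^{-1/2}$ that converts the problem into one about the spectrum of $M$. The only mild subtlety is being careful about which norm (spectral or Frobenius) is used at each step, so that the final bound in \eqref{eq: delta_upper} has exactly the stated mix of $\|\cdot\|$ in the denominator and $\|\cdot\|_F$ in the numerator.
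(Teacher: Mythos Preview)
Your proposal is correct and follows essentially the same route as the paper: both introduce $M=V^{-1/2}UV^{-1/2}$, identify $\delta(U,V)=\|\log M\|_F$, factor $U-V=V^{1/2}(M-I)V^{1/2}$, and then invoke the scalar inequalities $|e^{x}-1|\le |x|e^{|x|}$ and $|\log(1+x)|\le |x|/(1-|x|)$. The only cosmetic difference is that the paper bounds $\|\exp(Z)-I\|$ and $\|\log(I+X)\|_F$ via matrix power series, whereas you diagonalize and apply the scalar inequalities eigenvalue-by-eigenvalue; since $M$ is symmetric positive definite, these are equivalent arguments.
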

\begin{proof}
    First, since \(U\) and \(V\) are positive definite, we have that \(V^{-1/2} U V^{-1/2}\)  is positive definite, and therefore has a logarithm; so we let
    \[
    Z := \log(V^{-1/2} U V^{-1/2}),
    \]
    and hence, we can write
    \[
    U = V^{1/2} \exp(Z) V^{1/2}.
    \]
    The eigenvalues of \(Z\) are precisely the logarithms of the eigenvalues of \(UV^{-1}\) due to $UV^{-1}$ and $V^{-1/2} U V^{-1/2}$ being similar. As a result,
    \[
    \delta(U,V) = \|Z\|_F.
    \]
    We now write
    \[
    U - V = V^{1/2}\exp(Z)V^{1/2} - V = V^{1/2}(\exp(Z)-I)V^{1/2},
    \]
    and thus,
    \begin{equation}
    \|U-V\| \le \|V\|\, \|\exp(Z)-I\|. \label{eq: delta_lower_prelim1}
    \end{equation}
    Since $e^{x}-1\leq  x e^{x}$ whenever $x \geq 0$, we also have for any matrix $Z$, by consider the expansion of $e^Z$: 
    \[
    \|\exp(Z)-I\| \leq e^{||Z||}  - 1\le e^{\|Z\|}\, \|Z\|.
    \]
    Since the spectral norm is always bounded by the Frobenius norm, we have:
    \[
    \|\exp(Z)-I\| \le e^{\|Z\|_F}\, \|Z\|_F.
    \]
    Finally, recalling that \(\|Z\|_F = \delta(U,V)\), this becomes:
    \[
    \|\exp(Z)-I\| \le e^{\delta(U,V)}\, \delta(U,V),
    \]
    which after substituting into~\eqref{eq: delta_lower_prelim1} yields:
    \[
    \|U-V\| \le \|V\|\, e^{\delta(U,V)}\, \delta(U,V),
    \]
    concluding the proof of the first claim. We now move on to the second claim. As before, we write 
    \[
    \delta(U,V)=\|\log(V^{-1/2}UV^{-1/2})\|_F.
    \]
    We now define
    \[
    X := V^{-1/2}(U-V)V^{-1/2},
    \]
    so that
    \[
    V^{-1/2}UV^{-1/2} = I + X.
    \]
    Moreover, following~\eqref{eq: delta_upper_assumption},
    \[
    \| X \| = \|V^{-1/2}(U-V)V^{-1/2}\| \leq \| V^{-1} \| \| U - V \| < 1,
    \]
    and hence, one can use the series expansion of the logarithm
    \[
    \log(I+X)=X - \frac{1}{2}X^2 + \cdots,
    \]
    to show
    \begin{align}
        \|\log(I+X)\|_F &= \left\| \sum_{k=1}^{\infty} \frac{(-1)^{k+1}}{k} X^k \right\|_F \cr
        &\leq \sum_{k=1}^{\infty} \| X^k \|_F \cr
        &\leq \sum_{k=1}^{\infty} \| X \|_F \| X^{k-1} \| \cr
        &\leq \| X \|_F \sum_{k=0}^{\infty} \| X \|^{k} \cr
        &= \frac{\|X\|_F}{1-\|X\|}.
    \end{align}
    As a result, we have
    \begin{align*}
    \delta(U,V) &= \|\log(I+X)\|_F \\
    &\le \frac{\|X\|_F}{1-\|X\|} \\
    &= \frac{\|V^{-1/2}(U-V)V^{-1/2}\|_F}{1-\|V^{-1/2}(U-V)V^{-1/2}\|} \\
    &\leq \frac{\|V^{-1}\| \| (U-V) \|_F}{1-\|V^{-1}\| \| (U-V) \|},
    \end{align*}
    finishing the proof.
\end{proof}

Building on Lemma~\ref{lem: delta_upper_lower}, we proceed to state the following result regarding the LQR setting.
\begin{lemma} \label{lem: delta_uniform_upper_induction}
    Let $t \in \{1,2,\dotsc,N-1\}$, select \( Q_N \succeq Q \), and suppose Assumption~\ref{ass: A_invertible} holds. Additionally, assume that for all \( t' \in \{ t+1, t+2, \dotsc, N\} \), we have 
    \begin{align}
        &\| P^*_{t'} - \widetilde{P}_{t'} \| \leq a, \quad \text{and} \label{eq: a_condition_t'} \\
        &\delta(\widetilde{P}^*_{t'}, \widetilde{P}_{t'} ) \leq \varepsilon, \label{eq: delta_upper_condition_t'}
    \end{align}
    where \( \varepsilon \) satisfies
    \begin{equation}
    \varepsilon \leq \frac{1}{N} \min \left\{ \frac{a}{2 e \| Q_N \|}, 1 \right\}. \label{eq: varepsilon_condition}
    \end{equation}
    If
    \begin{equation}
    \| \widetilde{K}_t - \widetilde{K}^*_t \|_F \leq \sqrt{\frac{a}{C_3} \varepsilon}, \label{eq: K_diff_condition_induction}
    \end{equation}
    then the following bounds hold:
    \begin{align}
        &\| P^*_t - \widetilde{P}_t \| \leq a, \quad \text{and} \label{eq: a_condition} \\
        &\delta(\widetilde{P}^*_t, \widetilde{P}_t ) \leq \varepsilon. \label{eq: delta_upper_condition}
    \end{align}
\end{lemma}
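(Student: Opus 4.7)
The two conclusions can be tackled in different ways: the $\delta$--bound follows from a \emph{local} perturbation argument at step $t$, while the spectral--norm bound requires combining this local estimate with the \emph{accumulated} Riemannian error carried forward from the induction hypothesis for $t'\ge t+1$.

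For the $\delta$--bound, the plan is to exploit the standard Gauss--Newton/Hewer identity
\[
\widetilde{P}_t-\widetilde{P}^*_t \;=\; (\widetilde{K}_t-\widetilde{K}^*_t)^{\!\top}\bigl(R+B^{\!\top}\widetilde{P}_{t+1}B\bigr)(\widetilde{K}_t-\widetilde{K}^*_t),
\]
which I would derive by completing the square using the explicit formula $\widetilde{K}^*_t=(R+B^{\!\top}\widetilde{P}_{t+1}B)^{-1}B^{\!\top}\widetilde{P}_{t+1}A$. Under $\|\widetilde{P}_{t+1}-P^*_{t+1}\|\le a$ and $P^*_{t+1}\preceq Q_N$, the middle factor is bounded in operator norm by $C_3/2$, hence the submultiplicativity of the Frobenius norm combined with~\eqref{eq: K_diff_condition_induction} gives $\|\widetilde{P}_t-\widetilde{P}^*_t\|_F\le a\varepsilon/2$. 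Because $\widetilde{P}_t\succeq Q$ and $\sigma_{\min}(Q)=2a$, the spectral bound $\|\widetilde{P}_t^{-1}\|\le 1/(2a)$ makes the small-perturbation hypothesis~\eqref{eq: delta_upper_assumption} of Lemma~\ref{lem: delta_upper_lower} hold with margin (the product equals $\varepsilon/4<1$), and the upper bound there yields $\delta(\widetilde{P}^*_t,\widetilde{P}_t)\le \frac{\varepsilon/4}{1-\varepsilon/4}\le \varepsilon$, which is~\eqref{eq: delta_upper_condition}.

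For~\eqref{eq: a_condition} I would split by triangle inequality,
\[
\|P^*_t-\widetilde{P}_t\|\;\le\;\|P^*_t-\widetilde{P}^*_t\|+\|\widetilde{P}^*_t-\widetilde{P}_t\|,
\]
control the second term by $a\varepsilon/2$ as above, and attack the first term via the Riemannian distance. Since $\widetilde{P}^*_t=\mathcal{R}(\widetilde{P}_{t+1})$ and $P^*_t=\mathcal{R}(P^*_{t+1})$, Lemma~\ref{lem: delta_Riccati_contraction} gives $\delta(P^*_t,\widetilde{P}^*_t)\le\delta(P^*_{t+1},\widetilde{P}_{t+1})$. Chaining the triangle inequality in $\delta$ with repeated applications of Riccati non-expansiveness and the induction hypothesis $\delta(\widetilde{P}^*_{t'},\widetilde{P}_{t'})\le\varepsilon$ for $t'\ge t+1$, together with the terminal identity $\widetilde{P}_N=P^*_N=Q_N$ that annihilates the chain, yields $\delta(P^*_t,\widetilde{P}^*_t)\le (N-t-1)\varepsilon\le N\varepsilon\le 1$. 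Then the lower bound part of Lemma~\ref{lem: delta_upper_lower}, applied with $V=P^*_t\preceq Q_N$, converts this into $\|P^*_t-\widetilde{P}^*_t\|\le \|Q_N\|\,e\cdot N\varepsilon\le a/2$ by the hypothesis~\eqref{eq: varepsilon_condition}. Combining, $\|P^*_t-\widetilde{P}_t\|\le a/2+a\varepsilon/2\le a$.

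The main technical obstacle is the $\|P^*_t-\widetilde{P}^*_t\|$ bound: a naive triangle chain in the spectral norm accumulates multiplicative factors of $\|A-BK\|^2$ and leads to the exponential blow-up flagged in Theorem~\ref{thm: LQR_DP}. The point of switching to $\delta$ is precisely that the Riccati operator is a $1$-Lipschitz contraction there, so the accumulated error is purely additive (linear in the horizon) and only gets converted back to a spectral bound once, at the very end, through Lemma~\ref{lem: delta_upper_lower}. Keeping careful track of which matrix plays the role of $V$ in that lemma, so that $\|V\|\le\|Q_N\|$ is available without circularity, is the subtle point I would be most careful about when writing out the proof in detail.
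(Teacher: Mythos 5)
Your proposal is correct and follows essentially the same route as the paper: the completion-of-squares identity bounding $\|\widetilde{P}^*_t-\widetilde{P}_t\|$ by $\tfrac{C_3}{2}\|\widetilde{K}_t-\widetilde{K}^*_t\|_F^2$, the telescoping chain of Riccati non-expansiveness plus the triangle inequality in $\delta$ terminating at $\widetilde{P}_N=P^*_N=Q_N$, and the two directions of Lemma~\ref{lem: delta_upper_lower} to pass between $\delta$ and the spectral norm. The only (harmless) deviation is that you lower-bound $\sigma_{\min}(\widetilde{P}_t)$ by $2a$ directly from $\widetilde{P}_t\succeq Q$ rather than via the freshly established $\|P^*_t-\widetilde{P}_t\|\le a$ as the paper does, which decouples the $\delta$-bound from the spectral bound and changes nothing substantive.
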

\begin{proof}
    Before we move on to the proof, we esablish some preliminary results. First, note that since 
    \[ P^*_N = \widetilde{P}_N = Q_N \succeq Q \succ 0,\] 
    due to the monotonic convergence of~\eqref{eq: RDE} to $P^* \succeq Q$ (see~\cite{BH-AHS-TK:99}), we have that $P^*_t \succeq Q$ for all $t \in \{ 1,2,\dotsc,N\}$. Therefore, it holds that
    \begin{equation}
    \sigma_{\min} (P^*_t) \geq \sigma_{\min} (Q) = 2a > 0. \label{eq: sigma_min_P_star_t_lowerbound}
    \end{equation}
    Moreover, due to~\eqref{eq: a_condition_t'}, we have
    \begin{equation}
        \widetilde{P}_{t'} \succeq P^*_{t'} - aI \succeq a I \succ 0 \label{eq: sigma_min_P_tilde_star_t'_lowerbound}
    \end{equation}
    for all \( t' \in \{ t+1, t+2, \dotsc, N\} \). Now since~\eqref{eq: sigma_min_P_star_t_lowerbound}, \eqref{eq: sigma_min_P_tilde_star_t'_lowerbound}, and Assumption~\ref{ass: A_invertible} all hold, we can apply Lemma~\ref{lem: delta_Riccati_contraction} to show that for all \( t' \in \{ t+1, t+2, \dotsc, N\} \),
    \begin{align}
    \delta (P_{t'-1}^*, \widetilde{P}_{t'-1}^*) &\overset{\mathrm{(i)}}{=}
    \delta(\mathcal{R}(P^*_{t'}),\mathcal{R}(\widetilde{P}_{t'})) \cr
    &\leq \delta(P^*_{t'}, \widetilde{P}_{t'}), \label{eq: delta_contraction_t'}
    \end{align}
    where (i) follows from~\eqref{eq: Riccati_operator_on_P_tilde_star} and~\eqref{eq: Riccati_operator_on_P_star}. Following~\eqref{eq: delta_contraction_t'}, we can now write
    \begin{align}
        \delta(P^*_t, \widetilde{P}^*_t) &\leq \delta(P^*_{t+1}, \widetilde{P}_{t+1}) \cr
        &\overset{\mathrm{(i)}}{\leq} \delta(P^*_{t+1}, \widetilde{P}^*_{t+1}) + \delta(\widetilde{P}^*_{t+1}, \widetilde{P}_{t+1}) \cr
        &\leq \delta(P^*_{t+2}, \widetilde{P}_{t+2}) + \delta(\widetilde{P}^*_{t+1}, \widetilde{P}_{t+1}) \cr
        &\leq \delta(P^*_{t+2}, \widetilde{P}^*_{t+2}) + \delta(\widetilde{P}^*_{t+2}, \widetilde{P}_{t+2}) + \delta(\widetilde{P}^*_{t+1}, \widetilde{P}_{t+1}) \cr
        &\leq \cdots \cr
        &\leq \delta(P^*_{N}, \widetilde{P}_{N}) + \sum_{k=1}^{N-t-1} \delta(\widetilde{P}^*_{t+k}, \widetilde{P}_{t+k}) \cr
        &\overset{\mathrm{(ii)}}{=} \sum_{k=1}^{N-t-1} \delta(\widetilde{P}^*_{t+k}, \widetilde{P}_{t+k}) \cr
        &\overset{\mathrm{(iii)}}{\leq} \varepsilon N, \label{eq: geometric_sum}
    \end{align}
    where (i) is due to the triangle inequality of the Riemannian distance~\cite{RB:07}, (ii) follows from $P^*_{N} = \widetilde{P}_{N} = Q_N$, and (iii) from~\eqref{eq: delta_upper_condition_t'}.
    We now start the proof of~\eqref{eq: a_condition} by writing
    \begin{align}
        \| P^*_t - \widetilde{P}_t \| \leq \| P^*_t - \widetilde{P}^*_t \| + \| \widetilde{P}^*_t - \widetilde{P}_t \|, \label{eq: P_diff_total}
    \end{align}
    and trying to provide a bound for both terms of the right-hand side of~\eqref{eq: P_diff_total}. For the first term, we have
    \begin{align}
        \| P^*_t - \widetilde{P}^*_t \| &\overset{\mathrm{(i)}}{\leq} \| Q_N \| e^{\delta (P^*_t, \widetilde{P}^*_t)} \delta (P^*_t, \widetilde{P}^*_t) \cr
        &\overset{\mathrm{(ii)}}{\leq} \| Q_N \| e^{\varepsilon N} \varepsilon N \cr
        &\overset{\mathrm{(iii)}}{\leq} \frac{a}{2}, \label{eq: P_diff_1}
    \end{align}
    where (i) follows from~\eqref{eq: delta_lower}, (ii) from~\eqref{eq: geometric_sum}, and (iii) from the condition on $\varepsilon$ in~\eqref{eq: varepsilon_condition}. As for the second term on the right-hand side of~\eqref{eq: P_diff_total}, we can write
    \begin{align}
        \widetilde{P}^*_{t} - \widetilde{P}_{t} &= (A-B\widetilde{K}^*_t)^{\top}\widetilde{P}_{t+1}(A-B\widetilde{K}^*_t) + (\widetilde{K}^*_t)^{\top}R\widetilde{K}^*_t - (A-B\widetilde{K}_t)^{\top}\widetilde{P}_{t+1}(A-B\widetilde{K}_t) - (\widetilde{K}_t)^{\top}R\widetilde{K}_t \nonumber\\
        &= - (\widetilde{K}^*_t)^{\hspace{-0.1em}\top}\hspace{-0.1em}B^{\hspace{-0.1em}\top}\hspace{-0.1em}\widetilde{P}_{t+1}A \hspace{-0.1em}-\hspace{-0.1em} A^{\hspace{-0.1em}\top}\hspace{-0.1em}\widetilde{P}_{t+1}B\widetilde{K}_t^* \hspace{-0.1em}+\hspace{-0.1em} (\widetilde{K}^*_t)^{\hspace{-0.1em}\top}\hspace{-0.1em}(R+B^{\hspace{-0.1em}\top}\widetilde{P}_{t+1}B)\widetilde{K}^*_t   \nonumber\\
        &\hspace{1em} + \widetilde{K}_t^{\top}B^{\top}\widetilde{P}_{t+1}A + A^{\top}\widetilde{P}_{t+1}B\widetilde{K}_t - \widetilde{K}_t^{\top}(R+B^{\top}\widetilde{P}_{t+1}B)\widetilde{K}_t \nonumber\\
        &\overset{\mathrm{(i)}}{=} \big[(R+B^{\top}\widetilde{P}_{t+1}B)^{-1}B^{\top}\widetilde{P}_{t+1}A - \widetilde{K}^*_t\big]^{\top}(R+B^{\top}\widetilde{P}_{t+1}B) \big[(R+B^{\top}\widetilde{P}_{t+1}B)^{-1}B^{\top}\widetilde{P}_{t+1}A - \widetilde{K}^*_t \big] \nonumber\\
        &\hspace{1em}- \big[(R+B^{\top}\widetilde{P}_{t+1}B)^{-1}B^{\top}\widetilde{P}_{t+1}A - \widetilde{K}_t\big]^{\top}(R+B^{\top}\widetilde{P}_{t+1}B) \big[(R+B^{\top}\widetilde{P}_{t+1}B)^{-1}B^{\top}\widetilde{P}_{t+1}A - \widetilde{K}_t \big] \nonumber \\
        &= \big[\widetilde{K}^*_t - \widetilde{K}^*_t\big]^{\top}(R+B^{\top}\widetilde{P}_{t+1}B) \big[\widetilde{K}^*_t - \widetilde{K}^*_t \big] - \big[\widetilde{K}^*_t - \widetilde{K}_t\big]^{\top}(R+B^{\top}\widetilde{P}_{t+1}B) \big[\widetilde{K}^*_t - \widetilde{K}_t \big] \nonumber \\
        &= - \big[\widetilde{K}^*_t - \widetilde{K}_t\big]^{\top}(R+B^{\top}\widetilde{P}_{t+1}B) \big[\widetilde{K}^*_t - \widetilde{K}_t \big], \label{eq: P_diff_2_prelim}
    \end{align}
    where (i) follows from completion of squares. Combining~\eqref{eq: P_diff_2_prelim} and~\eqref{eq: a_condition_t'}, we have
    \begin{align}
        \| \widetilde{P}^*_{t} - \widetilde{P}_{t} \| &\leq \| R+B^{\top} (Q_N + a) B \| \| \widetilde{K}^*_t - \widetilde{K}_t \|^2 \label{eq: delta_upper_numerator_prelim} \\
        &\leq \frac{C_3}{2} \| \widetilde{K}^*_t - \widetilde{K}_t \|^2 \cr
        &\leq \left( \frac{C_3}{2} \right) \left( \frac{a}{C_3} \varepsilon \right) \cr
        &\leq \frac{a}{2}. \label{eq: P_diff_2}
    \end{align}
    Finally, substituting~\eqref{eq: P_diff_1} and~\eqref{eq: P_diff_2} in~\eqref{eq: P_diff_total}, we have
    \[
    \| P^*_t - \widetilde{P}_t \| \leq \frac{a}{2} + \frac{a}{2} = a,
    \]
    finishing the proof of~\eqref{eq: a_condition}. Having established this, we proceed to prove~\eqref{eq: delta_upper_condition}. Note that similar to~\eqref{eq: delta_upper_numerator_prelim}, we can write
    \begin{align}
        \| \widetilde{P}^*_{t} - \widetilde{P}_{t} \|_F &\leq \| R+B^{\top} (Q_N + a I) B \| \| \widetilde{K}^*_t - \widetilde{K}_t \|_F^2 \cr
        &\leq \frac{C_3}{2} \| \widetilde{K}^*_t - \widetilde{K}_t \|_F^2. \label{eq: delta_upper_numerator}
    \end{align}
    Moreover, due to~\eqref{eq: a_condition}, we have that $\widetilde{P}_t \succeq P^*_t - aI$, and hence,
    \begin{align}
        \sigma_{\min} (\widetilde{P}_t) \geq \sigma_{\min} (P^*_t) - a \overset{\mathrm{(i)}}{\geq} a, \label{eq: P_tilde_t_sigma_min_lower}
    \end{align}
    where (i) follows from~\eqref{eq: sigma_min_P_star_t_lowerbound}. Combining~\eqref{eq: P_tilde_t_sigma_min_lower} and~\eqref{eq: P_diff_2}, we have
    \begin{align}
    \| \widetilde{P}_t^{-1} \| \| \widetilde{P}^*_{t} - \widetilde{P}_{t} \| &= \frac{\| \widetilde{P}^*_{t} - \widetilde{P}_{t} \|}{\sigma_{\min} (\widetilde{P}_t)} \cr
    &\leq \frac{a/2}{a} \cr
    &= \frac{1}{2}. \label{eq: delta_upper_condition_met_one_half}
    \end{align}
    Thus, the condition~\eqref{eq: delta_upper_assumption} of Lemma~\ref{lem: delta_upper_lower} is met, and we can utilize~\eqref{eq: delta_upper} to write
    \begin{align*}
        \delta(\widetilde{P}^*_{t}, \widetilde{P}_{t}) &\leq \frac{\| \widetilde{P}_t^{-1} \| \| \widetilde{P}^*_{t} - \widetilde{P}_{t} \|_F}{1 - \| \widetilde{P}_t^{-1} \| \| \widetilde{P}^*_{t} - \widetilde{P}_{t} \|} \\
        &\overset{\mathrm{(i)}}{\leq} \frac{(1/a) \| \widetilde{P}^*_{t} - \widetilde{P}_{t} \|_F}{(1/2)} \\
        &\overset{\mathrm{(ii)}}{\leq} \frac{C_3}{a} \| \widetilde{K}^*_t - \widetilde{K}_t \|_F^2 \\
        &\overset{\mathrm{(i)}}{\leq} \varepsilon,
    \end{align*}
    where (i) follows from~\eqref{eq: P_tilde_t_sigma_min_lower} and~\eqref{eq: delta_upper_condition_met_one_half}, (ii) from~\eqref{eq: delta_upper_numerator}, and (iii) from condition~\eqref{eq: K_diff_condition_induction}. This verifies~\eqref{eq: delta_upper_condition}, concluding the proof.
\end{proof}
Having established Lemma~\ref{lem: delta_uniform_upper_induction}, we can finally present the proof of~\ref{thm: LQR_DP_by_delta}.

\textit{Proof of Theorem~\ref{thm: LQR_DP_by_delta}:} 
First, according to Theorem~\ref{thm: RDE converges to ARE}, our choice of $N$ in~\eqref{eq: N_value_choice} ensures that $K^*_{0}$ is stabilizing and $\|K^*_{0} - K^*\| \leq \eps/2$. Then, it remains to show that the output $\widetilde{K}_0$ satisfies $\|\widetilde{K}_0 - K^*_0\| \leq \eps/2$. 

Now observe that
\begin{align*}
\|\widetilde{K}_{0} - K^*_{0}\| \leq \|\widetilde{K}^*_{0} - K^*_{0}\| + \|\widetilde{K}_0-\widetilde{K}_0^*\|,
\end{align*}
where substituting $K^*_t$ and $K^*$ in \eqref{eqn:kdiff}, respectively, with $\widetilde{K}^*_0$ and $K^*_0$ leads to
\begin{align*}
\widetilde{K}^*_{0} - K^*_{0} = (R+B^{\top}\widetilde{P}_{1}B)^{-1}B^{\top}(P^*_1-\widetilde{P}_{1})(BK^*_0-A).
\end{align*}
Hence, the error size $\|\widetilde{K}^*_{0} - K^*_{0}\|$ could be bounded by 
\begin{align}
\|\widetilde{K}^*_{0} - K^*_{0}\| \leq \frac{\|A-BK^*_0\|\cdot\|B\|}{\lambda_{\min}(R)}\cdot\|P^*_1-\widetilde{P}_{1}\| \label{eq: laststep_req}.
\end{align}
Next, since we have $\|\widetilde{K}_0-\widetilde{K}_0^*\| \leq \varsigma_{0,\eps} = \eps/4$, it suffices to show $\|\widetilde{K}^*_{0} - K^*_{0}\| \leq \eps/4$ to fulfill $\|\widetilde{K}_{0} - K^*_{0}\| \leq \eps/2$. 
Then, by \eqref{eq: laststep_req}, in order to satisfy $\|\widetilde{K}^*_{0} - K^*_{0}\| \leq \eps/4$, it remains to show
\begin{align}\label{eq: P1_req}
\|P^*_1-\widetilde{P}_{1}\| \leq \frac{\eps}{4 C_1}.
\end{align}
In order to show this, we first let
\begin{equation}
\varepsilon = \frac{1}{N} \min \left\{ \frac{\eps}{8 e C_1 \| Q_N \|}, \frac{a}{2 e \| Q_N \|}, 1 \right\}, \label{eq: varepsilon_value}
\end{equation}
which clearly satisfies~\eqref{eq: varepsilon_condition}. Now we want to show, by strong induction, that
\begin{align*}
    &\| P^*_t - \widetilde{P}_t \| \leq a, \quad \text{and} \\
    &\delta(\widetilde{P}^*_t, \widetilde{P}_t) \leq \varepsilon,
\end{align*}
for all $t \in \{N,N-1,\dotsc,1\}$. For the base case, we have
\[
P^*_N = \widetilde{P}^*_N = \widetilde{P}_N = Q_N,
\]
and hence, it immediately follows that
\begin{align*}
    &\| P^*_N - \widetilde{P}_N \| = 0 \leq a, \quad \text{and} \\
    &\delta(\widetilde{P}^*_N, \widetilde{P}_N) = 0 \leq \varepsilon.
\end{align*}
Now since it holds in the statement of Theorem~\ref{thm: LQR_DP_by_delta} that
\[
\| \widetilde{K}_h - \widetilde{K}^*_h \| \leq \varsigma_{h,\eps} \leq \sqrt{\frac{a}{C_3} \varepsilon},
\]
which satisfies~\eqref{eq: K_diff_condition_induction}, the inductive step follows directly from Lemma~\ref{lem: delta_uniform_upper_induction}. We have now succesfully established that
\begin{align}
    &\| P^*_t - \widetilde{P}_t \| \leq a, \quad \text{and} \label{eq: p_diff_leq_a}\\
    &\delta(\widetilde{P}^*_t, \widetilde{P}_t) \leq \varepsilon, \label{eq: delta_leq_varepsilon}
\end{align}
for all $t \in \{N,N-1,\dotsc,1\}$.
As a result, we have
\begin{align}
    \delta(P_1^*, \widetilde{P}_1^*) &\leq \delta(P_{2}^*, \widetilde{P}_{2}) \cr
    &\leq \delta(P_{2}^*, \widetilde{P}^*_{2}) + \delta(\widetilde{P}_{2}^*, \widetilde{P}_{2}) \cr
    &\leq \delta(P_{3}^*, \widetilde{P}_{3}) + \delta(\widetilde{P}_{2}^*, \widetilde{P}_{2}) \cr
    &\leq \delta(P_{3}^*, \widetilde{P}^*_{3}) + \delta(\widetilde{P}_{3}^*, \widetilde{P}_{3}) + \delta(\widetilde{P}_{2}^*, \widetilde{P}_{2}) \cr
    &\leq \cdots \cr
    &\leq \delta(P_{N}^*, \widetilde{P}_{N}) + \sum_{k=2}^{N-1} \delta(\widetilde{P}_{k}^*, \widetilde{P}_{k}) \cr
    &= \sum_{k=2}^{N-1} \delta(\widetilde{P}_{k}^*, \widetilde{P}_{t}) \cr
    &\leq \varepsilon N. \label{eq: geometric_sum_2}
\end{align}
We now show~\eqref{eq: P1_req} by writing
\begin{align}
    \| P^*_1 - \widetilde{P}_1 \| \leq \| P^*_1 - \widetilde{P}^*_1 \| + \| \widetilde{P}^*_1 - \widetilde{P}_1 \|, \label{eq: P1_diff_total}
\end{align}
and providing a bound for both terms of the right-hand side of~\eqref{eq: P1_diff_total}. For the first term, we have
\begin{align}
    \| P^*_1 - \widetilde{P}^*_1 \| &\overset{\mathrm{(i)}}{\leq} \| Q_N \| e^{\delta (P^*_1, \widetilde{P}^*_1)} \delta (P^*_1, \widetilde{P}^*_1) \cr
    &\overset{\mathrm{(ii)}}{\leq} \| Q_N \| e^{\varepsilon N} \varepsilon N \cr
    &\overset{\mathrm{(iii)}}{\leq} \frac{\eps}{8 C_1}, \label{eq: P1_diff_1}
\end{align}
where (i) follows from Lemma~\ref{lem: delta_upper_lower}, (ii) from~\eqref{eq: geometric_sum_2}, and (iii) from~\eqref{eq: varepsilon_value}. As for the second term on the right-hand side of~\eqref{eq: P1_diff_total}, we utilize~\eqref{eq: P_diff_2_prelim} to write
\begin{align}
    \| \widetilde{P}^*_1 - \widetilde{P}_1 \| &\leq \| R + B^{\top} \widetilde{P}_2 B \| \|\widetilde{K}_1-\widetilde{K}_1^*\|^2 \cr
    &\overset{\mathrm{(i)}}{\leq} \| R + B^{\top} (Q_N + aI) B \| (\varsigma_{1,\eps})^2 \cr
    &\overset{\mathrm{(ii)}}{\leq} \frac{C_3}{2} \frac{\eps}{4 C_1 C_3} \cr
    &= \frac{\eps}{8 C_1}, \label{eq: P1_diff_2}
\end{align}
where (i) follows from~\eqref{eq: p_diff_leq_a}, and (ii) is due to the definition of $\varsigma_{1,\eps}$ in~\eqref{eq: def_varsigma_h}. Finally, substituting~\eqref{eq: P1_diff_1} and~\eqref{eq: P1_diff_2} in~\eqref{eq: P1_diff_total}, we have
\[
\| P^*_t - \widetilde{P}_t \| \leq \frac{\eps}{8 C_1} + \frac{\eps}{8 C_1} = \frac{\eps}{4 C_1},
\]
thereby establishing~\eqref{eq: P1_req} and concluding the proof of the first claim of Theorem~\ref{thm: LQR_DP_by_delta}. 

Lastly, we need to show if $\eps < \frac{1 - \|A - BK^* \|_*}{\kappa_{P^*}^{1/2} \ \|B\|}$, then $\widetilde{K}_0$ is stabilizing. First, recall that $A-B\widetilde{K}_0$ is stable if $\| A-B\widetilde{K}_0\|_* < 1$. Now note that we can write
\begin{align*}
    \| A-B\widetilde{K}_0\|_* &= \| A-B K^*\|_* + \| B (K^*-\widetilde{K}_0)\|_* \\
    &= \| A-B K^*\|_* + \left\| P^{*^{1/2}} B (K^*-\widetilde{K}_0) P^{*^{-1/2}}\right\| \\
    &\leq \| A-B K^*\|_* + \|P^{*^{1/2}}\| \|P^{*^{-1/2}}\| \|B\| \|K^* - \widetilde{K}_0 \| \\
    &\leq \| A-B K^*\|_* + \kappa_{P^*}^{1/2} \ \|B\| \eps \\
    &<1,
\end{align*}
which verifies the second claim, concluding the proof of Theorem~\ref{thm: LQR_DP_by_delta}.
\oprocend

\section{Proof of Theorem~\ref{thm: LQR_DP}} \label{app: thm_LQR_DP} 
We follow~\cite[App.~C]{XZ-TB:23} up to the recursion that controls the propagated error in the backward pass. First, select the horizon $N$ exactly as in Theorem~\ref{thm: RDE converges to ARE}, which ensures $\|K_0^*-K^*\|\le \eps/2$. Then, repeating their bookkeeping for the outer-loop error terms yields the familiar requirement on the last-stage inner-loop error
\begin{equation}
\label{eq:req-last}
\|e_{N-1}\| \;\lesssim\; \mathcal{O}\left( \sqrt{\frac{\eps}{\,C_1\,C_2^{\,N-2}\,C_3\,}} \right),
\end{equation}
which is precisely the counterpart of eq.~(C.19) in~\cite{XZ-TB:23}.

Substituting the choice of $N$ from~\eqref{eq: N value choice} into \eqref{eq:req-last} gives
\[
C_2^{\,N-2} \approx C_2^{\,\frac{\log(1/\eps)}{2\log(1/\|A^*_K\|_*)}}
= (1/\eps)^{\frac{\log C_2}{2\log(1/\|A^*_K\|_*)}},
\]
and hence
\begin{equation}
\label{eq:true-scaling}
\|e_{N-1}\| \sim \mathcal{O} \left(\eps^{\frac{1}{2}+\frac{\log C_2}{4\log(1/\|A^*_K\|_*)}}\right).
\end{equation}
In~\cite{XZ-TB:23}, this exponent is (incorrectly) simplified to $3/4$, i.e., as if $\frac{\log C_2}{4\log(1/\|A^*_K\|_*)}=\frac{1}{4}$ held universally; however, this does not hold, since this ratio is system-dependent in general. The same substitution appears explicitly in\cite[(C.4) and (C.19)]{XZ-TB:23}. Indeed, this error  propagates to their final sample-complexity. Since the inner-loop PG (two-point estimation) needs $\widetilde{\mathcal{O}}(\|e_h\|^{-2})$ steps to reach the required accuracy at stage $h$, the overall iteration budget must scale as
\begin{align*}
&\widetilde{\mathcal{O}} \left(\max\big\{\|e_0\|^{-2}, \|e_{N-1}\|^{-2}\big\}\right) \\ &\hspace{5em} =\widetilde{\mathcal{O}}\!\left(\max\!\left\{\eps^{-2},\,\eps^{-1-\frac{\log C_2}{2\log(1/\|A^*_K\|_*)}}\right\}\right),
\end{align*}
rather than $\widetilde{\mathcal{O}}(\eps^{-2})$ obtained by substituting $\|e_{N-1}\|=\mathcal{O}(\eps^{3/4})$.

\textbf{Our fix.} Our outer-loop analysis avoids the exponential factor $C_2^{N-2}$ entirely by exploiting that the Riccati operator $\mathcal{R}(\cdot)$ is nonexpansive (contractive) in the Riemannian distance $\delta(\cdot,\cdot)$ under Assumption~2.1 (Lemma~2.1). This yields stage-wise requirements that do not blow up with $N$, recovering the $\widetilde{\mathcal{O}}(\eps^{-2})$ total rate with a one-point model-free gradient estimator.
\end{document}